\numberwithin{algorithm}{section}
\newcommand\BibTeX{{\rmfamily B\kern-.05em \textsc{i\kern-.025em b}\kern-.08em
T\kern-.1667em\lower.7ex\hbox{E}\kern-.125emX}}
\newtheorem{thm}{Theorem}[section]
\newtheorem{lem}{Lemma}[section]
\newtheorem{definition}{Definition}[section]
\newtheorem{cor}{Corollary}[section]
\newtheorem{rem}{Remark}[section]
\numberwithin{equation}{section}
\renewcommand{\theequation}{\thesection.\arabic{equation}}
\newcommand\numberthis{\addtocounter{equation}{1}\tag{\theequation}}
\def\simgt{\,\hbox{\lower0.6ex\hbox{$>$}\llap{\raise0.3ex\hbox{$\sim$}}}\,}
\def\simlt{\,\hbox{\lower0.6ex\hbox{$<$}\llap{\raise0.3ex\hbox{$\sim$}}}\,}
\def\simgteq{\,\hbox{\lower0.6ex\hbox{$\ge$}\llap{\raise0.6ex\hbox{$\sim$}}}\,}
\def\simlteq{\,\hbox{\lower0.6ex\hbox{$\le$}\llap{\raise0.6ex\hbox{$\sim$}}}\,}
\def\user@resume{resume}
\def\user@intermezzo{intermezzo}
\newcounter{previousequation}
\newcounter{lastsubequation}
\newcounter{savedparentequation}
\renewenvironment{subequations}[1][]{%
      \def\user@decides{#1}%
      \setcounter{previousequation}{\value{equation}}%
      \ifx\user@decides\user@resume
           \setcounter{equation}{\value{savedparentequation}}%
      \else
      \ifx\user@decides\user@intermezzo
           \refstepcounter{equation}%
      \else
           \setcounter{lastsubequation}{0}%
           \refstepcounter{equation}%
      \fi\fi
      \protected@edef\theHparentequation{%
          \@ifundefined {theHequation}\theequation \theHequation}%
      \protected@edef\theparentequation{\theequation}%
      \setcounter{parentequation}{\value{equation}}%
      \ifx\user@decides\user@resume
           \setcounter{equation}{\value{lastsubequation}}%
         \else
           \setcounter{equation}{0}%
      \fi
      \def\theequation  {\theparentequation  \alph{equation}}%
      \def\theHequation {\theHparentequation \alph{equation}}%
      \ignorespaces
}{%
  \ifx\user@decides\user@resume
       \setcounter{lastsubequation}{\value{equation}}%
       \setcounter{equation}{\value{previousequation}}%
  \else
  \ifx\user@decides\user@intermezzo
       \setcounter{equation}{\value{parentequation}}%
  \else
       \setcounter{lastsubequation}{\value{equation}}%
       \setcounter{savedparentequation}{\value{parentequation}}%
       \setcounter{equation}{\value{parentequation}}%
  \fi\fi
  \ignorespacesafterend
}
\begin{document}

\runningheads{Kareem T. Elgindy}{High-Order Numerical Solution of the Telegraph Equation}

\title{High-Order Numerical Solution of Second-Order One-Dimensional Hyperbolic Telegraph Equation Using a Shifted Gegenbauer Pseudospectral Method}

\author{Kareem T. Elgindy\corrauth}

\address{Mathematics Department, Faculty of Science, Assiut University, Assiut 71516, Egypt}

\corraddr{Mathematics Department, Faculty of Science, Assiut University, Assiut 71516, Egypt}

\begin{abstract}
We present a high-order shifted Gegenbauer pseudospectral method (SGPM) to solve numerically the second-order one-dimensional hyperbolic telegraph equation provided with some initial and Dirichlet boundary conditions. The framework of the numerical scheme involves the recast of the problem into its integral formulation followed by its discretization into a system of well-conditioned linear algebraic equations. The integral operators are numerically approximated using some novel shifted Gegenbauer operational matrices of integration. We derive the error formula of the associated numerical quadratures. We also present a method to optimize the constructed operational matrix of integration by minimizing the associated quadrature error in some optimality sense. We study the error bounds and convergence of the optimal shifted Gegenbauer operational matrix of integration. Moreover, we construct the relation between the operational matrices of integration of the shifted Gegenbauer polynomials and standard Gegenbauer polynomials. We derive the global collocation matrix of the SGPM, and construct an efficient computational algorithm for the solution of the collocation equations. We present a study on the computational cost of the developed computational algorithm, and a rigorous convergence and error analysis of the introduced method. Four numerical test examples have been carried out in order to verify the effectiveness, the accuracy, and the exponential convergence of the method. The SGPM is a robust technique, which can be extended to solve a wide range of problems arising in numerous applications.
\end{abstract}

\keywords{Integration matrix; Partial differential equations; Pseudospectral method; Shifted Chebyshev polynomials; Shifted Gegenbauer-Gauss nodes; Shifted Gegenbauer polynomials; Telegraph equation}

\maketitle

\vspace{-6pt}

\section{Introduction}
\label{int}
\vspace{-2pt}
Second-order hyperbolic partial differential equations (PDEs) have been studied for many decades, as they frequently arise in many applications like seismology, acoustics, general relativity, oceanography, electromagnetics, electrodynamics, thermoelasticity, thermodynamics of thermal waves, fluid dynamics, reaction-diffusion processes, materials science, geophysics, biological systems, ecology, and a host of other important areas; cf. \cite{Guddati1999,Kreiss2002,Ramos2007,Mattsson2009,Ashyralyev2010}. The range and significance of their applications manifest the demand for achieving higher-order numerical approximations using robust and efficient numerical schemes. In the present work, we establish a high-order numerical approximation to the solution of the following second-order one-dimensional hyperbolic telegraph equation:
\begin{subequations}
\begin{equation}\label{int:eq:teleg1}
{u_{tt}} + {\beta _1}\,{u_t} + {\beta _2}\,u = {u_{xx}} + f(x,t),\quad 0 < x < l,\quad 0 < t < \tau ,
\end{equation}
\text{provided with the initial conditions given by}
\begin{align}
	u(x,0) &= {g_1}(x),\quad 0 < x < l,\label{int:eq:teleg12}\\
	{u_t}(x,0) &= {g_2}(x),\quad 0 < x < l\label{int:eq:teleg122},
\end{align}
\text{and the following Dirichlet boundary conditions}
\begin{align}
	u(0,t) &= {h_1}(t),\quad 0 < t \le \tau,\label{int:dbc1}\\
	u(l,t) &= {h_2}(t),\quad 0 < t \le \tau,\label{int:dbc2}
\end{align}
\end{subequations}
where $u$ is the unknown solution function, $f$ is a given integrable function; $g_1, g_2, h_1$, and $h_2$ are some given functions; $\beta_1$ and $\beta_2$ are some known constant coefficients. We shall refer to Equation \eqref{int:eq:teleg1} provided with Conditions \eqref{int:eq:teleg12} - \eqref{int:dbc2} by Problem $\mathcal{P}$. In fact, the telegraph equation \eqref{int:eq:teleg1} models an infinitesimal piece of a telegraph wire as an electrical circuit, and it describes the voltage and current in a double conductor with distance $x$ and time $t$ \citep{Pandit2015}. The telegraph equation is in particular important as it is commonly used in the study and modeling of signal analysis for transmission and propagation of electrical signals in a cable transmission line \cite{Aloy2007,Sari2014}, and in reaction diffusion occurring in many branches of sciences \cite{Abdusalam2004,Ahmed2001}.

The numerical solution of second order hyperbolic PDEs has been studied extensively by a variety of techniques such as the finite element methods \cite{Bales1984,Bales1986}, finite-difference schemes \cite{Day1966,Jovanovic1987,Mohanty2005,Ramos2007}, combined finite difference scheme and Haar wavelets \cite{Pandit2015}, discrete eigenfunctions method \cite{Aloy2007}, Legendre multiwavelet approximations \cite{Yousefi2010}, the singular dynamic method \cite{Renard2010}, interpolating scaling functions \cite{Lakestani2010}, cubic and quartic B-spline collocation methods \cite{Dosti2012,Mittal2013}, non-polynomial spline methods \cite{Ding2012}, the reduced differential transform method \cite{Srivastava2013}, etc. In the present work, we present a shifted Gegenbauer pseudospectral method (SGPM) for the solution of Problem $\mathcal{P}$. The numerical scheme exploits the stability and the well-conditioning of the numerical integral operators, and collocates the integral formulation of Problem $\mathcal{P}$ in the physical (nodal) space using some novel operational matrices of integration (also called integration matrices) based on shifted Gegenbauer polynomials. The proposed method leads to well-conditioned linear system of algebraic equations, which can be solved efficiently using standard linear system solvers. The rapid convergence, economy in calculations, memory minimization, and the simplicity in programming and application are some of the features enjoyed by the present method. The current work is an extension to the works of \citet{Elgindy2013} and \citet{Elgindy2013a} to second-order hyperbolic PDEs using shifted Gegenbauer polynomials.

The rest of the article is organized as follows: In Section \ref{sec:pre}, we give some basic preliminaries relevant to Gegenbauer and shifted Gegenbauer polynomials. In Section \ref{sec:ort}, we derive the Lagrange form of the shifted Gegenbauer interpolation at the shifted Gegenbauer-Gauss (SGG) nodes. In Section \ref{sec:theshi}, we derive the shifted Gegenbauer integration matrix and its associated quadrature error formula in Section \ref{subsec:err}. In Section \ref{subsec:opt}, we construct an optimal shifted Gegenbauer integration matrix in some optimality measure, and analyze its associated quadrature error in Section \ref{subsec:erranaotosq}. Section \ref{subsec:errbounds} gives the error bounds of the optimal shifted Gegenbauer quadrature. Section \ref{subsec:rel} presents the relation between the integration matrices of the shifted Gegenbauer polynomials and standard Gegenbauer polynomials. Section \ref{sec:theshi2} introduces the SGPM for the efficient numerical solution of Problem $\mathcal{P}$. Section \ref{subsec:mrhs11} is devoted for the constructions of the global collocation matrix and the right hand side of the collocation equations.
Section \ref{subsec:GAS1} establishes the global approximate interpolant over the whole solution domain. Section \ref{sec:conerr1} is devoted for the study of the convergence and error analysis of the proposed method. Four numerical test examples are studied in Section \ref{sec:numerical} to assess the efficiency and accuracy of the numerical scheme. We provide some concluding remarks in Section \ref{sec:conc} followed by possible future work in Section \ref{sec:fw}. Finally,  Appendix \ref{appendix:comalg11} establishes an efficient computational algorithm for the constructions of the global collocation matrix and the right hand side of the collocation equations.

\vspace{-6pt}
\section{Preliminaries}
\label{sec:pre}
\vspace{-2pt}
In this section, we present some preliminary properties of the Gegenbauer polynomials and the shifted Gegenbauer polynomials defined on one and two dimensions. Moreover, we present the discrete inner product of any two functions for the shifted Gegenbauer approximations.

The Gegenbauer polynomial $C_n^{(\alpha )}(x)$, of degree $n \in \mathbb{Z}^+$, and associated with the parameter $\alpha > -1/2$, is a real-valued function, which appears as an eigensolution to a singular Sturm-Liouville problem in the finite domain $[-1, 1]$ \cite{Szego1975}. It is a Jacobi polynomial, $P_n^{(\alpha, \beta)}$, with $\alpha = \beta$, and can be standardized so that:
\begin{equation}\label{sec:pre:eq:normaliz1}
C_n^{(\alpha )}(x) = \frac{{n! \Gamma (\alpha  + \tfrac{1}{2})}}{{\Gamma (n + \alpha  + \tfrac{1}{2})}}P_n^{(\alpha  - 1/2, \alpha  - 1/2)}(x),\quad n = 0,1,2, \ldots.
\end{equation}
Therefore, we recover the $n$th-degree Chebyshev polynomial of the first kind, $T_n(x)$, and the $n$th-degree Legendre polynomial, $L_n(x)$, for $\alpha = 0$ and $0.5$, respectively. The Gegenbauer polynomials can be generated by the following three-term recurrence equation:
\begin{subequations}
\begin{equation}
(n + 2  \alpha )  C_{n + 1}^{(\alpha )}(x) = 2  (n + \alpha )  x C_n^{(\alpha )}(x) - n  C_{n - 1}^{(\alpha )}(x), \quad n = 1, 2, 3, \ldots,
\end{equation}
starting with the following two equations:
\begin{equation}
C_0^{(\alpha )}(x) = 1,
\end{equation}
\begin{equation}
C_1^{(\alpha )}(x) = x,
\end{equation}
\end{subequations}
or in terms of the hypergeometric functions,
\begin{equation}\label{sec:pre:eq:hyp1}
	C_n^{(\alpha )}(x) = {_2}{F_1}\left( { - n,2\alpha  + n;\alpha  + \frac{1}{2};\frac{{1 - x}}{2}} \right), n = 0, 1, 2, \ldots,
\end{equation}
where $_2{F_1}\left( {a,b;c;x} \right),$ is the first hypergeometric function (Gauss's hypergeometric function), which converges if $c \notin \mathbb{Z^ - } \cup \{ 0\} ,$ for all $\left| x \right| < 1$, or at the endpoints $x = \pm 1$, if ${\text{Re}}[c - a - b] > 0$. The leading coefficients of the Gegenbauer polynomials $C_n^{(\alpha)}(x)$, are denoted by $K_n^{(\alpha)}$, and are given by the following relation:
\begin{equation}\label{sec:pre:eq:lead1}
K_n^{(\alpha )} = {2^{n-1}}\frac{{\Gamma (n + \alpha )\;\Gamma (2\alpha  + 1)}}{{\Gamma (n + 2\alpha )\;\Gamma (\alpha  + 1)}},\quad n = 0, 1, 2, \ldots.
\end{equation}
The weight function for the Gegenbauer polynomials is the even function $w^{(\alpha)}(x) = {(1 - {x^2})^{\alpha - 1/2}}$. The Gegenbauer polynomials form a complete orthogonal basis polynomials in $L_{w^{(\alpha)}}^2[-1, 1]$, and their orthogonality relation is given by the following weighted inner product:
\begin{equation}
\left(C_m^{(\alpha)}, C_n^{(\alpha)}\right)_{w^{(\alpha)}} = \int_{ - 1}^1 {C_m^{(\alpha )}(x)\, C_n^{(\alpha )}(x)\, w^{(\alpha)}(x) dx}  = \left\| {C_n^{(\alpha )}} \right\|_{{w^{(\alpha )}}}^2 {\delta _{m,n}} = {\lambda}_n^{(\alpha )} {\delta _{m,n}},
\end{equation}
 where
\begin{equation}\label{sec:pre:eq:normak1}
{\lambda}_n^{(\alpha )} = \left\| {C_n^{(\alpha )}} \right\|_{{w^{(\alpha )}}}^2 = \frac{{{2^{1 - 2\alpha }}\,\pi\,   \Gamma (n + 2\alpha )}}{{n!\,(n + \alpha )\,{\Gamma ^2}(\alpha )}},
\end{equation}
is the normalization factor, and $\delta _{m,n}$ is the Kronecker delta function. We denote the zeroes of the Gegenbauer polynomial $C_{n+1}^{(\alpha )}(x)$ (also called Gegenbauer-Gauss nodes) by $x_{n,k}^{(\alpha)}, k = 0, \ldots, n$, and denote their set by $S_n^{(\alpha)}$. We also denote their corresponding Christoffel numbers by $\varpi_{n,k}^{(\alpha)}, k = 0, \ldots, n$, and define them by the following relation:
\begin{equation}
	{\left(\varpi _{n,k}^{(\alpha )}\right)^{ - 1}} = \sum\limits_{j = 0}^n {{{\left(\lambda _j^{(\alpha )}\right)}^{ - 1}}\,{{\left(C_j^{(\alpha )}\left(x_{n,k}^{(\alpha )}\right)\right)}^2}} ,\quad k = 0,1,2, \ldots ,n.
\end{equation}
Throughout the paper, we shall refer to the Gegenbauer polynomials by those constrained by standardization (\ref{sec:pre:eq:normaliz1}).

Let $L$ be some positive real number. The shifted Gegenbauer polynomial of degree $n$ on the interval $[0, L]$, is defined by $C_{L,n}^{(\alpha)}(x) = C_n^{(\alpha)}(2 x/L - 1)$. The shifted Gegenbauer polynomials form a complete $L_{w_L^{(\alpha)}}^2[-1, 1]$-orthogonal system with respect to the weight function,
\begin{equation}
	w_L^{(\alpha )}(x) = {(Lx - {x^2})^{\alpha  - 1/2}},
\end{equation}
 and their orthogonality relation is defined by the following weighted inner product:
\begin{equation}
\left(C_{L,m}^{(\alpha)}, C_{L,n}^{(\alpha)}\right)_{w_L^{(\alpha)}} = \int_{ 0}^L {C_{L,m}^{(\alpha )}(x)\, C_{L,n}^{(\alpha )}(x)\, w_L^{(\alpha)}(x) dx}  = \left\| {C_{L,n}^{(\alpha )}} \right\|_{{w_L^{(\alpha )}}}^2 {\delta _{m,n}} = {\lambda}_{L,n}^{(\alpha )} {\delta _{m,n}},
\end{equation}
where
\begin{equation}
{\lambda}_{L,n}^{(\alpha )} = {\left(\frac{L}{2}\right)^{2\alpha }}\lambda _n^{(\alpha )},
\end{equation}
is the normalization factor. For $\alpha = 0$ and $0.5$, we recover the shifted Chebyshev polynomials of the first kind and the shifted Legendre polynomials, respectively. We denote the zeroes of the shifted Gegenbauer polynomial $C_{L,n+1}^{(\alpha )}(x)$ (SGG nodes) by $x_{L,n,k}^{(\alpha)}, k = 0, \ldots, n$, and denote their set by $S_{L,n}^{(\alpha)}$. We also denote their corresponding Christoffel numbers by $\varpi_{L,n,k}^{(\alpha)}, k = 0, \ldots, n$. Clearly
\begin{equation}
	x_{L,n,k}^{(\alpha )} = \frac{L}{2}\left( {x_{n,k}^{(\alpha )} + 1} \right),\quad k = 0, \ldots ,n;
\end{equation}
\begin{equation}
	\varpi _{L,n,k}^{(\alpha )} = {\left( {\frac{L}{2}} \right)^{2\alpha }}\varpi _{n,k}^{(\alpha )},\quad k = 0, \ldots ,n.
\end{equation}
If we denote by $\mathbb{P}{_{n}}$, the space of all polynomials of degree at most $n, n \in \mathbb{Z}^+$, then for any $\phi  \in \mathbb{P}{_{2n + 1}}$,
\begin{align}
\int_0^L {\phi (x)\,w_L^{(\alpha )}(x)\,dx}  &= {\left( {\frac{L}{2}} \right)^{2\alpha }}\int_{ - 1}^1 {\phi \left( {\frac{L}{2}(x + 1)} \right)\,{w^{(\alpha )}}(x)\,dx} = {\left( {\frac{L}{2}} \right)^{2\alpha }}\sum\limits_{j = 0}^n {\varpi _{n,j}^{(\alpha )}\,\phi \left( {\frac{L}{2}\left(x_{n,j}^{(\alpha )} + 1\right)} \right)}\nonumber\\
&= \sum\limits_{j = 0}^n {\varpi _{L,n,j}^{(\alpha )}\,\phi \left( {x_{L,n,j}^{(\alpha )}} \right)},
\end{align}
using the standard Gegenbauer-Gauss quadrature. With the quadrature rule, we can define the discrete inner product $(\cdot, \cdot)_{L,n}$, of any two functions $u(x)$ and $v(x)$ defined on $[0, L]$, for the shifted Gegenbauer approximations as follows:
\begin{equation}
	{(u,v)_{L,n}} = \sum\limits_{j = 0}^n {\varpi _{L,n,j}^{(\alpha )}\,u\left( {x_{L,n,j}^{(\alpha )}} \right)\,v\left( {x_{L,n,j}^{(\alpha )}} \right)}.
\end{equation}
In two dimensions, we can define the bivariate shifted Gegenbauer polynomials by the following definition:
\begin{definition}
Let $\left\{ {C_{L,n}^{(\alpha )}(x)} \right\}_{n = 0}^\infty$, be a sequence of shifted Gegenbauer polynomials on $D_L = [0, L]$. The bivariate shifted Gegenbauer polynomials, $\left\{ {{}_{l,\tau }C_{n,m}^{(\alpha )}(x,t)} \right\}_{n,m = 0}^\infty$, are defined as
\begin{equation}
	{}_{l,\tau }C_{n,m}^{(\alpha )}(x,t) = C_{l,n}^{(\alpha )}(x)\,C_{\tau ,m}^{(\alpha )}(t),\;(x,t) \in D_{l,\tau }^2 = [0,l] \times [0,\tau ],\,l,\tau  \in {\mathbb{R}^ +}.
\end{equation}
\end{definition}
The family $\left\{ {_{l,\tau }C_{n,m}^{(\alpha )}(x,t)} \right\}_{n,m = 0}^\infty$, forms a complete basis for ${L^2}(D_{l,\tau }^2)$. They are orthogonal on ${L^2}(D_{l,\tau }^2)$ with respect to the weight function $w_{l,\tau }^{(\alpha )}(x,t) = w_l^{(\alpha )}(x)\,w_\tau ^{(\alpha )}(t),$ since
\begin{align}
	\int_0^l {\int_0^\tau  {{}_{l,\tau }C_{n,m}^{(\alpha )}(x,t)\,{}_{l,\tau }C_{s,k}^{(\alpha )}(x,t)\,w_{l,\tau }^{(\alpha )}(x,t)\,dx\,dt} }  &= \int_0^l {C_{l,n}^{(\alpha )}(x)\,C_{l,s}^{(\alpha )}(x)\,w_l^{(\alpha )}(x)\,dx}  \cdot \int_0^\tau  {C_{\tau ,m}^{(\alpha )}(t)\,C_{\tau ,k}^{(\alpha )}(t)\,w_\tau ^{(\alpha )}(t)\,dt} \\
	&= \left\{ \begin{array}{l}
{}_{l,\tau }\lambda _{n,m}^{(\alpha )},\quad (n,m) = (s,k),\\
0,\quad {\text{o.w.,}}
\end{array} \right.
\end{align}
where
\begin{equation}
	{}_{l,\tau }\lambda _{n,m}^{(\alpha )} = \lambda _{l,n}^{(\alpha )}{\mkern 1mu} \lambda _{\tau ,m}^{(\alpha )}\,\forall n,m.
\end{equation}

In the next section, we highlight the modal and nodal orthogonal shifted Gegenbauer interpolation, and derive the Lagrange form of the shifted Gegenbauer interpolation at the SGG nodes.

\vspace{-6pt}
\section{Orthogonal Shifted Gegenbauer Interpolation}
\label{sec:ort}
\vspace{-2pt}
The function
\begin{equation}\label{sec:ort:eq:modal}
{P_n}f(x) = \sum\limits_{j = 0}^n {{{\tilde f}_j}\,C_{L,j}^{(\alpha )}(x)} ,
\end{equation}
is the shifted Gegenbauer interpolant of a real function $f$ defined on $[0, L]$, if we compute the coefficients ${{\tilde f}_j}$ so that
\begin{equation}
	{P_n}f({x_k}) = f({x_k}),\quad k = 0, \ldots ,n,
\end{equation}
for some nodes $x_k \in [0, L], k = 0, \ldots ,n$. If we choose the interpolation points $x_k$ to be the SGG nodes, then we can simply compute the discrete coefficients using the discrete inner product created from the Gegenbauer-Gauss quadrature by the following formula:
\begin{equation}\label{sec:ort:eq:sgt}
{{\tilde f}_j} = \frac{{{{({P_n}f,C_{L,j}^{(\alpha )})}_{L,n}}}}{{\left\| {C_{L,j}^{(\alpha )}} \right\|_{w_L^{(\alpha )}}^2}} = \frac{{{{(f,C_{L,j}^{(\alpha )})}_{L,n}}}}{{\left\| {C_{L,j}^{(\alpha )}} \right\|_{w_L^{(\alpha )}}^2}} = \frac{1}{{\lambda _{L,j}^{(\alpha )}}}\sum\limits_{k = 0}^n {\varpi _{L,n,k}^{(\alpha )}\,f_{L,n,k}^{(\alpha )}\,C_{L,j}^{(\alpha )}\left(x_{L,n,k}^{(\alpha )}\right)},\quad j = 0, \ldots ,n,
\end{equation}
where $f_{L,n,k}^{(\alpha )} = f\left(x_{L,n,k}^{(\alpha )}\right)\, \forall k$. Equation \eqref{sec:ort:eq:sgt} gives the discrete shifted Gegenbauer transform. To construct the shifted Gegenbauer integration matrix, we need to represent the
orthogonal shifted Gegenbauer approximation as an interpolant through a set of node values (nodal approximation) instead of the modal approximation given by Equation \eqref{sec:ort:eq:modal}. Substituting Equation \eqref{sec:ort:eq:sgt} into \eqref{sec:ort:eq:modal} yields
\begin{align}
	{P_n}f(x) &= \sum\limits_{j = 0}^n {\frac{1}{{\lambda _{L,j}^{(\alpha )}}}\sum\limits_{k = 0}^n {\varpi _{L,n,k}^{(\alpha )}\,{f_{L,n,k}^{(\alpha)}}\,C_{L,j}^{(\alpha )}\left(x_{L,n,k}^{(\alpha )}\right)} \,C_{L,j}^{(\alpha )}(x)}\nonumber\\
	&=	\sum\limits_{k = 0}^n {\left[ {\varpi _{L,n,k}^{(\alpha )}\sum\limits_{j = 0}^n {{{\left( {\lambda _{L,j}^{(\alpha )}} \right)}^{ - 1}}\,C_{L,j}^{(\alpha )}\left(x_{L,n,k}^{(\alpha )}\right)\,C_{L,j}^{(\alpha )}(x)} } \right]\,} {f_{L,n,k}^{(\alpha)}}.
\end{align}
Hence the Lagrange form of the shifted Gegenbauer interpolation of $f$ at the SGG nodes can be written as:
\begin{equation}\label{sec:ort:eq:Lagint1}
	{P_n}f(x) = \sum\limits_{k = 0}^n {{f_{L,n,k}^{(\alpha)}}\,\mathcal{L} _{L,n,k}^{(\alpha )}(x)} ,
\end{equation}
where $\mathcal{L} _{L,n,k}^{(\alpha )}(x)$, are the Lagrange interpolating polynomials defined by
\begin{equation}\label{sec:ort:eq:Lag1}
	\mathcal{L} _{L,n,k}^{(\alpha )}(x) = \varpi _{L,n,k}^{(\alpha )}\sum\limits_{j = 0}^n {{{\left( {\lambda _{L,j}^{(\alpha )}} \right)}^{ - 1}}\,C_{L,j}^{(\alpha )}\left(x_{L,n,k}^{(\alpha )}\right)\,C_{L,j}^{(\alpha )}(x)} ,\quad k = 0, \ldots ,n.
\end{equation}
\begin{thm}
The functions $\mathcal{L} _{L,n,k}^{(\alpha )}(x), k = 0, \ldots ,n,$ defined by Equation \eqref{sec:ort:eq:Lag1} are the Lagrange interpolating polynomials of the real-valued function $f$ constructed through shifted Gegenbauer interpolation at the SGG nodes.
\end{thm}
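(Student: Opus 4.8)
The plan is to verify that the functions defined in \eqref{sec:ort:eq:Lag1} satisfy the two defining properties of a Lagrange cardinal basis associated with the SGG nodes: (i) each $\mathcal{L}_{L,n,k}^{(\alpha)}$ lies in $\mathbb{P}_n$, and (ii) the cardinality relation $\mathcal{L}_{L,n,k}^{(\alpha)}\bigl(x_{L,n,m}^{(\alpha)}\bigr)=\delta_{k,m}$ holds for all $k,m\in\{0,\dots,n\}$. Property (i) is immediate, since each $\mathcal{L}_{L,n,k}^{(\alpha)}$ is by construction a finite linear combination of the shifted Gegenbauer polynomials $C_{L,j}^{(\alpha)}$ with $0\le j\le n$, every one of which belongs to $\mathbb{P}_n$. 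Once both properties are in hand, substituting $x=x_{L,n,m}^{(\alpha)}$ in the representation \eqref{sec:ort:eq:Lagint1} yields $P_nf\bigl(x_{L,n,m}^{(\alpha)}\bigr)=f_{L,n,m}^{(\alpha)}=f\bigl(x_{L,n,m}^{(\alpha)}\bigr)$, so $P_nf\in\mathbb{P}_n$ interpolates $f$ at the $n+1$ distinct SGG nodes; uniqueness of the interpolant in $\mathbb{P}_n$ then identifies the $\mathcal{L}_{L,n,k}^{(\alpha)}$ as the sought Lagrange interpolating polynomials.

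The crux is property (ii), which I would reduce to a discrete orthogonality statement. Introduce the $(n+1)\times(n+1)$ matrix $A=(A_{kj})$ with entries $A_{kj}=\sqrt{\varpi_{L,n,k}^{(\alpha)}}\,\bigl(\lambda_{L,j}^{(\alpha)}\bigr)^{-1/2}\,C_{L,j}^{(\alpha)}\bigl(x_{L,n,k}^{(\alpha)}\bigr)$, which is well defined because the Christoffel numbers are positive for $\alpha>-1/2$. First I would compute
\[
(A^\top A)_{jl}=\bigl(\lambda_{L,j}^{(\alpha)}\bigr)^{-1/2}\bigl(\lambda_{L,l}^{(\alpha)}\bigr)^{-1/2}\sum_{k=0}^{n}\varpi_{L,n,k}^{(\alpha)}\,C_{L,j}^{(\alpha)}\bigl(x_{L,n,k}^{(\alpha)}\bigr)\,C_{L,l}^{(\alpha)}\bigl(x_{L,n,k}^{(\alpha)}\bigr).
\]
Since $C_{L,j}^{(\alpha)}C_{L,l}^{(\alpha)}\in\mathbb{P}_{2n}\subset\mathbb{P}_{2n+1}$, the shifted Gegenbauer–Gauss quadrature rule stated in Section~\ref{sec:pre} is exact on it, so the inner sum equals $\int_0^L C_{L,j}^{(\alpha)}(x)\,C_{L,l}^{(\alpha)}(x)\,w_L^{(\alpha)}(x)\,dx=\lambda_{L,j}^{(\alpha)}\delta_{j,l}$, giving $A^\top A=I$.

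The key observation is that $A$ is square, so $A^\top A=I$ forces $AA^\top=I$ as well. Reading off the $(k,m)$ entry of $AA^\top=I$ yields exactly
\[
\varpi_{L,n,k}^{(\alpha)}\sum_{j=0}^{n}\bigl(\lambda_{L,j}^{(\alpha)}\bigr)^{-1}C_{L,j}^{(\alpha)}\bigl(x_{L,n,k}^{(\alpha)}\bigr)\,C_{L,j}^{(\alpha)}\bigl(x_{L,n,m}^{(\alpha)}\bigr)=\delta_{k,m},
\]
which is precisely $\mathcal{L}_{L,n,k}^{(\alpha)}\bigl(x_{L,n,m}^{(\alpha)}\bigr)=\delta_{k,m}$ after cancelling $\varpi_{L,n,k}^{(\alpha)}\big/\sqrt{\varpi_{L,n,k}^{(\alpha)}\varpi_{L,n,m}^{(\alpha)}}=1$ on the diagonal. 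Equivalently, the diagonal case $k=m$ is nothing but the shifted analogue of the defining relation for the Christoffel numbers given in Section~\ref{sec:pre}, recovered via the substitutions $C_{L,j}^{(\alpha)}\bigl(x_{L,n,k}^{(\alpha)}\bigr)=C_j^{(\alpha)}\bigl(x_{n,k}^{(\alpha)}\bigr)$, $\lambda_{L,j}^{(\alpha)}=(L/2)^{2\alpha}\lambda_j^{(\alpha)}$, and $\varpi_{L,n,k}^{(\alpha)}=(L/2)^{2\alpha}\varpi_{n,k}^{(\alpha)}$.

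I expect the main obstacle to be the off-diagonal vanishing ($k\ne m$), i.e. the fact that the reproducing-kernel sum annihilates distinct SGG nodes; the square-matrix argument above disposes of it cleanly, the only facts consumed being the distinctness of the SGG nodes (the simple zeros of $C_{L,n+1}^{(\alpha)}$) and the degree-$(2n+1)$ exactness of the quadrature. The routine checks I would still confirm are that the products $C_{L,j}^{(\alpha)}C_{L,l}^{(\alpha)}$ genuinely fall within the exactness range, which they do since $2n\le 2n+1$, and that all Christoffel numbers are strictly positive so the square roots defining $A$ are real, which holds throughout for $\alpha>-1/2$.
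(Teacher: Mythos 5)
Your proof is correct, but it follows a genuinely different route from the paper's. The paper proves the cardinality relation $\mathcal{L}_{L,n,k}^{(\alpha)}\bigl(x_{L,n,i}^{(\alpha)}\bigr)=\delta_{i,k}$ analytically: it first reduces the shifted kernel to the unshifted, Szeg\H{o}-standardized one (the $(L/2)^{2\alpha}$ factors cancel), then invokes the Christoffel--Darboux theorem to collapse the sum $\sum_{j=0}^n \bigl(\hat\lambda_j^{(\alpha)}\bigr)^{-1}\hat C_j^{(\alpha)}\bigl(x_{n,k}^{(\alpha)}\bigr)\hat C_j^{(\alpha)}(x)$ into a single term proportional to $\hat C_{n+1}^{(\alpha)}(x)\big/\bigl(x-x_{n,k}^{(\alpha)}\bigr)$, so the off-diagonal vanishing falls out of the zeros of $\hat C_{n+1}^{(\alpha)}$, while the diagonal value $1$ requires L'H\^{o}pital's rule together with a derivative identity quoted from Hesthaven et al. Your argument replaces all of this analytic machinery with a discrete-orthogonality/linear-algebra duality: the symmetrically scaled matrix $A$ satisfies $A^\top A=I$ by the degree-$(2n+1)$ exactness of the quadrature already recorded in the preliminaries, and since $A$ is square this forces $AA^\top=I$, whose entries are exactly the cardinality relations (with your correct side remark that the diagonal case is just the paper's own defining relation for the Christoffel numbers, transported by the shift identities). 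Your route is more elementary and self-contained --- it consumes only quadrature exactness, orthogonality, and positivity of the Christoffel numbers --- whereas the paper's route additionally yields the closed-form expression for the reproducing kernel and pinpoints the mechanism (zeros of $C_{L,n+1}^{(\alpha)}$) behind the off-diagonal vanishing. You also make explicit two steps the paper leaves implicit: that each $\mathcal{L}_{L,n,k}^{(\alpha)}$ lies in $\mathbb{P}_n$, and that the cardinality property plus distinctness of the nodes identifies them as \emph{the} Lagrange basis.
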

\begin{proof}
To show that $\mathcal{L} _{L,n,k}^{(\alpha )}(x), k = 0, \ldots ,n,$ are indeed the Lagrange interpolating polynomials of the real-valued function $f$, we need only to show that $\mathcal{L} _{L,n,k}^{(\alpha )}(x_{L,n,i}^{(\alpha)}) = {\delta _{i,k}}, i, k = 0, \ldots ,n$. Since
\begin{align}
	\mathcal{L} _{L,n,k}^{(\alpha )}(x) &= {\left( {\frac{L}{2}} \right)^{2\alpha }}\,\varpi _{n,k}^{(\alpha )}\sum\limits_{j = 0}^n {{{\left( {{{\left( {\frac{L}{2}} \right)}^{2\alpha }}\lambda _j^{(\alpha )}} \right)}^{ - 1}}\,C_j^{(\alpha )}\left( {x_{n,k}^{(\alpha )}} \right)\,C_j^{(\alpha )}(x)}\nonumber\\
	&= \,\varpi _{n,k}^{(\alpha )}\sum\limits_{j = 0}^n {{{\left( {\lambda _j^{(\alpha )}} \right)}^{ - 1}}\,C_j^{(\alpha )}\left( {x_{n,k}^{(\alpha )}} \right)\,C_j^{(\alpha )}(x)}\nonumber\\
	&= \,\varpi _{n,k}^{(\alpha )}\sum\limits_{j = 0}^n {{{\left( {\lambda _j^{(\alpha )}} \right)}^{ - 1}}\,{{\left( {\frac{{j!\,\Gamma \left( {\alpha  + \frac{1}{2}} \right)}}{{\Gamma \left( {j + \alpha  + \frac{1}{2}} \right)}}} \right)}^2}\hat C_j^{(\alpha )}\left( {x_{n,k}^{(\alpha )}} \right)\,\hat C_j^{(\alpha )}(x)}\nonumber\\
	&= \,\varpi _{n,k}^{(\alpha )}\sum\limits_{j = 0}^n {{{\left( {\hat \lambda _j^{(\alpha )}} \right)}^{ - 1}}\,\hat C_j^{(\alpha )}\left( {x_{n,k}^{(\alpha )}} \right)\,\hat C_j^{(\alpha )}(x)} ,
\end{align}
where ${\hat C_j^{(\alpha )}(x)}$, are the Gegenbauer polynomials standardized by \citet{Szego1975}, and
\begin{equation}
	\hat \lambda _j^{(\alpha )} = \left\| {\hat C_j^{(\alpha )}} \right\|_{{w^{(\alpha )}}}^2 = \frac{{{2^{2\alpha  - 1}}\,{\Gamma ^2}\left( {j + \alpha  + \frac{1}{2}} \right)}}{{(j + \alpha )\,j!\,\Gamma (j + 2\alpha )}}.
\end{equation}
By Christoffel-Darboux Theorem (see \cite[Theorem 4.4]{Hesthaven2007}),
\begin{equation}
	\mathcal{L} _{L,n,k}^{(\alpha )}(x) = \varpi _{n,k}^{(\alpha )}\,\frac{{{2^{ - (2\alpha  + 1)}}\,\Gamma (n + 2)\,{\Gamma ^2}(2\alpha  + 1)}}{{\Gamma (n + 2\alpha  + 1)\,{\Gamma ^2}(\alpha  + 1)}}\,\frac{{\hat C_{n + 1}^{(\alpha )}(x)\,\hat C_n^{(\alpha )}\left(x_{n,k}^{(\alpha )}\right)}}{{x - x_{n,k}^{(\alpha )}}}.
\end{equation}
Hence $\mathcal{L} _{L,n,k}^{(\alpha )}(x_{n,i}^{(\alpha )}) = 0\,\forall i \ne k,$ since $\hat C_{n + 1}^{(\alpha )}(x_{n,i}^{(\alpha )}) = 0,\,\,i = 0, \ldots ,n$. For $i = k$, and using L'H\^{o}pital's rule, we find that
\begin{equation}
	\mathcal{L} _{L,n,k}^{(\alpha )}(x_{n,k}^{(\alpha )}) = \varpi _{n,k}^{(\alpha )}\,\frac{{{2^{ - (2\alpha  + 1)}}\,\Gamma (n + 2)\,{\Gamma ^2}(2\alpha  + 1)}}{{\Gamma (n + 2\alpha  + 1)\,{\Gamma ^2}(\alpha  + 1)}}\,\hat C_n^{(\alpha )}(x_{n,k}^{(\alpha )})\frac{d}{{dx}}\hat C_{n + 1}^{(\alpha )}(x_{n,k}^{(\alpha )}).
\end{equation}
Using Equation (5.19) in \cite{Hesthaven2007}, we can easily show that
\begin{equation}
	\mathcal{L} _{L,n,k}^{(\alpha )}(x_{n,k}^{(\alpha )}) = (n + 2\alpha  + 1)\,\hat C_n^{(\alpha )}(x_{n,k}^{(\alpha )})\,{\left( {\left( {1 - {{\left( {x_{n,k}^{(\alpha )}} \right)}^2}} \right)\,\frac{d}{{dx}}\hat C_{n + 1}^{(\alpha )}(x_{n,k}^{(\alpha )})} \right)^{ - 1}} = 1,\quad k = 0, \ldots, n.
\end{equation}
Hence
\begin{align}
\mathcal{L} _{L,n,k}^{(\alpha )}(x_{L,n,i}^{(\alpha )}) &= \varpi _{L,n,k}^{(\alpha )}\sum\limits_{j = 0}^n {{{\left( {\lambda _{L,j}^{(\alpha )}} \right)}^{ - 1}}{\mkern 1mu} C_{L,j}^{(\alpha )}\left( {x_{L,n,k}^{(\alpha )}} \right){\mkern 1mu} C_{L,j}^{(\alpha )}(x_{L,n,i}^{(\alpha )})}\nonumber\\
&= \varpi _{n,k}^{(\alpha )}\sum\limits_{j = 0}^n {{{\left( {\hat \lambda _j^{(\alpha )}} \right)}^{ - 1}}{\mkern 1mu} \hat C_j^{(\alpha )}\left( {x_{n,k}^{(\alpha )}} \right){\mkern 1mu} \hat C_j^{(\alpha )}(x_{n,i}^{(\alpha )})} = {\delta _{i,k}}. 	
\end{align}
\end{proof}

\vspace{-6pt}
\section{The Shifted Gegenbauer Integration Matrix}
\label{sec:theshi}
\vspace{-2pt}
Suppose that a real-valued function $f$ is approximated by the shifted Gegenbauer interpolant $P_nf$ given by Equation \eqref{sec:ort:eq:Lagint1}. The shifted Gegenbauer integration matrix calculated at the SGG nodes is simply a linear map, ${{{\mathbf{\hat P}}}_L^{(1)}}$, which takes a vector of $(n + 1)$ function values, ${\mathbf{F}} = {\left( {{f_{L,0}},{f_{L,1}}, \ldots ,{f_{L,n}}} \right)^T}$, to a vector of $(n + 1)$ integral values
\[\mathbf{I}_n^{(\alpha)} = {\left( {\int_0^{x_{L,n,0}^{(\alpha )}} {{P_n}f(x)\,dx} ,\int_0^{x_{L,n,1}^{(\alpha )}} {{P_n}f(x)\,dx} , \ldots ,\int_0^{x_{L,n,n}^{(\alpha )}} {{P_n}f(x)\,dx} } \right)^T},\]
such that
\begin{equation}
	\mathbf{I}_n^{(\alpha)} = {{{\mathbf{\hat P}}}_L^{(1)}}\,{\mathbf{F}}.
\end{equation}
The integration matrix ${\mathbf{\hat P}}_L^{(1)} = (\hat p_{L,i,k}^{(1)}),\,0 \le i,k \le n,$ is the first-order square shifted Gegenbauer integration matrix of size $(n + 1)$, and its elements, $\hat p_{L,i,k}^{(1)} = \hat p_{L,i,k}^{(1)}(\alpha),\,0 \le i,k \le n$, can be constructed by integrating Equation \eqref{sec:ort:eq:Lagint1} on $[0, L]$, such that
\begin{align}
	\int_0^{x_{L,n,i}^{(\alpha )}} {{P_n}f(x)\,dx} &= \sum\limits_{k = 0}^n {{f_{L,n,k}^{(\alpha)}}\,\int_0^{x_{L,n,i}^{(\alpha )}} {\mathcal{L} _{L,n,k}^{(\alpha )}(x)} \,dx}\nonumber\\
	&= \sum\limits_{k = 0}^n {\left( {\varpi _{L,n,k}^{(\alpha )}\,\sum\limits_{j = 0}^n {{{\left( {\lambda _{L,j}^{(\alpha )}} \right)}^{ - 1}}\,C_{L,j}^{(\alpha )}(x_{L,n,k}^{(\alpha )})\,\int_0^{x_{L,n,i}^{(\alpha )}} {C_{L,j}^{(\alpha )}(x)} \,dx} } \right)\,{f_{L,n,k}^{(\alpha)}}}\nonumber\\
	 &= \sum\limits_{k = 0}^n {\hat p_{L,i,k}^{(1)}\,{f_{L,n,k}^{(\alpha)}}} \label{sec:theshi:eq:squad1}.
\end{align}
Hence
\begin{equation}\label{sec:theshi:eq:pelem}
	\hat p_{L,i,k}^{(1)} = \varpi _{L,n,k}^{(\alpha )}\,\sum\limits_{j = 0}^n {{{\left( {\lambda _{L,j}^{(\alpha )}} \right)}^{ - 1}}\,C_{L,j}^{(\alpha )}(x_{L,n,k}^{(\alpha )})\,\int_0^{x_{L,n,i}^{(\alpha )}} {C_{L,j}^{(\alpha )}(x)} \,dx} ,\quad i, k = 0, \ldots ,n.
\end{equation}
We refer to the shifted Gegenbauer integration matrix, ${\mathbf{\hat P}}_L^{(1)}$, and its associated quadrature \eqref{sec:theshi:eq:squad1} by the S-matrix and the S-quadrature, respectively.
\subsection{Error Analysis of the S-Quadrature}
\label{subsec:err}
The following theorem highlights the truncation error of the shifted Gegenbauer quadrature associated with the shifted Gegenbauer integration matrix ${\mathbf{\hat P}}_L^{(1)}$.
\begin{thm}\label{subsec:err:thm1}
Let $f(x) \in C^{n + 1}[0, L]$, be interpolated by the shifted Gegenbauer polynomials at the SGG nodes, $x_{L,n,i}^{(\alpha)} \in S_{L,n}^{(\alpha)}$, then there exist a matrix ${\mathbf{\hat P}}_L^{(1)} = ({\hat p_{L,i,j}^{(1)}}),\,0 \le i,j \le n$, and some numbers $\xi_i = \xi(x_{L,n,i}^{(\alpha)}) \in (0, L), i = 0, \ldots, n$, satisfying
\begin{equation}\label{subsec:err:eq:squadki1}
\int_0^{x_{L,n,i}^{(\alpha )}} {f(x)dx}  = \sum\limits_{k = 0}^n {\hat p_{L,i,k}^{(1)}\,{f_{L,n,k}^{(\alpha)}}}  + E_{L,n}^{(\alpha )}\left( {x_{L,n,i}^{(\alpha )},{\xi _i}} \right),
\end{equation}
where $\hat p_{L,i,k}^{(1)}, i, k = 0, \ldots ,n$, are the elements of the matrix ${\mathbf{\hat P}}_L^{(1)}$, defined by Equation \eqref{sec:theshi:eq:pelem}, and
\begin{equation}\label{sec1:eq:errorkimohat}
E_{L,n}^{(\alpha )}\left( {x_{L,n,i}^{(\alpha )},{\xi _i}} \right) = {\left( {\frac{L}{2}} \right)^{n + 1}}\frac{{{f^{(n + 1)}}({\xi _i})}}{{(n + 1)!\,K_{n + 1}^{(\alpha )}}}\int_{ 0}^{x_{L,n,i}^{(\alpha )}} {C_{L,n + 1}^{(\alpha )}(x)\;dx}.
\end{equation}
\end{thm}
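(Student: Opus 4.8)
The plan is to derive the formula from the classical Cauchy remainder for Lagrange interpolation. By the preceding theorem, $P_n f$ in Equation \eqref{sec:ort:eq:Lagint1} is the polynomial of degree $\le n$ that interpolates $f$ at the $n+1$ distinct SGG nodes $x_{L,n,k}^{(\alpha)}$, and since $f \in C^{n+1}[0,L]$ the standard interpolation error formula supplies, for each $x \in [0,L]$, a point $\xi(x) \in (0,L)$ with
\[
f(x) - P_n f(x) = \frac{f^{(n+1)}(\xi(x))}{(n+1)!}\,\omega(x), \qquad \omega(x) = \prod_{k=0}^{n}\bigl(x - x_{L,n,k}^{(\alpha)}\bigr).
\]
First I would record that the coefficient $f^{(n+1)}(\xi(x))/(n+1)!$ is the divided difference $f[x_{L,n,0}^{(\alpha)},\dots,x_{L,n,n}^{(\alpha)},x]$, hence a continuous function of $x$ on $[0,L]$ (with removable singularities at the nodes); this continuity is what will later allow a single value to be extracted from an integral.

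The next step is to identify the node polynomial $\omega$ with a scalar multiple of $C_{L,n+1}^{(\alpha)}$. The nodes $x_{L,n,k}^{(\alpha)}$ are by definition the simple zeros of $C_{L,n+1}^{(\alpha)}$, so $\omega$ and $C_{L,n+1}^{(\alpha)}$ have the same $n+1$ roots and differ only by their leading coefficients. Writing $C_{L,n+1}^{(\alpha)}(x) = C_{n+1}^{(\alpha)}(2x/L-1)$ and using the leading coefficient $K_{n+1}^{(\alpha)}$ of $C_{n+1}^{(\alpha)}$ from Equation \eqref{sec:pre:eq:lead1}, the leading coefficient of $C_{L,n+1}^{(\alpha)}$ in $x$ is $K_{n+1}^{(\alpha)}(2/L)^{n+1}$, so that $\omega(x) = (L/2)^{n+1}\,C_{L,n+1}^{(\alpha)}(x)/K_{n+1}^{(\alpha)}$. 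This computation is the exact origin of the prefactor $(L/2)^{n+1}/K_{n+1}^{(\alpha)}$ appearing in \eqref{sec1:eq:errorkimohat}.

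I would then integrate the remainder identity over $[0, x_{L,n,i}^{(\alpha)}]$. On the left, the construction of the S-matrix entries in Equation \eqref{sec:theshi:eq:squad1} gives $\int_0^{x_{L,n,i}^{(\alpha)}} P_n f(x)\,dx = \sum_{k=0}^{n} \hat p_{L,i,k}^{(1)}\,f_{L,n,k}^{(\alpha)}$, so the integrated left side is precisely $\int_0^{x_{L,n,i}^{(\alpha)}} f\,dx - \sum_{k=0}^{n} \hat p_{L,i,k}^{(1)}\,f_{L,n,k}^{(\alpha)}$, the quantity to be estimated. Substituting the expression for $\omega$ on the right turns the integrated remainder into $(L/2)^{n+1}\,(K_{n+1}^{(\alpha)})^{-1}\,((n+1)!)^{-1}\int_0^{x_{L,n,i}^{(\alpha)}} f^{(n+1)}(\xi(x))\,C_{L,n+1}^{(\alpha)}(x)\,dx$. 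It then remains to pull the continuous factor $f^{(n+1)}(\xi(x))$ out of the integral as a single value $f^{(n+1)}(\xi_i)$, leaving the bare integral $\int_0^{x_{L,n,i}^{(\alpha)}} C_{L,n+1}^{(\alpha)}(x)\,dx$ exhibited in the statement.

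The main obstacle is exactly this final extraction. The sign-definite form of the mean value theorem for integrals does not apply directly, because $C_{L,n+1}^{(\alpha)}$ vanishes at each interior SGG node and therefore changes sign on $(0, x_{L,n,i}^{(\alpha)})$ as soon as $i \ge 1$. I would consequently treat this step with care rather than invoke the positive-kernel version blindly: writing $h(x) = f^{(n+1)}(\xi(x))/(n+1)!$ for the continuous divided-difference factor, the aim is to produce $\xi_i$ with $h(\xi_i) = \big(\int_0^{x_{L,n,i}^{(\alpha)}} h(x)\,C_{L,n+1}^{(\alpha)}(x)\,dx\big)\big/\big(\int_0^{x_{L,n,i}^{(\alpha)}} C_{L,n+1}^{(\alpha)}(x)\,dx\big)$ via the intermediate value theorem, and the genuinely delicate point is verifying that this ratio lies within the range of $h$ (equivalently, of $f^{(n+1)}$ on $[0,L]$), which is not automatic once the kernel is not sign-definite. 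Every remaining step is a routine substitution resting on results already established in the excerpt, so deciding how rigorously to justify the mean value theorem with a sign-changing kernel is the one step I expect to require real attention.
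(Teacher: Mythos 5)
Your outline follows essentially the same route as the paper's proof. The paper does not quote Cauchy's interpolation remainder but re-derives it: it forms the auxiliary function $Y(t) = R_n(t) - \frac{R_n(x)}{C_{L,n+1}^{(\alpha)}(x)}\,C_{L,n+1}^{(\alpha)}(t)$, applies the generalized Rolle theorem to its $n+2$ zeros, and obtains your prefactor $(L/2)^{n+1}/K_{n+1}^{(\alpha)}$ from the identity $\frac{d^{n+1}}{dt^{n+1}}C_{L,n+1}^{(\alpha)}(t) = (2/L)^{n+1}(n+1)!\,K_{n+1}^{(\alpha)}$, which is exactly your leading-coefficient comparison in disguise. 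Both arguments then integrate the pointwise identity over $[0, x_{L,n,i}^{(\alpha)}]$ and use \eqref{sec:theshi:eq:squad1} to recognize $\int_0^{x_{L,n,i}^{(\alpha)}} P_n f(x)\,dx$ as the quadrature sum. These differences are cosmetic.

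The step you single out as the real obstacle is indeed the crux, and you should know that the paper does not resolve it either: in its final display it passes from the integral of the remainder, in which the Rolle point $\xi = \xi(x)$ varies with the integration variable, directly to $f^{(n+1)}\big(\xi(x_{L,n,i}^{(\alpha)})\big)$ standing outside $\int_0^{x_{L,n,i}^{(\alpha)}} C_{L,n+1}^{(\alpha)}(x)\,dx$ --- i.e., it evaluates $\xi$ at the upper limit by fiat. Note that $\xi(x)$ is not even defined at $x = x_{L,n,i}^{(\alpha)}$, since there $C_{L,n+1}^{(\alpha)}(x) = 0$ and the auxiliary-function construction divides by zero. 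Your diagnosis of why this cannot be waved through is also correct: for $i \ge 1$ the kernel $C_{L,n+1}^{(\alpha)}$ changes sign inside $(0, x_{L,n,i}^{(\alpha)})$ (it vanishes at every interior SGG node), so the first mean value theorem for integrals is unavailable, and the ratio $\big(\int_0^{x_{L,n,i}^{(\alpha)}} h\,C_{L,n+1}^{(\alpha)}\,dx\big)\big/\big(\int_0^{x_{L,n,i}^{(\alpha)}} C_{L,n+1}^{(\alpha)}\,dx\big)$ need not a priori lie in the range of the continuous divided-difference factor $h$; only for the smallest node, where the kernel is sign-definite on the integration interval, does the classical argument close. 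So your proposal is incomplete at precisely the point where the published proof is non-rigorous: to genuinely establish the existence of $\xi_i$ for all $i$ one needs an additional argument (for instance a sign analysis or Peano-kernel representation of the functional $\phi \mapsto \int_0^{x_{L,n,i}^{(\alpha)}}(\phi - P_n\phi)\,dx$) that neither your write-up nor the paper supplies.
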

\begin{proof}
Set the error term of the shifted Gegenbauer interpolation as
\begin{equation}
	{R_n}(x) = f(x) - {P_n}f(x),
\end{equation}
and construct the auxiliary function
\begin{equation}
	Y(t) = {R_n}(t) - \frac{{{R_n}(x)}}{{C_{L,n + 1}^{(\alpha )}(x)}}C_{L,n + 1}^{(\alpha )}(t).
\end{equation}
Since $f \in {C^{n + 1}}[0,L]$, and ${P_n}f \in {C^\infty }[0,L],$ it follows that $Y \in {C^{n + 1}}[0,L]$. For $t = x_{L,n,i}^{(\alpha )}$, we have
\begin{equation}
	Y(x_{L,n,i}^{(\alpha )}) = {R_n}(x_{L,n,i}^{(\alpha )}) - \frac{{{R_n}(x)}}{{C_{L,n + 1}^{(\alpha )}(x)}}C_{L,n + 1}^{(\alpha )}(x_{L,n,i}^{(\alpha )}) = 0,
\end{equation}
since $x_{L,n,i}^{(\alpha )}$, are zeroes of ${{R_n}(x)}$. Moreover,
\begin{equation}
	Y(x) = {R_n}(x) - \frac{{{R_n}(x)}}{{C_{L,n + 1}^{(\alpha )}(x)}}C_{L,n + 1}^{(\alpha )}(x) = 0.
\end{equation}
Thus  $Y \in {C^{n + 1}}[0,L],$ and $Y$ is zero at the $(n + 2)$ distinct nodes $x, x_{L,n,i}^{(\alpha )}, i = 0, \ldots, n$. By the generalized Rolle's Theorem, there exists a number $\xi$ in $(0, L)$ such that ${Y^{(n + 1)}}(\xi ) = 0$. Therefore,
\begin{align}
	0 &= {Y^{(n + 1)}}(\xi ) = R_n^{(n + 1)}(\xi ) - \frac{{{R_n}(x)}}{{C_{L,n + 1}^{(\alpha )}(x)}}\frac{{{d^{n + 1}}}}{{d{t^{n + 1}}}}{\left. {C_{L,n + 1}^{(\alpha )}(t)} \right|_{t = \xi }}\nonumber\\
	&= R_n^{(n + 1)}(\xi ) - \frac{{{R_n}(x)}}{{C_{L,n + 1}^{(\alpha )}(x)}}{\left( {\frac{2}{L}} \right)^{n + 1}}\frac{{{d^{n + 1}}}}{{d{t^{n + 1}}}}{\left. {C_{n + 1}^{(\alpha )}(t)} \right|_{t = \xi }}\nonumber\\
	&= R_n^{(n + 1)}(\xi ) - {\left( {\frac{2}{L}} \right)^{n + 1}}\,(n + 1)!\,K_{n + 1}^{(\alpha )}\frac{{{R_n}(x)}}{{C_{L,n + 1}^{(\alpha )}(x)}}.
\end{align}
Since ${P_n}f \in {\mathbb{P}_n},\,{({P_n}f)^{(n + 1)}}(x)$ is identically zero, and we have
\begin{equation}
	0 = {f^{(n + 1)}}(\xi ) - {\left( {\frac{2}{L}} \right)^{n + 1}}\,(n + 1)!\,K_{n + 1}^{(\alpha )}\frac{{{R_n}(x)}}{{C_{L,n + 1}^{(\alpha )}(x)}}.
\end{equation}
\begin{equation}
	\therefore f(x) = {P_n}f(x) + {\left( {\frac{L}{2}} \right)^{n + 1}}\frac{{{f^{(n + 1)}}(\xi )}}{{(n + 1)!\,K_{n + 1}^{(\alpha )}}}\,C_{L,n + 1}^{(\alpha )}(x).
\end{equation}
\begin{align}
	\Rightarrow \int_0^{x_{L,n,i}^{(\alpha )}} {f(x)\,dx}  &= \int_0^{x_{L,n,i}^{(\alpha )}} {{P_n}f(x)\,dx}  + {\left( {\frac{L}{2}} \right)^{n + 1}}\frac{{{f^{(n + 1)}}(\xi (x_{L,n,i}^{(\alpha )}))}}{{(n + 1)!\,K_{n + 1}^{(\alpha )}}}\,\int_0^{x_{L,n,i}^{(\alpha )}} {C_{L,n + 1}^{(\alpha )}(x)\,dx}\\
	&= \sum\limits_{k = 0}^n {\hat p_{L,i,k}^{(1)}{f_{L,n,k}^{(\alpha)}}}  + E_{L,n}^{(\alpha )}\left( {x_{L,n,i}^{(\alpha )},{\xi _i}} \right).
\end{align}
\end{proof}
\subsection{Optimal S-Quadrature}
\label{subsec:opt}
To construct an optimal S-quadrature to approximate the definite integration $\int_0^{{x_i}} {f(x)\,dx}$, of an integrable function $f$, for any arbitrary integration node $x_i \in [0, L]$, we follow the idea presented by \citet{Elgindy2013}, and seek to determine the optimal Gegenbauer parameter $\alpha_i^*$, which minimizes the magnitude of the quadrature error $E_{L,n}^{(\alpha )}(x_i,\xi )$, at each integration node $x_i, i = 0, \ldots, n$. Define the smooth function $\eta _{L,i,n} {(\alpha )}$, such that
\begin{equation}
	\eta _{L,i,n} {(\alpha )} = \frac{{{2^n}}}{{K_{n + 1}^{(\alpha )}}}\int_0^{{x_i}} {C_{L, n + 1}^{(\alpha )}(x)\,dx}.
\end{equation}
The values of the optimal Gegenbauer parameters $\alpha_i^*$, can be determined through the following one-dimensional optimization problems:
\begin{equation}\label{subsec:opt:reducedprob1}
{\text{Find }}\alpha _i^* = \mathop {{\text{argmin}}}\limits_{\alpha  >  - 1/2} \eta _{L,i,n}^2(\alpha ),\quad i = 0, \ldots, n.
\end{equation}
Problems (\ref{subsec:opt:reducedprob1}) can be further converted into unconstrained one-dimensional minimization problems using the following change of variable:
\begin{equation}\label{sec:theshi:eq:change1}
	\alpha  = {t^2} - \frac{1}{2} + \varepsilon,\quad 0 < \varepsilon  < \frac{1}{2}(1 + 2\,\alpha ) \ll 1.
\end{equation}
We refer to the optimal shifted Gegenbauer integration matrix and its associated quadrature established through the solution of Problems \eqref{subsec:opt:reducedprob1} by the optimal S-matrix and the optimal S-quadrature, respectively. Notice here that for each integration node $x_i$, an optimal Gegenbauer parameter $\alpha_i^*$ is determined, and the optimal S-quadrature seeks a new set of SGG nodes as the optimal shifted Gegenbauer interpolation nodes set corresponding to the integration node $x_i$. We denote these optimal SGG interpolation nodes by $z_{L,m,i,k}^{(\alpha_i^*)}, k = 0, \ldots, m,$ for some $m \in \mathbb{Z}^+$, and we call them the adjoint SGG nodes, since their role is similar to the role of the adjoint Gegenbauer-Gauss nodes $z_{m,i,k}^{(\alpha_i^*)}, k = 0, \ldots, m,$ constructed in \cite{Elgindy2013}. Notice also that the choice of the positive integer number $m$ is free, which renders the optimal S-matrix a rectangular matrix of size $(n + 1) \times (m + 1)$ rather than a square matrix of size $(n + 1)$, as is typically the case with the standard S-matrix. Denote the optimal S-matrix by $P_L^{(1)} = (p_{L,i,k}^{(1)}), i = 0, \ldots, n; k = 0, \ldots, m$, where $p_{L,i,k}^{(1)} = p_{L,i,k}^{(1)}(\alpha_i^*)$, are the matrix elements of the $i$th row obtained using the optimal value $\alpha_i^*$. The definite integral $\int_{ 0}^{{x_i}} {f(x)\,dx}$, is then approximated by the optimal S-quadrature as follows:
\begin{equation}\label{subsec:opt:squad1}
	\int_{ 0}^{{x_i}} {f(x)\, dx}  \approx \sum\limits_{k = 0}^m {{p_{L,i,k}^{(1)}}\, f_{L,m,i,k}^{(\alpha_i^*)}},\quad i = 0, \ldots ,n,
\end{equation}
where $f_{L,m,i,k}^{(\alpha_i^*)} = f({z_{L,m,i,k}^{(\alpha_i^*)}}), \quad i = 0, \ldots ,n; k = 0, \ldots, m$.
\subsection{Error Analysis of the Optimal S-Quadrature}
\label{subsec:erranaotosq}
The following theorem describes the construction of the optimal S-matrix elements, and highlights the truncation error of the associated optimal S-quadrature.
\begin{thm}\label{subsec1:theoremnew}
\label{subsec1:kimobasha1}
Let
\begin{equation}\label{sec1:eq:ultragauss1}
    S_{L,n,m} = \{ {z_{L,m,i,k}^{(\alpha_i^*)}}|C_{L,m + 1}^{(\alpha_i^* )}({z_{L,m,i,k}^{(\alpha_i^*)}}) = 0, i = 0, \ldots ,n; k = 0, \ldots ,m\},\quad L \in \mathbb{R}^+, n, m \in \mathbb{Z}^+,
\end{equation}
be the adjoint set of SGG nodes, where $\alpha _i^*$ are the optimal Gegenbauer parameters in the sense that
\begin{equation}\label{sec1:eq:optgegepar1}
\alpha _i^* = \mathop {{\text{argmin}}}\limits_{\alpha  >  - 1/2}  \eta _{L,i,m}^2(\alpha ).
\end{equation}
Moreover, let $f(x) \in {C^{m+1} }[0, L]$, be a real-valued function approximated by the shifted Gegenbauer polynomials expansion series such that the shifted Gegenbauer coefficients are computed by interpolating the function $f(x)$ at the adjoint SGG nodes $z_{L,m,i,k}^{(\alpha_i^*)} \in S_{L,n,m}, i = 0, \ldots ,n; k = 0, \ldots ,m$. Then for any arbitrary integration nodes $x_i \in [0, L], i = 0, \ldots, n$, there exist a matrix $\mathbf{P}_L^{(1)} = ({p_{L,i,j}^{(1)}}), i = 0,\ldots, n; j = 0, \ldots ,m$, and some numbers $\xi_i \in [0, L]$, satisfying
\begin{equation}\label{sec1:eq:ultraint2kimo}
\int_{ 0}^{{x_i}} {f(x)\, dx}  = \sum\limits_{k = 0}^m {{p_{L,i,k}^{(1)}}\, f_{L,m,i,k}^{(\alpha_i^*)}}  + E_{L,m}^{(\alpha_i^* )}({x_i},\xi_i),
\end{equation}
where
\begin{equation}\label{sec1:eq:qentrieskimo}
{p_{L,i,k}^{(1)}} = \varpi _{L,m,k}^{(\alpha_i^* )}\, \sum\limits_{j = 0}^m {{{\left(\lambda _{L,j}^{(\alpha_i^* )}\right)}^{ - 1}} C_{L,j}^{(\alpha_i^* )}({z_{L,m,i,k}^{(\alpha_i^*)}})\int_{ 0}^{{x_i}} {C_{L,j}^{(\alpha_i^* )}(x)\, dx} };
\end{equation}
\begin{equation}\label{sec1:eq:errorkimo}
E_{L,m}^{(\alpha _i^*)}({x_i},{\xi _i}) = {\left( {\frac{L}{2}} \right)^{m + 1}}\frac{{{f^{(m + 1)}}({\xi _i})}}{{{2^m}\,(m + 1)!}}{\mkern 1mu} {\eta _{L,i,m}}(\alpha _i^*).
\end{equation}
\end{thm}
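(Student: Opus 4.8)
The plan is to observe that this theorem is a row-by-row adaptation of Theorem \ref{subsec:err:thm1}, carried out with the optimal parameter $\alpha_i^*$ in place of a fixed $\alpha$, with the interpolation degree $n$ replaced by $m$, and with the interpolation performed at the adjoint SGG nodes $z_{L,m,i,k}^{(\alpha_i^*)}$ (the zeros of $C_{L,m+1}^{(\alpha_i^*)}$, by \eqref{sec1:eq:ultragauss1}) rather than the standard SGG nodes. The optimality condition \eqref{sec1:eq:optgegepar1} will play no part in establishing the identity itself; it only guarantees that the resulting error magnitude is minimized over the admissible range of $\alpha$.

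First I would fix an arbitrary integration node $x_i$ together with its associated optimal parameter $\alpha_i^*$, and repeat the auxiliary-function construction of the proof of Theorem \ref{subsec:err:thm1} verbatim, now building the auxiliary function from $C_{L,m+1}^{(\alpha_i^*)}$ and using the adjoint nodes as the interpolation points. Since $f \in C^{m+1}[0,L]$ and the remainder $R_m = f - P_m f$ vanishes at the $m+1$ adjoint nodes, the auxiliary function has $m+2$ distinct zeros, so the generalized Rolle's Theorem supplies a number $\xi \in (0,L)$ with the pointwise identity
\[
f(x) = P_m f(x) + \left(\frac{L}{2}\right)^{m+1}\frac{f^{(m+1)}(\xi)}{(m+1)!\,K_{m+1}^{(\alpha_i^*)}}\,C_{L,m+1}^{(\alpha_i^*)}(x).
\]
The only ingredient that changes is the leading-coefficient identity $\frac{d^{m+1}}{dt^{m+1}}C_{L,m+1}^{(\alpha_i^*)}(t) = (2/L)^{m+1}(m+1)!\,K_{m+1}^{(\alpha_i^*)}$, which holds for any admissible parameter because $C_{m+1}^{(\alpha_i^*)}$ has leading coefficient $K_{m+1}^{(\alpha_i^*)}$.

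Next I would integrate this identity over $[0,x_i]$. Expanding $P_m f$ in the Lagrange form \eqref{sec:ort:eq:Lagint1} built on the adjoint nodes and integrating term by term reproduces, through exactly the computation that led to \eqref{sec:theshi:eq:pelem}, the quadrature sum $\sum_{k=0}^m p_{L,i,k}^{(1)}\,f_{L,m,i,k}^{(\alpha_i^*)}$ with entries given by \eqref{sec1:eq:qentrieskimo}. Integrating the remainder term produces $\left(\frac{L}{2}\right)^{m+1}\frac{f^{(m+1)}(\xi_i)}{(m+1)!\,K_{m+1}^{(\alpha_i^*)}}\int_0^{x_i}C_{L,m+1}^{(\alpha_i^*)}(x)\,dx$, where $\xi_i = \xi(x_i)$. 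Note that the upper integration limit being an arbitrary $x_i$ rather than a quadrature node is harmless, since that limit enters only as the integration range and not in the pointwise error argument.

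The final step is to rewrite this remainder using the definition of $\eta_{L,i,m}$. Since $\eta_{L,i,m}(\alpha_i^*) = \frac{2^m}{K_{m+1}^{(\alpha_i^*)}}\int_0^{x_i}C_{L,m+1}^{(\alpha_i^*)}(x)\,dx$, one has $\frac{1}{K_{m+1}^{(\alpha_i^*)}}\int_0^{x_i}C_{L,m+1}^{(\alpha_i^*)}(x)\,dx = 2^{-m}\,\eta_{L,i,m}(\alpha_i^*)$, and substituting yields exactly \eqref{sec1:eq:errorkimo}. There is no genuine obstacle in this proof; it is essentially bookkeeping on top of Theorem \ref{subsec:err:thm1}. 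The one point that requires care is that every quantity here—the parameter $\alpha_i^*$, the adjoint node set, and the number $\xi_i$—depends on the row index $i$, so the derivation must be read as holding independently for each $i = 0,\ldots,n$.
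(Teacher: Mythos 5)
Your proposal is correct and follows essentially the same route as the paper's own proof: the paper likewise forms the Lagrange interpolant of $f$ at the adjoint SGG nodes, integrates it term by term over $[0,x_i]$ to obtain the entries \eqref{sec1:eq:qentrieskimo}, and obtains the error term by invoking Theorem \ref{subsec:err:thm1} with $\alpha$ replaced by $\alpha_i^*$ and the expansion carried to the $(m+1)$th term---precisely the substitution you carry out. The only cosmetic difference is that you unpack that citation into an explicit re-run of the auxiliary-function/Rolle argument and spell out the rewriting of the integrated remainder in terms of $\eta_{L,i,m}(\alpha_i^*)$, steps the paper leaves implicit.
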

\begin{proof}
The function
\begin{equation}\label{sec:ort:eq:modal2}
{P_m}f(x) = \sum\limits_{j = 0}^m {{{\tilde f}_{i,j}}\,C_{L,j}^{(\alpha_i^* )}(x)} ,
\end{equation}
is the shifted Gegenbauer interpolant of the real function $f$ defined on $[0, L]$, if we compute the coefficients ${{\tilde f}_{i,j}}$ so that
\begin{equation}
	{P_m}f({z_{L,m,i,k}^{(\alpha_i^*)}}) = f_{L,m,i,k}^{(\alpha_i^*)},\quad i = 0, \ldots ,n; k = 0, \ldots, m.
\end{equation}
Hence the discrete shifted Gegenbauer transform is
\begin{equation}
	{\tilde f_{i,k}} = \frac{{{{({P_m}f,C_{L,j}^{(\alpha _i^*)})}_{L,m}}}}{{\left\| {C_{L,j}^{(\alpha _i^*)}} \right\|_{w_L^{(\alpha )}}^2}} = \frac{{{{(f,C_{L,j}^{(\alpha _i^*)})}_{L,m}}}}{{\left\| {C_{L,j}^{(\alpha _i^*)}} \right\|_{w_L^{(\alpha )}}^2}} = \frac{1}{{\lambda _{L,j}^{(\alpha _i^*)}}}\sum\limits_{k = 0}^m {\varpi _{L,m,k}^{(\alpha _i^*)}{\mkern 1mu} f_{L,m,i,k}^{(\alpha _i^*)}{\mkern 1mu} C_{L,j}^{(\alpha _i^*)}\left( {z_{L,m,i,k}^{(\alpha _i^*)}} \right)}, \quad	i = 0, \ldots ,n;\,k = 0, \ldots ,m.
\end{equation}
Following the approach presented in Section \ref{sec:ort}, we can easily show that the Lagrange form of the shifted Gegenbauer interpolation of $f$ at the adjoint SGG nodes can be written as:
\begin{equation}\label{sec:ort:eq:Lagint12}
	{P_m}f(x) = \sum\limits_{k = 0}^m {{f_{L,m,i,k}^{(\alpha_i^*)}}\,\mathcal{L} _{L,m,i,k}^{(\alpha_i^* )}(x)} ,
\end{equation}
where $\mathcal{L} _{L,m,i,k}^{(\alpha_i^*)}(x)$, are the Lagrange interpolating polynomials defined by
\begin{equation}\label{sec:ort:eq:Lag12}
	\mathcal{L} _{L,m,i,k}^{(\alpha_i^* )}(x) = \varpi _{L,m,k}^{(\alpha_i^* )}\sum\limits_{j = 0}^m {{{\left( {\lambda _{L,j}^{(\alpha_i^* )}} \right)}^{ - 1}}\,C_{L,j}^{(\alpha_i^* )}\left(z_{L,m,i,k}^{(\alpha_i^* )}\right)\,C_{L,j}^{(\alpha_i^* )}(x)} ,\quad i = 0, \ldots ,n; k = 0, \ldots, m.
\end{equation}
Therefore,
\begin{align*}
	\int_0^{{x_i}} {{P_m}f(x)\,dx} &= \sum\limits_{k = 0}^m {f_{L,m,i,k}^{(\alpha _i^*)}{\mkern 1mu} \int_0^{{x_i}} {\mathcal{L} _{L,m,i,k}^{(\alpha _i^*)}(x)\,dx} }\nonumber\\
	&= \sum\limits_{k = 0}^m {\left[ {\varpi _{L,m,k}^{(\alpha _i^*)}\sum\limits_{j = 0}^m {{{\left( {\lambda _{L,j}^{(\alpha _i^*)}} \right)}^{ - 1}}{\mkern 1mu} C_{L,j}^{(\alpha _i^*)}\left( {z_{L,m,i,k}^{(\alpha _i^*)}} \right){\mkern 1mu} \int_0^{{x_i}} {C_{L,j}^{(\alpha _i^*)}(x)\,dx} } } \right]f_{L,m,i,k}^{(\alpha _i^*)}}\\
	&= \sum\limits_{k = 0}^m {{p_{L,i,k}^{(1)}} f_{L,m,i,k}^{(\alpha_i^*)}}.\\
\Rightarrow \int_0^{{x_i}} {f(x)\,dx} &= \sum\limits_{k = 0}^m {p_{L,i,k}^{(1)}f_{L,m,i,k}^{(\alpha _i^*)}}  + E_{L,m}^{(\alpha _i^*)}({x_i},{\xi _i}),
\end{align*}
where the quadrature error term, $E_{L,m}^{(\alpha _i^*)}({x_i},{\xi _i})$, follows directly from Theorem \ref{subsec:err:thm1} on substituting the value of $\alpha$ with $\alpha_i^*$, and expanding the shifted Gegenbauer expansion series up to the $(m+1)$th-term.
\end{proof}
\subsection{Error Bounds of the Optimal S-Quadrature}
To study the error bounds of the optimal S-quadrature, we require the following two lemmas.
\label{subsec:errbounds}
\begin{lem}\label{subsec:errbounds:lem:max1}
The maximum value of the shifted Gegenbauer polynomials $C_{L,n}^{(\alpha)}(x)$, is given by
\begin{subequations}\label{subsec:errbounds:eq:linfnorm1}
\begin{empheq}[left={\mathop {{\text{max}}}\limits_{x \in [0,\,L]} \left| {C_{L,n}^{(\alpha )}(x)} \right| = {\left\| {C_{L,n}^{(\alpha )}} \right\|_{{L^\infty }[0,\,L]}} =}\empheqlbrace]{align}
&1,\quad n \ge 0 \wedge \alpha  \ge 0,\\
&\frac{{n!\,\Gamma (2\alpha )}}{{\Gamma (n + 2\alpha )}}\,\left| {\left( \begin{array}{l}
\frac{n}{2} + \alpha  - 1\\
\hfill \frac{n}{2} \hfill
\end{array} \right)} \right|,\quad \frac{n}{2} \in \mathbb{Z}_0^+ \wedge\; - \frac{1}{2} < \alpha  < 0,\\
&A^{(\alpha)}\,{n^{ - \alpha }},\quad n \gg 1 \wedge  - \frac{1}{2} < \alpha  < 0\label{subsec:errbounds:eq:linfnorm22},
\end{empheq}
\end{subequations}
where $\mathbb{Z}_0^+ = \mathbb{Z}^+ \cup \{ 0\}$, is the set of all non-negative integers, and $A^{(\alpha)} > 1$, is a constant dependent on $\alpha$, but independent of $n$. Moreover, for odd $n > 0$, and $- \frac{1}{2} < \alpha  < 0$, the maximum value of $C_{L,n}^{(\alpha)}(x)$, is bounded by the following inequality
\begin{equation}\label{subsec:errbounds:eq:linfnorm2}
	\mathop {{\text{max}}}\limits_{x \in [0,\,L]} \left| {C_{L,n}^{(\alpha )}(x)} \right| = {\left\| {C_{L,n}^{(\alpha )}} \right\|_{{L^\infty }[0,\,L]}} < \frac{{2\,n!\,\Gamma (2\alpha )\,\left| \alpha  \right|}}{{\sqrt {n\,(2\alpha  + n)} \,\Gamma (n + 2\alpha )}}\,\left| {\left( \begin{array}{l}
	\frac{{n - 1}}{2} + \alpha \\
	\hfill \frac{{n - 1}}{2} \hfill
	\end{array} \right)} \right|.
\end{equation}
\end{lem}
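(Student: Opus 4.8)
The plan is to eliminate the shift, reduce to the ordinary Gegenbauer polynomial on $[-1,1]$, and then control both the location and the size of its maximum by a single Sonin-type auxiliary function; the asymptotic branch then falls out of Stirling's formula. First I would use the definition $C_{L,n}^{(\alpha)}(x) = C_n^{(\alpha)}(2x/L-1)$: since $x \mapsto y = 2x/L - 1$ is an affine bijection of $[0,L]$ onto $[-1,1]$, we have $\max_{x\in[0,L]}|C_{L,n}^{(\alpha)}(x)| = \max_{y\in[-1,1]}|C_n^{(\alpha)}(y)|$, and the supremum is attained by continuity on the compact interval. Thus it suffices to study $y := C_n^{(\alpha)}$ on $[-1,1]$, where by the normalization \eqref{sec:pre:eq:normaliz1} one has $C_n^{(\alpha)}(1) = 1$ and $C_n^{(\alpha)}$ is even or odd according to the parity of $n$.

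The crux is the comparison function
\[
T(x) = n(n+2\alpha)\,\bigl[C_n^{(\alpha)}(x)\bigr]^2 + (1-x^2)\,\bigl[(C_n^{(\alpha)})'(x)\bigr]^2 .
\]
Substituting $(1-x^2)y''$ from the classical ultraspherical differential equation $(1-x^2)y'' - (2\alpha+1)x y' + n(n+2\alpha)y = 0$ satisfied by $y = C_n^{(\alpha)}$, all mixed terms cancel and I obtain
\[
T'(x) = 4\alpha\,x\,\bigl[(C_n^{(\alpha)})'(x)\bigr]^2 .
\]
Hence the monotonicity of $T$ on $(0,1)$ is dictated solely by the sign of $\alpha$. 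At an interior critical point $x_c$ of $y$ one has $T(x_c) = n(n+2\alpha)[C_n^{(\alpha)}(x_c)]^2$, while $T(1) = n(n+2\alpha)$, so comparing the size of $|y|$ at its successive relative maxima and at the endpoints reduces to reading off the sign of $T'$.

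For $\alpha \ge 0$, $T'\ge 0$, so $T(x_c)\le T(1)$ at every interior critical point, giving $|C_n^{(\alpha)}(x_c)| \le 1 = |C_n^{(\alpha)}(\pm1)|$; the maximum is $1$, which is the first branch of \eqref{subsec:errbounds:eq:linfnorm1}. For $-1/2 < \alpha < 0$, $T$ is strictly decreasing on $(0,1)$, so the relative maxima of $|y|$ grow toward the centre. If $n$ is even, $x=0$ is a critical point by parity and $T(0) > T(x_c)$, $T(0) > T(1)$ for every other critical point, whence the global maximum equals $|C_n^{(\alpha)}(0)|$; evaluating this value through the relation to $P_n^{(\alpha-1/2,\alpha-1/2)}(0)$ and converting to the present normalization gives the second branch of \eqref{subsec:errbounds:eq:linfnorm1}. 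If $n$ is odd then $y(0)=0$, and at any interior critical point $x_c>0$ the strict decrease of $T$ yields $n(n+2\alpha)[C_n^{(\alpha)}(x_c)]^2 < T(0) = [(C_n^{(\alpha)})'(0)]^2$, i.e. $|C_n^{(\alpha)}(x_c)| < |(C_n^{(\alpha)})'(0)|/\sqrt{n(n+2\alpha)}$; computing $(C_n^{(\alpha)})'(0)$ from the standard relation expressing $(C_n^{(\alpha)})'$ through $C_{n-1}^{(\alpha+1)}$ together with the centre value of $C_{n-1}^{(\alpha+1)}$ produces precisely the right-hand side of \eqref{subsec:errbounds:eq:linfnorm2}. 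A short check that this right-hand side exceeds $1$ (so the endpoint value cannot dominate) then upgrades the pointwise estimate to the claimed bound on the global maximum.

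Finally, the asymptotic branch \eqref{subsec:errbounds:eq:linfnorm22} follows by applying Stirling's formula to the closed form in the even case: expanding the quotient of Gamma functions shows it behaves like $A^{(\alpha)}n^{-\alpha}$ with $A^{(\alpha)}$ depending on $\alpha$ alone, and careful tracking of the constant establishes $A^{(\alpha)}>1$. I expect the monotonicity step to be painless, since the single identity $T'(x) = 4\alpha x[(C_n^{(\alpha)})'(x)]^2$ settles all three parameter regimes simultaneously. The real work, and the main obstacle, is the bookkeeping of the special-value evaluations $C_n^{(\alpha)}(0)$ and $(C_n^{(\alpha)})'(0)$ in the paper's nonstandard normalization $C_n^{(\alpha)}(1)=1$, and, for \eqref{subsec:errbounds:eq:linfnorm22}, pinning down the Stirling constant $A^{(\alpha)}$ and verifying $A^{(\alpha)}>1$.
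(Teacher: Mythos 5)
Your proposal is correct, but it takes a genuinely different route from the paper: the paper's proof is essentially a citation argument, while yours is a self-contained derivation. Specifically, the paper obtains the equalities in \eqref{subsec:errbounds:eq:linfnorm1} and the odd-degree bound \eqref{subsec:errbounds:eq:linfnorm2} by invoking the standardization \eqref{sec:pre:eq:normaliz1}, Lemma 2.1 of \citet{Elgindy2013a}, and Szeg\H{o}'s classical extremum theorems (Equations (7.33.2) and (7.33.3) in \cite{Szego1975}); your Sonin-type comparison function $T(x)$ with $T'(x)=4\alpha x\,[(C_n^{(\alpha)})'(x)]^2$ is precisely the mechanism behind those cited theorems, so you are in effect unpacking the black boxes the paper leans on. The two approaches diverge more substantially on the constant $A^{(\alpha)}$: the paper argues indirectly that $A^{(\alpha)}=n^{\alpha}\,\|C_{L,n}^{(\alpha)}\|_{L^{\infty}[0,L]}$ is monotonically decreasing in $\alpha$ on $(-1/2,0)$ for large $n$ (citing the monotonicity of Gegenbauer polynomials in $\alpha$ from \cite[Appendix D]{Elgindy2013}) and tends to $1$ as $\alpha\to 0$, whence $A^{(\alpha)}>1$; you instead compute the constant explicitly by Stirling's formula, which (after the duplication formula) gives $A^{(\alpha)}=2^{\alpha}\Gamma(\alpha+\tfrac12)/\sqrt{\pi}$, and the inequality $A^{(\alpha)}>1$ then follows from $\ln 2+\psi(\alpha+\tfrac12)<0$ on $(-1/2,0)$. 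Your version buys self-containedness, an explicit constant, and a single identity that settles all parameter regimes; the paper's version buys brevity at the cost of relying on external results and on an asymptotic monotonicity claim that is harder to make rigorous. One small point to tighten: your Stirling analysis derives \eqref{subsec:errbounds:eq:linfnorm22} only from the even-degree closed form; for odd $n$ you should note that the right-hand side of \eqref{subsec:errbounds:eq:linfnorm2} has the same asymptotic behavior, namely
\begin{equation*}
\frac{2\,n!\,\Gamma (2\alpha )\,\left| \alpha  \right|}{\sqrt {n\,(2\alpha  + n)} \,\Gamma (n + 2\alpha )}\,\left| \binom{\frac{n-1}{2}+\alpha}{\frac{n-1}{2}} \right| \sim \frac{2^{\alpha}}{\sqrt{\pi}}\,\Gamma\!\left(\alpha+\tfrac12\right) n^{-\alpha},\quad n\to\infty,
\end{equation*}
so that the same constant $A^{(\alpha)}$ governs both parities and the branch \eqref{subsec:errbounds:eq:linfnorm22} holds for all large $n$.
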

\begin{proof}
The proof is straightforward. Indeed, Equalities \eqref{subsec:errbounds:eq:linfnorm1} follow using Equation \eqref{sec:pre:eq:normaliz1}, Lemma 2.1 in \cite{Elgindy2013a}, and Equation (7.33.2) in \cite{Szego1975}. Since $C_{L,n+1}^{(\alpha)}(x)$, for a certain value of $0 \le x \le L$, monotonically decreases for increasing values of $\alpha$ in the range $-1/2 < \alpha < 0$, as $n \to \infty$; cf. \cite[Appendix D]{Elgindy2013}, then
\begin{equation}
	{A^{(\alpha)} } = {n^\alpha }\,{\left\| {C_{L,n}^{(\alpha )}} \right\|_{{L^\infty }[0,L]}},
\end{equation}
is also monotonically decreasing for increasing values of $\alpha \in (-1/2, 0)$. Since
\begin{equation}
	\mathop {\lim }\limits_{\alpha  \to 0} {A^{(\alpha)} } = 1,
\end{equation}
then $A^{(\alpha)} > 1\, \forall \alpha \in (-1/2, 0)$. Finally, Inequality \eqref{subsec:errbounds:eq:linfnorm2} follows using Equation \eqref{sec:pre:eq:normaliz1} and Equation (7.33.3) in \cite{Szego1975}.
\end{proof}
\begin{lem}\label{sec1:lem:1}
For a fixed $\alpha > -1/2$, the factor $(n + 1)!  K_{n + 1}^{(\alpha )}$, is of order ${n^{3/2 - \alpha }}  {(2  n/e)^n}$, for large values of $n$.
\end{lem}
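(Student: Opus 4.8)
The plan is to substitute the closed-form leading coefficient \eqref{sec:pre:eq:lead1} into the product and reduce everything to ratios of Gamma functions to which Stirling's asymptotics apply. Writing $(n+1)! = \Gamma(n+2)$ and using $K_{n+1}^{(\alpha)} = 2^{n}\,\Gamma(n+1+\alpha)\,\Gamma(2\alpha+1)/\bigl(\Gamma(n+1+2\alpha)\,\Gamma(\alpha+1)\bigr)$ from \eqref{sec:pre:eq:lead1}, I would factor out the $n$-independent constant $\Gamma(2\alpha+1)/\Gamma(\alpha+1)$ and display
\[
(n+1)!\,K_{n+1}^{(\alpha)} = \frac{\Gamma(2\alpha+1)}{\Gamma(\alpha+1)}\,2^{n}\,\Gamma(n+2)\,\frac{\Gamma(n+1+\alpha)}{\Gamma(n+1+2\alpha)}.
\]

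Next I would treat the two remaining $n$-dependent pieces separately. For the Gamma ratio I would invoke the standard asymptotic $\Gamma(n+a)/\Gamma(n+b) = n^{a-b}\bigl(1+O(1/n)\bigr)$ with $a = 1+\alpha$ and $b = 1+2\alpha$, giving $\Gamma(n+1+\alpha)/\Gamma(n+1+2\alpha) \sim n^{-\alpha}$. For the factorial piece I would apply Stirling's formula to $2^{n}\,\Gamma(n+2) = 2^{n}(n+1)\,n!$, using $n! \sim \sqrt{2\pi n}\,(n/e)^{n}$, so that $2^{n}(n+1)\,n! \sim (n+1)\sqrt{2\pi n}\,(2n/e)^{n}$, whose polynomial prefactor simplifies to $\sqrt{2\pi}\,n^{3/2}$ at leading order.

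Combining the two estimates then yields
\[
(n+1)!\,K_{n+1}^{(\alpha)} \sim \sqrt{2\pi}\,\frac{\Gamma(2\alpha+1)}{\Gamma(\alpha+1)}\,n^{3/2-\alpha}\,(2n/e)^{n},
\]
which is precisely of order $n^{3/2-\alpha}\,(2n/e)^{n}$, with a constant independent of $n$. No single step is genuinely hard; the only point requiring care is the bookkeeping in merging the $2^{n}$ factor with $n!$ into the $(2n/e)^{n}$ growth while correctly pairing the leftover linear factor $(n+1)$ with the $\sqrt{n}$ from Stirling to form the $n^{3/2}$ prefactor, and then absorbing the $n^{-\alpha}$ from the Gamma ratio to reach the stated exponent $3/2-\alpha$.
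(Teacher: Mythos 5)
Your proof is correct, but it takes a more self-contained route than the paper does. The paper's own proof is a one-line remark: it defers the core asymptotics to Lemma 2.2 of \citet{Elgindy2013a} (which bounds the same kind of factor) and simply sharpens the factorial part by noting $(n+1)! = (n+1)\cdot n! \approx \sqrt{2\pi}\, n^{3/2}(n/e)^n$, which upgrades the power of $n$ in the prefactor. You instead derive everything from scratch: substituting the closed form \eqref{sec:pre:eq:lead1} for $K_{n+1}^{(\alpha)}$, splitting off the $n$-independent constant $\Gamma(2\alpha+1)/\Gamma(\alpha+1)$, applying the standard ratio asymptotic $\Gamma(n+a)/\Gamma(n+b)\sim n^{a-b}$ to get the $n^{-\alpha}$ factor, and using Stirling's formula to merge $2^n\,\Gamma(n+2)$ into $\sqrt{2\pi}\,n^{3/2}(2n/e)^n$. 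What your version buys is independence from the external reference and an explicit asymptotic constant $\sqrt{2\pi}\,\Gamma(2\alpha+1)/\Gamma(\alpha+1)$, which the paper never exhibits; what the paper's version buys is brevity, at the cost of requiring the reader to consult the earlier work. One point worth stating explicitly in your write-up is that the hypothesis $\alpha > -1/2$ is what guarantees $\Gamma(2\alpha+1)$ is finite and $\Gamma(n+1+2\alpha)$ has positive argument for large $n$, so all the Gamma manipulations are legitimate.
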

\begin{proof}
The lemma is a more accurate version of Lemma 2.2 in \cite{Elgindy2013a} by realizing that $(n + 1)! = (n + 1) \cdot n! \approx n \cdot \sqrt {2\,\pi \,n} \,{\left( {n/e} \right)^n} = \sqrt {2\,\pi } \,{n^{\frac{3}{2}}}\,{\left( {n/e} \right)^n}$, as $n \to \infty$.
\end{proof}
The following theorem gives the error bounds of the optimal S-quadrature.
\begin{thm}[Error bounds]\label{sec1:thm:krooma1}
Assume that $f(x) \in C^{m+1}[0, L]$, and ${\left\| {{f^{(m + 1)}}} \right\|_{{L^\infty }[0,L]}} \le A \in {\mathbb{R}^ + }$, for some number $m \in \mathbb{Z}_0^+$. Moreover, let $\int_{0}^{{x_i}} {f(x)\,dx}$, be approximated by the optimal S-quadrature \eqref{subsec:opt:squad1} up to the $(m+1)$th shifted Gegenbauer expansion term, for each integration node $x_i, i = 0, \ldots, n$. Then there exist some positive constants $D_1^{(\alpha _i^*)}$ and $D_2^{(\alpha _i^*)}$, independent of $m$ such that the truncation error of the optimal S-quadrature, $E_{L,m}^{(\alpha _i^*)}$, is bounded by the following inequalities:
\begin{equation}
	\left| {E_{L,m}^{(\alpha _i^*)}} \right| \le \frac{{A{2^{ - 2m - 1}}\Gamma \left( {\alpha  + 1} \right){x_i}{L^{m + 1}}\Gamma \left( {m + 2\alpha  + 1} \right)}}{{\Gamma \left( {2\alpha  + 1} \right)\Gamma \left( {m + 2} \right)\Gamma \left( {m + \alpha  + 1} \right)}}\left( {\left\{ \begin{array}{l}
	1,\quad m \ge 0 \wedge \alpha  \ge 0,\\
	\frac{{\left( {m + 1} \right)!\Gamma \left( {2\alpha } \right)}}{{\Gamma \left( {m + 2\alpha  + 1} \right)}}\,\left| {\left( {\begin{array}{*{20}{c}}
{\frac{{m + 1}}{2} + \alpha  - 1}\\
{\frac{{m + 1}}{2}}
\end{array}} \right)} \right|,\quad \frac{{m + 1}}{2} \in \mathbb{Z}^+  \wedge  - \frac{1}{2} < \alpha  < 0
	\end{array} \right.} \right),
\end{equation}
\begin{equation}
	\left| {E_{L,m}^{(\alpha _i^*)}} \right| < \frac{{A{2^{ - 2m - 1}}\Gamma \left( \alpha  \right)\left| \alpha  \right|{\mkern 1mu} {x_i}{L^{m + 1}}}}{{\sqrt {\left( {m + 1} \right)\left( {2\alpha  + m + 1} \right)} {\mkern 1mu} \Gamma \left( {m + \alpha  + 1} \right)}}\,\left| {\left( {\begin{array}{*{20}{c}}
{\frac{m}{2} + \alpha }\\
{\frac{m}{2}}
\end{array}} \right)} \right|,\quad \frac{m}{2} \in \mathbb{Z}_0^ + \wedge  - \frac{1}{2} < \alpha  < 0,
\end{equation}
\begin{subequations}\label{sec1:ineq:errorkimo1}
	\begin{empheq}[left={\left| {E_{L,m}^{(\alpha _i^*)}} \right| \le}\empheqlbrace]{align}
	&B_1^{(\alpha _i^*)}\frac{{{e^m}\,{L^{m + 1}}\,{x_i}}}{{{2^{2m + 1}}\,{m^{m + 3/2 - \alpha _i^*}}}},\quad \alpha _i^* \ge 0 \wedge m \gg 1,\\
	&B_2^{(\alpha _i^*)}\frac{{{e^m}\,{L^{m + 1}}\,{x_i}}}{{{2^{2m + 1}}\,{m^{m + 3/2}}}},\quad - \frac{1}{2} < \alpha _i^* < 0  \wedge m \gg 1,
	\end{empheq}
\end{subequations}
for all $i = 0, \ldots, n$, where $B_1^{(\alpha _i^*)} = A D_1^{(\alpha _i^*)}$, and $B_2^{(\alpha _i^*)} = B_1^{(\alpha _i^*)} D_2^{(\alpha _i^*)}$.
\end{thm}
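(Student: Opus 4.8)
The plan is to start from the exact error representation supplied by Theorem~\ref{subsec1:kimobasha1},
\begin{equation*}
E_{L,m}^{(\alpha_i^*)}(x_i,\xi_i) = \left(\frac{L}{2}\right)^{m+1}\frac{f^{(m+1)}(\xi_i)}{2^m\,(m+1)!}\,\eta_{L,i,m}(\alpha_i^*),
\end{equation*}
and to recall that $\eta_{L,i,m}(\alpha) = \bigl(2^m/K_{m+1}^{(\alpha)}\bigr)\int_0^{x_i} C_{L,m+1}^{(\alpha)}(x)\,dx$. Taking absolute values and invoking the hypothesis $\|f^{(m+1)}\|_{L^\infty[0,L]}\le A$ produces the master estimate
\begin{equation*}
\left|E_{L,m}^{(\alpha_i^*)}\right| \le \frac{A\,L^{m+1}}{2^{m+1}\,(m+1)!\,K_{m+1}^{(\alpha_i^*)}}\left|\int_0^{x_i} C_{L,m+1}^{(\alpha_i^*)}(x)\,dx\right|,
\end{equation*}
equivalently $\left|E_{L,m}^{(\alpha_i^*)}\right| \le \bigl(A L^{m+1}/(2^{2m+1}(m+1)!)\bigr)\,|\eta_{L,i,m}(\alpha_i^*)|$. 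All three families of inequalities will be read off from this single estimate by inserting the sup-norm bounds of Lemma~\ref{subsec:errbounds:lem:max1} together with the leading-coefficient formula \eqref{sec:pre:eq:lead1} or the asymptotic order of Lemma~\ref{sec1:lem:1}.

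For the first two (non-asymptotic) inequalities I would exploit the defining optimality property \eqref{sec1:eq:optgegepar1}: since $\alpha_i^*$ minimizes $\eta_{L,i,m}^2(\alpha)$ over $\alpha>-1/2$, we have $|\eta_{L,i,m}(\alpha_i^*)|\le|\eta_{L,i,m}(\alpha)|$ for every admissible $\alpha$, so the master estimate remains valid with $\alpha_i^*$ replaced by an arbitrary $\alpha$. I then bound the integral crudely by $\int_0^{x_i}|C_{L,m+1}^{(\alpha)}| \le x_i\,\|C_{L,m+1}^{(\alpha)}\|_{L^\infty[0,L]}$ and substitute the explicit sup-norm from Lemma~\ref{subsec:errbounds:lem:max1}: the first two branches of \eqref{subsec:errbounds:eq:linfnorm1} (with $n=m+1$) give the first displayed inequality, while the refined estimate \eqref{subsec:errbounds:eq:linfnorm2} for odd $n=m+1$ (i.e.\ $m/2\in\mathbb{Z}_0^+$) gives the second. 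In both cases the prefactor $L^{m+1}/\bigl(2^{m+1}(m+1)!\,K_{m+1}^{(\alpha)}\bigr)$ is rewritten via $K_{m+1}^{(\alpha)} = 2^m\Gamma(m+1+\alpha)\Gamma(2\alpha+1)/\bigl(\Gamma(m+1+2\alpha)\Gamma(\alpha+1)\bigr)$ and $(m+1)!=\Gamma(m+2)$, and the $\Gamma$-factors collapse to the stated coefficients after cancellation; in particular $\Gamma(2\alpha)=\Gamma(2\alpha+1)/(2\alpha)$ and $\Gamma(\alpha+1)=\alpha\Gamma(\alpha)$ reduce the prefactor times \eqref{subsec:errbounds:eq:linfnorm2} exactly to $\Gamma(\alpha)/\Gamma(m+\alpha+1)$.

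For the asymptotic inequalities \eqref{sec1:ineq:errorkimo1} I would work directly with $\alpha_i^*$ in the master estimate and insert the large-$m$ behaviour of its two ingredients. By Lemma~\ref{sec1:lem:1}, $(m+1)!\,K_{m+1}^{(\alpha_i^*)}$ is of order $m^{3/2-\alpha_i^*}(2m/e)^m$. For $\alpha_i^*\ge0$ the sup-norm is at most $1$ (first branch of \eqref{subsec:errbounds:eq:linfnorm1}), and dividing $L^{m+1}/2^{m+1}$ by this order yields $e^m L^{m+1}/\bigl(2^{2m+1}m^{m+3/2-\alpha_i^*}\bigr)$, i.e.\ the first branch after absorbing the implicit constant into $D_1^{(\alpha_i^*)}$. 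For $-1/2<\alpha_i^*<0$ the sup-norm grows like $A^{(\alpha_i^*)}(m+1)^{-\alpha_i^*}$ by \eqref{subsec:errbounds:eq:linfnorm22}; the decisive point is that this factor $m^{-\alpha_i^*}$ exactly cancels the $m^{\alpha_i^*}$ hidden in $m^{3/2-\alpha_i^*}$, leaving the clean power $m^{m+3/2}$ and hence the second branch with $D_2^{(\alpha_i^*)}$ absorbing $A^{(\alpha_i^*)}$.

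The routine part is the $\Gamma$-function bookkeeping behind the first two inequalities; the only genuinely substantive observation is the cancellation of the $m^{\pm\alpha_i^*}$ powers in the negative-parameter asymptotic case, which renders the exponent of $m$ independent of $\alpha_i^*$ and thereby explains the structurally different form of the two branches of \eqref{sec1:ineq:errorkimo1}.
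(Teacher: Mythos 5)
Your proof is correct and takes essentially the same route as the paper's own (very terse) proof, which simply cites the error formula \eqref{sec1:eq:errorkimo} together with Lemmas \ref{subsec:errbounds:lem:max1} and \ref{sec1:lem:1}---precisely the ingredients you combine, and your $\Gamma$-function reductions and the $m^{-\alpha_i^*}$ cancellation in the negative-parameter asymptotic branch check out. Your additional observation that the optimality property \eqref{sec1:eq:optgegepar1} of $\alpha_i^*$ justifies stating the first two bounds with a free parameter $\alpha$ is a sensible clarification of a point the paper leaves implicit.
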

\begin{proof}
The proof is straightforward using Equation \eqref{sec1:eq:errorkimo}, and Lemmas \ref{subsec:errbounds:lem:max1} and \ref{sec1:lem:1}.
\end{proof}
\subsection{The Relation Between the S-matrix and the Gegenbauer Integration Matrix}
\label{subsec:rel}
Let ${{{\mathbf{\hat P}}}^{(1)}} = (\hat p_{i,k}^{(1)}),i,k = 0, \ldots, n,$ be the first-order square Gegenbauer integration matrix of size $(n + 1)$, constructed by Theorem 2.1 in \cite{Elgindy2013}, and consider the Lagrange form of the shifted Gegenbauer interpolation of a real-valued function $f$ at the SGG nodes, $P_nf$, given by Equation \ref{sec:ort:eq:Lagint1}. Since
\begin{align}
\sum\limits_{k = 0}^n {\hat p_{L,i,k}^{(1)}{\mkern 1mu} f_{L,n,k}^{(\alpha )}}  &= \sum\limits_{k = 0}^n {\hat p_{L,i,k}^{(1)}{\mkern 1mu} ({P_n}f)_{L,n,k}^{(\alpha )}}  = \int_0^{x_{L,n,i}^{(\alpha )}} {{P_n}f(x){\mkern 1mu} dx}  = \frac{L}{2}\int_{ - 1}^{{2\,x_{L,n,i}^{(\alpha )}/L} - 1} {{P_n}f\left( {\frac{L}{2}(x + 1)} \right)\,dx}\nonumber\\
& = \frac{L}{2}\int_{ - 1}^{x_{n,i}^{(\alpha )}} {{P_n}f\left( {\frac{L}{2}(x + 1)} \right)\,dx}  = \frac{L}{2}\sum\limits_{k = 0}^n {\hat p_{ik}^{(1)}\,{P_n}f\left( {\frac{L}{2}\left( {x_{n,k}^{(\alpha )} + 1} \right)} \right)}  = \frac{L}{2}\sum\limits_{k = 0}^n {\hat p_{ik}^{(1)}\,({P_n}f)_{L,n,k}^{(\alpha )}}\nonumber\\
& = \frac{L}{2}\sum\limits_{k = 0}^n {\hat p_{ik}^{(1)}\,f_{L,n,k}^{(\alpha )}},
\end{align}
where $({P_n}f)_{L,n,k}^{(\alpha )} = {P_n}f(x_{L,n,k}^{(\alpha )})$, then it follows that
\begin{equation}
	\hat p_{L,i,k}^{(1)} = \frac{L}{2}\,\hat p_{ik}^{(1)},\quad i,k = 0, \ldots ,n,
\end{equation}
or in matrix form,
\begin{equation}
	{{{\mathbf{\hat P}}}_L^{(1)}} = \frac{L}{2}\,{{{\mathbf{\hat P}}}^{(1)}}.
\end{equation}
Moreover, using the change of variable
\begin{equation}
	{t_i} = \frac{L}{2}({s_i} + 1),\quad i = 0, \ldots ,n,
\end{equation}
and Cauchy's formula for repeated integration, we can calculate the $q$-fold definite integrals of ${{P_n}f}$ on $[0, x_{L,n,i}^{(\alpha )}], i = 0, \ldots ,n$, as follows:
\begin{align}
&\sum\limits_{k = 0}^n {\hat p_{L,i,k}^{(q)}\,f_{L,n,k}^{(\alpha )}}  = \sum\limits_{k = 0}^n {\hat p_{L,i,k}^{(q)}{\mkern 1mu} ({P_n}f)_{L,n,k}^{(\alpha )}}  = \int_0^{x_{L,n,i}^{(\alpha )}} {\int_0^{{t_{q - 1}}} { \ldots \int_0^{{t_2}} {\int_0^{{t_1}} {{P_n}f({t_0})\,d{t_0}d{t_1} \ldots d{t_{q - 2}}d{t_{q - 1}}} } } } \nonumber\\
&= {\left( {\frac{L}{2}} \right)^q}\,\int_{ - 1}^{2\,x_{L,n,i}^{(\alpha )}/L - 1} {\int_{ - 1}^{2\,{t_{q - 1}}/L - 1} { \ldots \int_{ - 1}^{2\,{t_2}/L - 1} {\int_{ - 1}^{2\,{t_1}/L - 1} {{P_n}f\left( {\frac{L}{2}({s_0} + 1)} \right)\,d{s_0}d{s_1} \ldots d{s_{q - 2}}d{s_{q - 1}}} } } } \nonumber\\
	&= {\left( {\frac{L}{2}} \right)^q}\,\int_{ - 1}^{x_{n,i}^{(\alpha )}} {\int_{ - 1}^{{s_{q - 1}}} { \ldots \int_{ - 1}^{{s_2}} {\int_{ - 1}^{{s_1}} {{P_n}f\left( {\frac{L}{2}({s_0} + 1)} \right)\,d{s_0}d{s_1} \ldots d{s_{q - 2}}d{s_{q - 1}}} } } }  = {\left( {\frac{L}{2}} \right)^q}\,\sum\limits_{k = 0}^n {\hat p_{ik}^{(q)}\,f_{L,n,k}^{(\alpha )}} \nonumber\\
	& = {\left( {\frac{L}{2}} \right)^q}\,\frac{1}{{(q - 1)!}}\int_{ - 1}^{x_{n,i}^{(\alpha )}} {{{\left( {x_{n,i}^{(\alpha )} - t} \right)}^{q - 1}}\,{P_n}f\left( {\frac{L}{2}(t + 1)} \right)\,dt}  = {\left( {\frac{L}{2}} \right)^q}\,\frac{1}{{(q - 1)!}}\,\sum\limits_{k = 0}^n {\hat p_{ik}^{(1)}\,{{\left( {x_{n,i}^{(\alpha )} - x_{n,k}^{(\alpha )}} \right)}^{q - 1}}\,f_{L,n,k}^{(\alpha )}},
\end{align}
where ${\mathbf{\hat P}}_L^{(q)} = \left( {\hat p_{L,i,k}^{(q)}} \right),\, i,k = 0, \ldots ,n,$ is the S-matrix of order $q, q \in \mathbb{Z}^+$. Hence,
\begin{equation}
	\hat p_{L,i,k}^{(q)} = {\left( {\frac{L}{2}} \right)^q}\,{\mkern 1mu} \hat p_{i,k}^{(q)} = {\left( {\frac{L}{2}} \right)^q}\,\frac{{{{\left( {x_{n,i}^{(\alpha )} - x_{n,k}^{(\alpha )}} \right)}^{q - 1}}}}{{(q - 1)!}}\,\hat p_{i,k}^{(1)},\quad i,k = 0, \ldots ,n,
\end{equation}
or in matrix form,
\begin{equation}
	{{{\mathbf{\hat P}}}_L^{(q)}} = {\left( {\frac{L}{2}} \right)^q}\,{{{\mathbf{\hat P}}}^{(q)}} = {\left( {\frac{L}{2}} \right)^q}\,\frac{1}{{(q - 1)!}}\left( {\left( {{\mathbf{x}}_n^{(\alpha )} \otimes {{\mathbf{J}}_{1,n + 1}}} \right) - \left( {{{\left( {{\mathbf{x}}_n^{(\alpha )}} \right)}^T} \otimes {{\mathbf{J}}_{n + 1,1}}} \right)} \right) \circ {{{\mathbf{\hat P}}}^{(1)}},
\end{equation}
where ${\mathbf{x}}_n^{(\alpha )} = {[x_{n,0}^{(\alpha )},x_{n,1}^{(\alpha )}, \ldots ,x_{n,n}^{(\alpha )},]^T}, {{\mathbf{J}}_{i,j}}$ is the all ones matrix of size $i \times j$, ``$\otimes$'' and ``$\circ$'' denote the Kronecker product and Hadamard product (entrywise product), respectively. Hence the S-matrices of higher-orders can be generated directly from the first-order Gegenbauer integration matrix. Similarly, we can show that the optimal S-matrices of distinct orders are related with the Gegenbauer integration matrices constructed by Theorem 2.2 in \cite{Elgindy2013} by the following equations:
\begin{equation}\label{subsec:rel:eq:smat1}
p_{L,i,k}^{(q)} = {\left( {\frac{L}{2}} \right)^q}\,{\mkern 1mu} p_{i,k}^{(q)} = {\left( {\frac{L}{2}} \right)^q}\,\frac{{{{\left( {x_{n,i}^{(\alpha )} - z_{m,i,k}^{(\alpha_i^* )}} \right)}^{q - 1}}}}{{(q - 1)!}}\,p_{i,k}^{(1)},\quad i = 0, \ldots ,n;k = 0, \ldots ,m.
\end{equation}
The optimal S-matrices of distinct orders can therefore be calculated efficiently using Equations \eqref{subsec:rel:eq:smat1}, and Algorithms 2.1 and 2.2 in \cite{Elgindy2013}. Since the convergence properties of the S-quadratures and the optimal S-quadratures are inherited from the convergence properties of the corresponding Gegenbauer integration matrices and optimal Gegenbauer integration matrices, respectively, the following useful result is straightforward.
\begin{cor}[Convergence of the optimal S-quadrature]\label{subsec:rel:squad}\text{}\\
Assume that $f(x) \in C^{m+1}[0, L]$, and ${\left\| {{f^{(m + 1)}}} \right\|_{{L^\infty }[0,L]}} \le A \in {\mathbb{R}^ + },$ for some number $m \in \mathbb{Z}^+$. Moreover, let $\int_{0}^{{x_i}} {f(x)\,dx}$, be approximated by the optimal S-quadrature \eqref{subsec:opt:squad1} up to the $(m+1)$th shifted Gegenbauer expansion term, for each integration node $x_i \in [0, L], i = 0, \ldots, n$. Then the optimal S-quadrature converges to the optimal shifted Chebyshev quadrature in the $L^{\infty}$-norm, as $m \to \infty$.
\end{cor}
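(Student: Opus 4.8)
The plan is to reduce the corollary to the analogous convergence result for the standard optimal Gegenbauer quadrature established in \cite{Elgindy2013}, and to transfer it through the scaling relation \eqref{subsec:rel:eq:smat1}. The first step is to record that the optimization problems \eqref{sec1:eq:optgegepar1} defining the optimal parameters $\alpha_i^*$ coincide in the shifted and unshifted settings. Applying the affine substitution $x = (L/2)(s+1)$ to the integral defining $\eta_{L,i,m}(\alpha)$ and using $C_{L,m+1}^{(\alpha)}(x) = C_{m+1}^{(\alpha)}(2x/L-1)$, I would obtain
\begin{equation}
\eta_{L,i,m}(\alpha) = \frac{L}{2}\,\eta_{i,m}(\alpha),\qquad \eta_{i,m}(\alpha) = \frac{2^m}{K_{m+1}^{(\alpha)}}\int_{-1}^{s_i} C_{m+1}^{(\alpha)}(s)\,ds,\quad s_i = \frac{2\,x_i}{L} - 1.
\end{equation}
Since the factor $L/2$ is independent of $\alpha$, the minimizer of $\eta_{L,i,m}^2$ coincides with the minimizer of $\eta_{i,m}^2$, so the optimal Gegenbauer parameters are identical to those of the unshifted problem on $[-1,1]$ evaluated at the image nodes $s_i$.

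Next I would invoke the scaling relation \eqref{subsec:rel:eq:smat1} with $q = 1$, namely $p_{L,i,k}^{(1)} = (L/2)\,p_{i,k}^{(1)}$. Combined with the same change of variable, this identifies the optimal S-quadrature \eqref{subsec:opt:squad1} at the node $x_i$ with $(L/2)$ times the standard optimal Gegenbauer quadrature applied to $f\!\left((L/2)(s+1)\right)$ at the adjoint node $s_i$, both rules sharing the common parameter $\alpha_i^*$. Because this identification holds termwise with a fixed scale $L/2$ that does not depend on $m$, every mode of convergence enjoyed by the standard optimal Gegenbauer quadrature is inherited verbatim by the optimal S-quadrature.

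The final step is to apply the convergence of the standard optimal Gegenbauer quadrature to the optimal Chebyshev quadrature (the case $\alpha = 0$) in the $L^\infty$-norm as $m \to \infty$, established in \cite{Elgindy2013}. Transferring this statement through the relation above and the affine bijection $x \mapsto 2x/L - 1$ of $[0,L]$ onto $[-1,1]$ immediately yields convergence of the optimal S-quadrature to the optimal shifted Chebyshev quadrature in the $L^\infty$-norm, which is the assertion of the corollary.

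I expect the only genuinely nontrivial ingredient to be the limiting behavior of the standard optimal Gegenbauer quadrature, which is the substantive fact borrowed from \cite{Elgindy2013}; within the present paper the argument is a routine transfer. The single point requiring care is verifying that the affine map preserves both the set of minimizers, handled by the identity $\eta_{L,i,m} = (L/2)\,\eta_{i,m}$ above, and the $L^\infty$ structure on the interval, so that no estimate beyond the scaling relation \eqref{subsec:rel:eq:smat1} is needed.
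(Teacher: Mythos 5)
Your proposal is correct and follows essentially the same route as the paper, whose entire proof is the one-line remark that the corollary follows from the scaling relations \eqref{subsec:rel:eq:smat1} and Theorem 2.4 in \cite{Elgindy2013}. The extra detail you supply --- the identity $\eta_{L,i,m}(\alpha) = (L/2)\,\eta_{i,m}(\alpha)$ showing that the optimal parameters $\alpha_i^*$ are preserved under the affine map, and the termwise identification of the two quadratures --- is exactly the verification the paper leaves implicit, so your write-up is a faithful (and more complete) version of the intended argument.
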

\begin{proof}
The corollary can be established easily using Equations \eqref{subsec:rel:eq:smat1} and Theorem 2.4 in \cite{Elgindy2013}.
\end{proof}

\vspace{-6pt}
\section{The SGPM}
\label{sec:theshi2}
\vspace{-2pt}
We commence our numerical scheme by recasting Problem $\mathcal{P}$ into its integral formulation. Thus twice integrating Equation \eqref{int:eq:teleg1} with respect to $t$ yields,
\begin{equation}\label{sec:theshi2:eq:tel2}
	u(x,t) - \kappa (x,t) + \beta_1 \int_0^t {u(x,\sigma )\,d\sigma }  + \beta_2 \int_0^t {\int_0^{{\sigma _2}} {u(x,{\sigma _1})\,d{\sigma _1}d{\sigma _2}} }  = \int_0^t {\int_0^{{\sigma _2}} {({u_{xx}}(x,{\sigma _1}) + f(x,{\sigma _1}))\,d{\sigma _1}d{\sigma _2}} },
\end{equation}
where
\begin{equation}
	\kappa (x,t) = (\beta_1 {\mkern 1mu} t + 1){\mkern 1mu} {g_1}(x) + t{\mkern 1mu} {g_2}(x).
\end{equation}
Using the substitution
\begin{equation}
	{u_{xx}}(x,t) = \phi (x,t),
\end{equation}
for some unknown function $\phi$, we can recover the unknown solution function $u$ and its first-order partial derivative $u_x$ in terms of $\phi$ by successive integration as follows:
\begin{align}
{u_x}(x,t) &= \int_0^x {\phi (\sigma ,t)\,d\sigma }  + {c_1}(t),\\
u(x,t) &= \int_0^x {\int_0^{{\sigma _2}} {\phi ({\sigma _1},t)\,d{\sigma _1}\,d{\sigma _2}} }  + x\,{c_1}(t) + {c_2}(t),
\end{align}
where ${c_1}(t)$ and ${c_2}(t)$ are some arbitrary functions in $t$. Using Dirichlet boundary conditions \eqref{int:dbc1} and \eqref{int:dbc2}, we find that
\begin{align}
	{c_1}(t) &= \frac{1}{l}\,\left( {{h_2}(t) - {h_1}(t) - \int_0^l {\int_0^{{\sigma _2}} {\phi ({\sigma _1},t)\,d{\sigma _1}\,d{\sigma _2}} } } \right);\\
	{c_2}(t) &= {h_1}(t).
\end{align}
Let ${\theta _{l,x}} = x/l$, and define the function $\psi (x,t)$, such that
\begin{equation}
	\psi (x,t) = \theta _{l,x}\,\left( {{h_2}(t) - {h_1}(t)} \right) + {h_1}(t),
\end{equation}
Moreover, let
\begin{align}
I_{q,x}^{(x)}(\phi (x,t)) &= \int_0^x {\int_0^{{\sigma _{q - 1}}} { \ldots \int_0^{{\sigma _2}} {\int_0^{{\sigma _1}} {\phi ({\sigma _0},t){\mkern 1mu} d{\sigma _0}d{\sigma _1} \ldots d{\sigma _{q - 2}}d{\sigma _{q - 1}}} } } },\\
I_{q,t}^{(t)}(\phi (x,t)) &= \int_0^t {\int_0^{{\sigma _{q - 1}}} { \ldots \int_0^{{\sigma _2}} {\int_0^{{\sigma _1}} {\phi (x,{\sigma _0}){\mkern 1mu} d{\sigma _0}d{\sigma _1} \ldots d{\sigma _{q - 2}}d{\sigma _{q - 1}}} } } },
\end{align}
and define the operator
\begin{equation}
	{J_{l,x}} = {I_{2,x}^{(x)}} - {\theta _{l,x}}\,{I_{2,l}^{(x)}}.
\end{equation}
Then we can simply write the unknown solution $u(x,t)$ as follows:
\begin{align}
	u(x,t) &= \int_0^x {\int_0^{{\sigma _2}} {\phi ({\sigma _1},t)\,d{\sigma _1}\,d{\sigma _2}}  - \theta _{l,x}\, \int_0^l {\int_0^{{\sigma _2}} {\phi ({\sigma _1},t)\,d{\sigma _1}\,d{\sigma _2}} } }  + \psi (x,t) = {J_{l,x}}\, \phi(x,t) + \psi(x,t) \label{sec:theshi2:eq:unknsol1}.
\end{align}
Hence Equation \eqref{sec:theshi2:eq:tel2} becomes
\begin{equation}\label{sec:theshi2:eq:intpdek1}
\Omega \,\phi  = \Psi ,
\end{equation}
where
\begin{align}
	\Omega  &= {J_{l,x}} + \beta_1 \,I_{1,t}^{(t)}\,{J_{l,x}} + \beta_2 \,I_{2,t}^{(t)}\,{J_{l,x}} - I_{2,t}^{(t)},\label{sec:theshi2:eq:omegpsi1}\\
	\Psi  &= \hat \psi  - \left( {\beta_1 \,I_{1,t}^{(t)} + \beta_2 \,I_{2,t}^{(t)}} \right)\,\psi  + I_{2,t}^{(t)}f;\label{sec:theshi2:eq:omegpsi2}
\end{align}
\begin{equation}
		\hat \psi (x,t) = \kappa (x,t) - \psi (x,t).	
\end{equation}
If we expand the unknown function $\phi(x,t)$, in a truncated series of bivariate shifted Gegenbauer polynomials as follows:
\begin{equation}
	\phi (x,t) \approx \sum\limits_{n = 0}^{{N_x}} {\sum\limits_{m = 0}^{{N_t}} {{{\hat \phi }_{n,m}}\,{}_{l,\tau }C_{n,m}^{(\alpha )}(x,t)} },
\end{equation}
then we can compute the continuous coefficients of the truncation, ${{\hat \phi }_{n,m}}, n = 0, \ldots, N_x; m = 0, \ldots, N_t$, using the two dimensional weighted inner product as follows:
\begin{equation}
{{\hat \phi }_{n,m}} = \frac{{{{\left( {\phi ,{}_{l,\tau }C_{n,m}^{(\alpha )}(x,t)} \right)}_{w_{l,\tau }^{(\alpha )}}}}}{{\left\| {{}_{l,\tau }C_{n,m}^{(\alpha )}} \right\|_{w_{l,\tau }^{(\alpha )}}^2}} = \frac{{\int_0^\tau  {\int_0^l {\phi (x,t)\,{}_{l,\tau }C_{n,m}^{(\alpha )}(x,t)\,w_{l,\tau }^{(\alpha )}(x,t)\,dx\,dt} } }}{{\int_0^\tau  {\int_0^l {{{\left( {{}_{l,\tau }C_{n,m}^{(\alpha )}(x,t)} \right)}^2}\,w_{l,\tau }^{(\alpha )}(x,t)\,dx\,dt} } }}.
\end{equation}
Instead, we lay a grid of SGG nodes, $\left(x_{l,N_x,i}^{(\alpha)}, t_{\tau,N_t,j}^{(\alpha)}\right), i = 0, \ldots, N_x; j = 0, \ldots, N_t$, on the rectangular domain $D_{l,\tau }^2$, and approximate the function $\phi$ by interpolation at those nodes. Let
\begin{equation}
	{\phi _{s,k}} = \phi \left(x_{l,{N_x},s}^{(\alpha )},t_{\tau ,{N_t},k}^{(\alpha )}\right),\,s = 0, \ldots ,{N_x};k = 0, \ldots ,{N_t}.
\end{equation}
 The polynomial interpolant of $\phi$ in two dimensions can be written in terms of the discrete coefficients $\tilde \phi_{n,m}$, or in the equivalent Lagrange form as follows:
\begin{equation}\label{sec:theshi2:eq:modlag1}
{P_{{N_x},{N_t}}}\phi (x,t) = \sum\limits_{n = 0}^{{N_x}} {\sum\limits_{m = 0}^{{N_t}} {{{\tilde \phi }_{n,m}}\,{}_{l,\tau }C_{n,m}^{(\alpha )}(x,t)} }  = \sum\limits_{s = 0}^{{N_x}} {\sum\limits_{k = 0}^{{N_t}} {{\phi _{s,k}}\,{}_{l,\tau }\mathcal{L} _{{N_x},{N_t},s,k}^{(\alpha )}(x,t)} } ,
\end{equation}
where
\begin{equation}\label{sec:theshi2:eq:coeff22}
{}_{l,\tau }\mathcal{L} _{{N_x},{N_t},s,k}^{(\alpha )}(x,t) = \mathcal{L} _{l,{N_x},s}^{(\alpha )}(x)\,\mathcal{L} _{\tau ,{N_t},k}^{(\alpha )}(t).
\end{equation}
Clearly,
\begin{equation}
	{}_{l,\tau }\mathcal{L} _{{N_x},{N_t},s,k}^{(\alpha )}(x_{l,{N_x},i}^{(\alpha )},t_{\tau ,{N_t},j}^{(\alpha )}) = {\delta _{i,s}}\,{\delta _{j,k}}, i, s = 0, \ldots, N_x; j, k = 0, \ldots, N_t,
\end{equation}
and by construction, we find that
\begin{align}
\int_0^{x_{l,{N_x},i}^{(\alpha )}} {{P_{{N_x},{N_t}}}\phi (x,t_{\tau ,{N_t},j}^{(\alpha )})\,dx}  &= \sum\limits_{s = 0}^{{N_x}} {\sum\limits_{k = 0}^{{N_t}} {{\phi _{s,k}}\,\int_0^{x_{l,{N_x},i}^{(\alpha )}} {{}_{l,\tau }\mathcal{L} _{{N_x},{N_t},s,k}^{(\alpha )}(x,t_{\tau ,{N_t},j}^{(\alpha )})\,dx} } } \nonumber\\
 &= \sum\limits_{s = 0}^{{N_x}} {\hat p_{l,i,s}^{(1)}\,{\phi _{s,j}}} ,\quad i = 0, \ldots ,{N_x};j = 0, \ldots ,{N_t},\\
\int_0^{t_{\tau ,{N_t},j}^{(\alpha )}} {{P_{{N_x},{N_t}}}\phi (x_{l,{N_x},i}^{(\alpha )},t)\,dt}  &= \sum\limits_{s = 0}^{{N_x}} {\sum\limits_{k = 0}^{{N_t}} {{\phi _{s,k}}\,\int_0^{t_{\tau ,{N_t},j}^{(\alpha )}} {{}_{l,\tau }\mathcal{L} _{{N_x},{N_t},s,k}^{(\alpha )}(x_{l,{N_x},i}^{(\alpha )},t)\,dt} } } \nonumber\\
 &= \sum\limits_{k = 0}^{{N_t}} {\hat p_{\tau ,j,k}^{(1)}\,{\phi _{k,i}}} ,\quad i = 0, \ldots ,{N_x};j = 0, \ldots ,{N_t}.
\end{align}
To determine the bivariate discrete shifted Gegenbauer transform, we can first determine the intermediate values, ${{\bar \phi }_n}\left( {t_{\tau ,{N_t},k}^{(\alpha )}} \right), n = 0, \ldots ,{N_x};k = 0, \ldots ,{N_t}$, in the x-direction such that
\begin{equation}
	{{\bar \phi }_n}\left( {t_{\tau ,{N_t},k}^{(\alpha )}} \right) = \frac{1}{{\left\| {C_{l,n}^{(\alpha )}} \right\|_{w_l^{(\alpha )}}^2}}\,\sum\limits_{s = 0}^{{N_x}} {{\phi _{s,k}}\,C_{l,n}^{(\alpha )}(x_{l,{N_x},s}^{(\alpha )})\,\varpi _{l,{N_x},s}^{(\alpha )}} ,\quad n = 0, \ldots ,{N_x};k = 0, \ldots ,{N_t}.
\end{equation}
Therefore,
\begin{equation}\label{sec:theshi2:eq:disccoeffkimo1}
{{\tilde \phi }_{n,m}} = \frac{1}{{\left\| {_{l,\tau }C_{n,m}^{(\alpha )}} \right\|_{w_{l,\tau }^{(\alpha )}}^2}}{\mkern 1mu} \sum\limits_{s = 0}^{{N_x}} {\sum\limits_{k = 0}^{{N_t}} {_{l,\tau }\varpi _{{N_x},{N_t},s,k}^{(\alpha )}{\mkern 1mu} {\phi _{s,k}}{\,_{l,\tau }}C_{n,m}^{(\alpha )}\left(x_{l,{N_x},s}^{(\alpha )},t_{\tau ,{N_t},k}^{(\alpha )}\right)} } ,\quad n = 0, \ldots ,{N_x};m = 0, \ldots ,{N_t},
\end{equation}
where
\begin{equation}
	{}_{l,\tau }\varpi _{{N_x},{N_t},s,k}^{(\alpha )} = \varpi _{l,{N_x},s}^{(\alpha )}\,\varpi _{\tau ,{N_t},k}^{(\alpha )},\quad \quad s = 0, \ldots ,{N_x};k = 0, \ldots ,{N_t},
\end{equation}
are the two-dimensional Christoffel numbers corresponding to the SGG nodes, $\left(x_{l,N_x,i}^{(\alpha)}, t_{\tau,N_t,j}^{(\alpha)}\right), i = 0, \ldots, N_x; j = 0, \ldots, N_t$. To find the equations for the grid point values $\phi_{i,j}$, we require that $\phi$ satisfies the integral formulation of the hyperbolic telegraph PDE \eqref{sec:theshi2:eq:intpdek1} at the interior SGG nodes such that
\begin{equation}\label{sec:theshi2:hyppde2}
{}_{l,\tau,}\Omega _{i,j }\,{\phi _{i,j}} = {}_{l,\tau}\Psi _{i,j},\quad i = 0, \ldots ,{N_x};j = 0, \ldots ,{N_t},
\end{equation}
where
\begin{gather}
{}_{l,\tau,}\Omega _{i,j }\,{\phi _{i,j}} = {J_{l,x_{l,{N_x},i}^{(\alpha )}}}\,\phi \left( {x,t_{\tau ,{N_t},j}^{(\alpha )}} \right) - I_{2,t_{\tau ,{N_t},j}^{(\alpha )}}^{(t)}\,\phi \left( {x_{l,{N_x},i}^{(\alpha )},t} \right) + \left( {\beta_1 \,I_{1,t_{\tau ,{N_t},j}^{(\alpha )}}^{(t)}\,{J_{l,x_{l,{N_x},i}^{(\alpha )}}} + \beta_2 \,I_{2,t_{\tau ,{N_t},j}^{(\alpha )}}^{(t)}\,{J_{l,x_{l,{N_x},i}^{(\alpha )}}}} \right)\phi (x,t),\nonumber\\
\hspace{11.1cm} i = 0, \ldots ,{N_x};j = 0, \ldots ,{N_t},\\
{}_{l,\tau}\Psi _{i,j} = {{\hat \psi }_{i,j}} - \left( {\beta_1 \,I_{1,t_{\tau ,{N_t},j}^{(\alpha )}}^{(t)} + \beta_2 \,I_{2,t_{\tau ,{N_t},j}^{(\alpha )}}^{(t)}} \right)\,\psi \left( {x_{l,{N_x},i}^{(\alpha )},t} \right) + I_{2,t_{\tau ,{N_t},j}^{(\alpha )}}^{(t)}{f\left( {x_{l,{N_x},i}^{(\alpha )},t} \right)},\quad i = 0, \ldots ,{N_x};j = 0, \ldots ,{N_t},\\
	{{\hat \psi }_{i,j}} = \hat \psi \left( {x_{l,{N_x},i}^{(\alpha )},t_{\tau ,{N_t},j}^{(\alpha )}} \right),\quad i = 0, \ldots ,{N_x};j = 0, \ldots ,{N_t}.
\end{gather}
Let
\begin{equation}
	x_{l,{N_x},{N_x} + 1}^{(\alpha )} = l;
	\end{equation}
\begin{equation}
{\psi _{i,(j,k)}} = \psi \left( {x_{l,{N_x},i}^{(\alpha )},z_{\tau ,{M_t},j,k}^{(\alpha _i^*)}} \right),\quad i = 0, \ldots ,{N_x};j = 0, \ldots ,{N_t};k = 0, \ldots ,{M_t},
\end{equation}
for some $M_t \in \mathbb{Z}^+$. We can approximate the terms in Equation \eqref{sec:theshi2:hyppde2} using the S-quadrature and the optimal S-quadrature as follows:
\begin{subequations}\label{sec:theshi2:subeqs1}
\begin{align}
	{J_{l,x_{l,{N_x},i}^{(\alpha )}}}\,\phi \left( {x,t_{\tau ,{N_t},j}^{(\alpha )}} \right) &\approx {{\tilde J}_{l,x_{l,{N_x},i}^{(\alpha )}}}\,\phi \left( {x,t_{\tau ,{N_t},j}^{(\alpha )}} \right) = \left( {\sum\limits_{k = 0}^{{N_x}} {\hat p_{l,i,k}^{(2)}}  - {\theta _{l,x_{l,{N_x},i}^{(\alpha )}}}\,\sum\limits_{k = 0}^{{N_x}} {\hat p_{l,{N_x} + 1,k}^{(2)}} } \right)\,{\phi _{k,j}},\\
	I_{2,t_{\tau ,{N_t},j}^{(\alpha )}}^{(t)}\,\phi \left( {x_{l,{N_x},i}^{(\alpha )},t} \right) &\approx \tilde I_{2,t_{\tau ,{N_t},j}^{(\alpha )}}^{(t)}\,\phi \left( {x_{l,{N_x},i}^{(\alpha )},t} \right) = \sum\limits_{k = 0}^{{N_t}} {\hat p_{\tau ,j,k}^{(2)}\,{\phi _{i,k}}} ,\\
I_{1,t_{\tau ,{N_t},j}^{(\alpha )}}^{(t)}\,{J_{l,x_{l,{N_x},i}^{(\alpha )}}}\,\phi (x,t) &\approx \tilde I_{1,t_{\tau ,{N_t},j}^{(\alpha )}}^{(t)}\,{\tilde J_{l,x_{l,{N_x},i}^{(\alpha )}}}\,\phi (x,t) = \sum\limits_{k = 0}^{{N_t}} {\hat p_{\tau ,j,k}^{(1)}\,\left( {\sum\limits_{s = 0}^{{N_x}} {\hat p_{l,i,s}^{(2)}}  - {\theta _{l,x_{l,{N_x},i}^{(\alpha )}}}\,\sum\limits_{s = 0}^{{N_x}} {\hat p_{l,{N_x} + 1,s}^{(2)}} } \right)\,{\phi _{s,k}}},\\
I_{2,t_{\tau ,{N_t},j}^{(\alpha )}}^{(t)}\,{J_{l,x_{l,{N_x},i}^{(\alpha )}}}\,\phi (x,t) &\approx \tilde I_{2,t_{\tau ,{N_t},j}^{(\alpha )}}^{(t)}\,{\tilde J_{l,x_{l,{N_x},i}^{(\alpha )}}}\,\phi (x,t) = \sum\limits_{k = 0}^{{N_t}} {\hat p_{\tau ,j,k}^{(2)}\,\left( {\sum\limits_{s = 0}^{{N_x}} {\hat p_{l,i,s}^{(2)}}  - {\theta _{l,x_{l,{N_x},i}^{(\alpha )}}}\,\sum\limits_{s = 0}^{{N_x}} {\hat p_{l,{N_x} + 1,s}^{(2)}} } \right)\,{\phi _{s,k}}} ,\\
I_{1,t_{\tau ,{N_t},j}^{(\alpha )}}^{(t)}\,\psi \left( {x_{l,{N_x},i}^{(\alpha )},t} \right) &\approx \tilde I_{1,t_{\tau ,{N_t},j}^{(\alpha )}}^{(t)}\,\psi \left( {x_{l,{N_x},i}^{(\alpha )},t} \right) = \sum\limits_{k = 0}^{{M_t}} {p_{\tau ,j,k}^{(1)}\,{\psi _{i,(j,k)}}} ,\\
I_{2,t_{\tau ,{N_t},j}^{(\alpha )}}^{(t)}\,\psi \left( {x_{l,{N_x},i}^{(\alpha )},t} \right) &\approx \tilde I_{2,t_{\tau ,{N_t},j}^{(\alpha )}}^{(t)}\,\psi \left( {x_{l,{N_x},i}^{(\alpha )},t} \right) = \sum\limits_{k = 0}^{{M_t}} {p_{\tau ,j,k}^{(2)}\,{\psi _{i,(j,k)}}} ;\\
I_{2,t_{\tau ,{N_t},j}^{(\alpha )}}^{(t)}f\left( {x_{l,{N_x},i}^{(\alpha )},t} \right) &\approx \tilde I_{2,t_{\tau ,{N_t},j}^{(\alpha )}}^{(t)}f\left( {x_{l,{N_x},i}^{(\alpha )},t} \right) = \sum\limits_{k = 0}^{{M_t}} {p_{\tau ,j,k}^{(2)}\,{f_{i,(j,k)}}} .
\end{align}
\end{subequations}

Hence the discrete analogue of Equations \eqref{sec:theshi2:hyppde2} can be written as:
\begin{align}
	&{{\tilde J}_{l,x_{l,{N_x},i}^{(\alpha )}}}\,\phi \left( {x,t_{\tau ,{N_t},j}^{(\alpha )}} \right) - \tilde I_{2,t_{\tau ,{N_t},j}^{(\alpha )}}^{(t)}\,\phi \left( {x_{l,{N_x},i}^{(\alpha )},t} \right) + \left( {\beta_1 \,\tilde I_{1,t_{\tau ,{N_t},j}^{(\alpha )}}^{(t)}\,{{\tilde J}_{l,x_{l,{N_x},i}^{(\alpha )}}} + \beta_2 \,\tilde I_{2,t_{\tau ,{N_t},j}^{(\alpha )}}^{(t)}\,{{\tilde J}_{l,x_{l,{N_x},i}^{(\alpha )}}}} \right)\phi (x,t) = {{\hat \psi }_{i,j}}\nonumber\\
& - \left( {\beta_1 \,\tilde I_{1,t_{\tau ,{N_t},j}^{(\alpha )}}^{(t)} + \beta_2 \,\tilde I_{2,t_{\tau ,{N_t},j}^{(\alpha )}}^{(t)}} \right)\,\psi \left( {x_{l,{N_x},i}^{(\alpha )},t} \right) + \tilde I_{2,t_{\tau ,{N_t},j}^{(\alpha )}}^{(t)}f\left( {x_{l,{N_x},i}^{(\alpha )},t} \right),
\end{align}
or simply in shorthand notation as:
\begin{equation}\label{sec:theshi2:eq:phisol1}
{}_{l,\tau}\Omega _{i,j }^{{N_x},{N_t}}\,{\phi _{i,j}} = {}_{l,\tau}\Psi _{i,j}^{{M_t}},\quad i = 0, \ldots ,{N_x};j = 0, \ldots ,{N_t}.
\end{equation}
The solution of the linear system \eqref{sec:theshi2:eq:phisol1} provides the values of $\phi _{i,j}, i = 0, \ldots ,{N_x};j = 0, \ldots ,{N_t},$ at the
SGG nodes, $\left(x_{l,N_x,i}^{(\alpha)}, t_{\tau,N_t,j}^{(\alpha)}\right), i = 0, \ldots, N_x; j = 0, \ldots, N_t$; hence the discrete coefficients $\tilde \phi_{n,m}, i = 0, \ldots, N_x; j = 0, \ldots, N_t$, from Equation \eqref{sec:theshi2:eq:disccoeffkimo1}. Notice that the coefficient matrix, ${}_{l,\tau}\Omega _{i,j }^{{N_x},{N_t}}$, of the linear system \eqref{sec:theshi2:eq:phisol1} generated by the SGPM is full, but in compensation, the high order of the basis functions gives high accuracy for given $N_x, N_t; M_t$.

Using Equations \eqref{sec:theshi2:eq:unknsol1} and \eqref{sec:theshi2:eq:modlag1}, we can approximate the unknown solution $u$ by the bivariate shifted Gegenbauer interpolant, ${P_{{N_x},{N_t}}}u$, as follows:
\begin{equation}\label{sec:theshi2:eq:solentirext1}
u(x,t) \approx {P_{{N_x},{N_t}}}u(x,t) = \sum\limits_{n = 0}^{{N_x}} {\sum\limits_{m = 0}^{{N_t}} {{{\tilde \phi }_{n,m}}{\mkern 1mu} {J_{l,x}}\; {}_{l,\tau }C_{n,m}^{(\alpha )}(x,t)} }  + \psi (x,t).
\end{equation}
Hence the approximate values of the unknown solution $u$ can be determined at the SGG nodes, $\left(x_{l,N_x,i}^{(\alpha)}, t_{\tau,N_t,j}^{(\alpha)}\right)$, $i = 0, \ldots, N_x; j = 0, \ldots, N_t$, through the following equations:
\begin{align}
u\left( {x_{l,{N_x},i}^{(\alpha )},t_{\tau ,{N_t},j}^{(\alpha )}} \right) &\approx {P_{{N_x},{N_t}}}u\left( {x_{l,{N_x},i}^{(\alpha )},t_{\tau ,{N_t},j}^{(\alpha )}} \right) = \left( {\sum\limits_{j = 0}^{{N_x}} {\hat p_{l,i,j}^{(2)}}  - {\theta _{l,x_{l,{N_x},i}^{(\alpha )}}}\,\sum\limits_{j = 0}^{{N_x}} {\hat p_{l,{N_x} + 1,j}^{(2)}} } \right)\,{\phi _{i,j}} + {\psi _{i,j}}\nonumber\\
 &= \sum\limits_{n = 0}^{{N_x}} {\sum\limits_{m = 0}^{{N_t}} {{{\tilde \phi }_{n,m}}\,C_{\tau ,m}^{(\alpha )}(t_{\tau ,{N_t},j}^{(\alpha )})\,\left( {\sum\limits_{k = 0}^{{N_x}} {\hat p_{l,i,k}^{(2)}}  - {\theta _{l,x_{l,{N_x},i}^{(\alpha )}}}\,\sum\limits_{k = 0}^{{N_x}} {\hat p_{l,{N_x} + 1,k}^{(2)}} } \right)\,C_{l,n}^{(\alpha )}(x_{l,{N_x},k}^{(\alpha )})} }  + {\psi _{i,j}},\nonumber\\
 &{}\hspace{9cm} i = 0, \ldots ,{N_x}; j = 0, \ldots ,{N_t} \label{sec:theshi2:eq:phisol12},
\end{align}
where
\begin{equation}
	{\psi _{i,j}} = \psi \left( {x_{l,{N_x},i}^{(\alpha )},t_{\tau ,{N_t},j}^{(\alpha )}} \right),\quad i = 0, \ldots ,{N_x};j = 0, \ldots ,{N_t}.
\end{equation}
Since Formula \eqref{subsec:err:eq:squadki1} is exact for all polynomials $h_n(x) \in \mathbb{P}_n$, Equations \eqref{sec:theshi2:eq:phisol12} provide exact formulae for the bivariate shifted Gegenbauer interpolant, ${P_{{N_x},{N_t}}}u$, at the SGG nodes, $\left(x_{l,N_x,i}^{(\alpha)}, t_{\tau,N_t,j}^{(\alpha)}\right), i = 0, \ldots, N_x; j = 0, \ldots, N_t$.
\subsection{Global Collocation Matrix and Right Hand Side Constructions for Solving the Collocation Equations}
\label{subsec:mrhs11}
To put the pointwise representation of the linear system \eqref{sec:theshi2:eq:phisol1} into the standard matrix system form $\mathbf{A x} = \mathbf{b}$, we introduce the mapping $n = \text{index}(i, j): n = i + j\, (N_x + 1)$. Thus the matrix elements of the global collocation matrix $\mathbf{A}$ can be calculated by the following equations:
\begin{subequations}\label{eq:formmatcons11}
\begin{align}
{\mathbf{A}_{{\text{index}}\left( {i,j} \right),{\text{index}}\left( {s,k} \right)}} &= \left( {\hat p_{l,i,s}^{(2)} - {\theta _{l,x_{l,{N_x},i}^{(\alpha )}}}\hat p_{l,{N_x} + 1,s}^{(2)}} \right)\,\left( {{\beta _1}\hat p_{\tau ,j,k}^{(1)} + {\beta _2}\hat p_{\tau ,j,k}^{(2)}} \right),\;s = 0, \ldots ,{N_x};k = 0, \ldots ,{N_t};s \ne i,k \ne j,\\
{\mathbf{A}_{{\text{index}}\left( {i,j} \right),{\text{index}}\left( {k,j} \right)}} &= \left( {\hat p_{l,i,k}^{(2)} - {\theta _{l,x_{l,{N_x},i}^{(\alpha )}}}\,\hat p_{l,{N_x} + 1,k}^{(2)}} \right)\,\left( {{\beta _1}\,\hat p_{\tau ,j,j}^{(1)} + {\beta _2}\,\hat p_{\tau ,j,j}^{(2)} + 1} \right),\;k = 0, \ldots ,{N_x};k \ne i,\\
{\mathbf{A}_{{\text{index}}\left( {i,j} \right),{\text{index}}\left( {i,k} \right)}} &= \left( {\hat p_{l,i,i}^{(2)} - {\theta _{l,x_{l,{N_x},i}^{(\alpha )}}}\,\hat p_{l,{N_x} + 1,i}^{(2)}} \right)\,\left( {{\beta _1}\,\hat p_{\tau ,j,k}^{(1)} + {\beta _2}\,\hat p_{\tau ,j,k}^{(2)}} \right) - \hat p_{\tau ,j,k}^{(2)},\;k = 0, \ldots ,{N_t};k \ne j;\\
{\mathbf{A}_{{\text{index}}\left( {i,j} \right),{\text{index}}\left( {i,j} \right)}} &= \left( {\hat p_{l,i,i}^{(2)} - {\theta _{l,x_{l,{N_x},i}^{(\alpha )}}}\,\hat p_{l,{N_x} + 1,i}^{(2)}} \right)\,\left( {{\beta _1}\,\hat p_{\tau ,j,j}^{(1)} + {\beta _2}\,\hat p_{\tau ,j,j}^{(2)}} + 1 \right) - \hat p_{\tau ,j,j}^{(2)}.
\end{align}
\end{subequations}
Algorithm \ref{sec:theshi2:alg1matrix} implements these formulas, and computes the right hand side of the linear system \eqref{sec:theshi2:eq:phisol1} by arranging the two-dimensional array $(\text{RHS})_{i,j} = \Psi _{l,\tau ,i,j}^{{M_t}}$, in the form of a vector array, $\{(\text{RHS})_n\}_{n=0}^L$. Clearly, the construction of the global collocation matrix $\mathbf{A}$ requires the storage of its $(L + 1) \times (L + 1)$ elements, where $L = N_x + N_t + N_x N_t$. It can be shown that the construction of the matrix $\mathbf{A}$ requires exactly \[
1 + \left( {1 + {N_x}} \right)\left( {1 + 5{{\left( {1 + {N_t}} \right)}^2}\left( {1 + {N_x}} \right)} \right),\]
 multiplications and divisions, and
\[2 + \left( {1 + {N_t}} \right)\left( {1 + {N_x}} \right)\left( {6 + 5{N_x} + {N_t}\left( {5 + 4{N_x}} \right)} \right),\]
 additions and subtractions, for a total number of
\[N_t^2\left( {{N_x} + 1} \right)\left( {9{N_x} + 10} \right) + {N_t}\left( {{N_x} + 1} \right)\left( {19{N_x} + 21} \right) + 2{N_x}\left( {5{N_x} + 11} \right) + 15,\]
flops. On the other hand, the construction of the right hand side requires
\[\left( {5 + 3{M_t}} \right)\left( {1 + {N_t}} \right)\left( {1 + {N_x}} \right),\]
multiplications, and $3$ additions and subtractions, for a total number of
\[3 + \left( {5 + 3{M_t}} \right)\left( {1 + {N_t}} \right)\left( {1 + {N_x}} \right),\]
flops. Hence, the total computational cost (TCC) of Algorithm \ref{sec:theshi2:alg1matrix} is given by
\begin{align*}
	\text{TCC} &= 3{M_t}\left( {{N_t} + 1} \right)\left( {{N_x} + 1} \right) + N_t^2\left( {{N_x} + 1} \right)\left( {9{N_x} + 10} \right) + {N_t}\left( {{N_x} + 1} \right)\left( {19{N_x} + 26} \right) + {N_x}\left( {10{N_x} + 27} \right) + 23\\
	&= O\left( {{N_x}{N_t}({N_x}{N_t} + {M_t})} \right),\quad \text{as } {N_x},{N_t},{M_t} \to \infty.
\end{align*}
For systems of small or moderate size, the solution by a direct solver is easy to implement; however, for large grids, the storage requirement of the global collocation matrix elements could be prohibitive, making fast iterative solvers more appropriate. 
Fortunately, the present numerical scheme converges exponentially fast for sufficiently smooth solutions using relatively small number of grids as we show later in Sections \ref{sec:conerr1} and \ref{sec:numerical}. In Section \ref{sec:numerical} we show also through a numerical example that the time complexity required for the calculation of the approximate solution ${P_{{N_x},{N_t}}}u$, at the collocation points $\left(x_{l,N_x,i}^{(\alpha)}, t_{\tau,N_t,j}^{(\alpha)}\right), i = 0, \ldots, N_x; j = 0, \ldots, N_t$, using a direct solver implementing Algorithm \ref{sec:theshi2:alg1matrix} is approximately of $O\left((L + 1)^{2}\right)$, as $L \to \infty$, for relatively small values of $M_t$.
\subsection{Global Approximate Solution Over the Whole Solution Domain}
\label{subsec:GAS1}
To approximate the unknown solution $u$ at any point $(x, t) \in D_{l,\tau }^2$, through Equation \eqref{sec:theshi2:eq:solentirext1}, we need to calculate the integrals ${{J_{l,x}}\,C_{l,n}^{(\alpha )}(x)}, n = 0, \ldots, N_x$. Integrating Equations (A.11) in \cite{Elgindy2013} on $[-1, 1]$, yield the following equations:
\begin{align}
	\int_{ - 1}^x {\int_{ - 1}^{{\sigma _1}} {C_0^{(\alpha )}({\sigma _0})\,d{\sigma _0}\,d{\sigma _1}} }  &= \frac{1}{2}{\left( {1 + x} \right)^2} = C_0^{(\alpha )}(x) + C_1^{(\alpha )}(x) + d_1^{(\alpha )}\,\left( {C_2^{(\alpha )}(x) - C_0^{(\alpha )}(x)} \right),\\
	\int_{ - 1}^x {\int_{ - 1}^{{\sigma _1}} {C_1^{(\alpha )}({\sigma _0})\,d{\sigma _0}\,d{\sigma _1}} } &= \frac{1}{6}\left( { - 2 + x} \right){\left( {1 + x} \right)^2} = \frac{1}{{12\,{{(\alpha  + 1)}_2}}}\,\left( - 3\,(\alpha  + 2)\,(2\alpha  + 1)\,C_0^{(\alpha )}(x) \right.\nonumber\\
	&\left. - 3\,(\alpha  + 1)\,(2\alpha  + 3)\,C_1^{(\alpha )}(x) + (\alpha  + 1)\,(2\alpha  + 1)\,C_3^{(\alpha )}(x) + (\alpha  + 2)\,(2\alpha  - 1) \right),
\end{align}
	\begin{empheq}[left={\int_{ - 1}^x {\int_{ - 1}^{{\sigma _1}} {C_j^{(\alpha )}({\sigma _0})\,d{\sigma _0}\,d{\sigma _1}} } =}\empheqlbrace]{align}
	&\frac{1}{6}\left( { - 2 + x} \right)x{\left( {1 + x} \right)^2},\quad j = 2 \wedge \alpha  = 0,\nonumber\\
	&\frac{1}{{4\,(\alpha  + j)}}\,\left( {\frac{1}{{\alpha  + j + 1}}\left( {d_{2,j}^{(\alpha )}\,C_{j + 2}^{(\alpha )}(x) + d_{3,j}^{(\alpha )}\,C_j^{(\alpha )}(x)} \right) + d_{4,j}^{(\alpha )}\,C_{j - 2}^{(\alpha )}(x) + \Im _j^{(\alpha )}(x)} \right),\nonumber\\
	&j \ge 2:j \ne 2 \vee \alpha  \ne 0\nonumber
	\end{empheq}
\begin{empheq}[left={\hspace{3.9cm}=}\empheqlbrace]{align}
&\frac{1}{{48}}\left( { - 9\,{T_0}(x) - 8\left( {2\,{T_1}(x) + {T_2}(x)} \right) + {T_4}(x)} \right),\quad j = 2 \wedge \alpha  = 0,\\
&\frac{1}{{4\,(\alpha  + j)}}\,\left( {\frac{1}{{\alpha  + j + 1}}\left( {d_{2,j}^{(\alpha )}\,C_{j + 2}^{(\alpha )}(x) + d_{3,j}^{(\alpha )}\,C_j^{(\alpha )}(x)} \right) + d_{4,j}^{(\alpha )}\,C_{j - 2}^{(\alpha )}(x) + \Im _j^{(\alpha )}(x)} \right),\nonumber\\
&j \ge 2:j \ne 2 \vee \alpha  \ne 0,\nonumber
	\end{empheq}

\vspace{-1.5cm}
where
\begin{subequations}
\begin{gather}
	d_1^{(\alpha)} = \frac{{1 + 2\alpha }}{{4\,(1 + \alpha )}},\\
	d_{2,j}^{(\alpha )} = \frac{{{{(2\alpha  + j)}_2}}}{{{{(j + 1)}_2}}},\\
	d_{3,j}^{(\alpha )} =  - \frac{{2\,(\alpha  + j)}}{{\alpha  + j - 1}},\\
	d_{4,j}^{(\alpha )} = \frac{{{{(j - 1)}_2}}}{{(\alpha  + j - 1)\,{{(2\alpha  + j - 2)}_2}}},\,\,j \ne 2 \vee \alpha  \ne 0,\\
\Im _j^{(\alpha )}(x) = \frac{{4\,{{( - 1)}^j}\,(2\alpha  - 1)\,(\alpha  + j)}}{{{{(j + 1)}_2}\,{{(2\alpha  + j - 2)}_2}}}\,\left( {({j^2} + 2\alpha \,(j + 2) - 4)\,x + {j^2} + 2\alpha \,(j + 1) - 1} \right),
\end{gather}
\end{subequations}
and ${(x)_n} = \Gamma (x + n)/\Gamma (x) = x\,(x + 1)\, \ldots \,(x + n - 1)$, is the Pochhammer symbol (rising factorial). Therefore, the double integrals of $C_{l,j}^{(\alpha)}(x), j = 0, 1, \ldots$, on $[0, l]$, can be calculated as follows:
\begin{align}
\int_0^x {\int_0^{{\sigma _1}} {C_{l,0}^{(\alpha )}({\sigma _0})\,d{\sigma _0}\,d{\sigma _1}} }  &= \frac{1}{2}{x^2} = \frac{{{l^2}}}{4}\left( {C_{l,0}^{(\alpha )}(x) + C_{l,1}^{(\alpha )}(x) + d_1^{(\alpha )}\,\left( {C_{l,2}^{(\alpha )}(x) - C_{l,0}^{(\alpha )}(x)} \right)} \right),\\
\int_0^x {\int_0^{{\sigma _1}} {C_{l,1}^{(\alpha )}({\sigma _0})\,d{\sigma _0}\,d{\sigma _1}} }  &= \frac{1}{{6l}}{x^2}\,( - 3\,l + 2x) = \frac{{{l^2}}}{{48\,{{(\alpha  + 1)}_2}}}\,\left( - 3\,(\alpha  + 2)\,(2\alpha  + 1)\,C_{l,0}^{(\alpha )}(x) - 3\,(\alpha  + 1)\,(2\alpha  + 3)\,C_{l,1}^{(\alpha )}(x)\right.\nonumber\\
&\left. + (\alpha  + 1)\,(2\alpha  + 1)\,C_{l,3}^{(\alpha )}(x) + (\alpha  + 2)\,(2\alpha  - 1) \right),
\end{align}
\begin{empheq}[left={\int_0^x {\int_0^{{\sigma _1}} {C_{l,j}^{(\alpha )}({\sigma _0})\,d{\sigma _0}\,d{\sigma _1}} } =}\empheqlbrace]{align}
&\frac{1}{{6{l^2}}}\left( {l - 2x} \right)\left( {3l - 2x} \right){x^2},\quad j = 2 \wedge \alpha  = 0,\nonumber\\
&\frac{{{l^2}}}{{16\,(\alpha  + j)}}\,\left( {\frac{1}{{\alpha  + j + 1}}\left( {d_{2,j}^{(\alpha )}\,C_{l,j + 2}^{(\alpha )}(x) + d_{3,j}^{(\alpha )}\,C_{l,j}^{(\alpha )}(x)} \right) + d_{4,j}^{(\alpha )}\,C_{l,j - 2}^{(\alpha )}(x) + \Im _{l,j}^{(\alpha )}(x)} \right),\nonumber\\
&j \ge 2:j \ne 2 \vee \alpha  \ne 0\nonumber
\end{empheq}
\begin{empheq}[left={\hspace{3.8cm}=}\empheqlbrace]{align}
&\frac{{{l^2}}}{{192}}\left( { - 9\,{T_{l,0}}(x) - 8\left( {2\,{T_{l,1}}(x) + {T_{l,2}}(x)} \right) + {T_{l,4}}(x)} \right),\quad j = 2 \wedge \alpha  = 0,\\
&\frac{{{l^2}}}{{16\,(\alpha  + j)}}\,\left( {\frac{1}{{\alpha  + j + 1}}\left( {d_{2,j}^{(\alpha )}\,C_{l,j + 2}^{(\alpha )}(x) + d_{3,j}^{(\alpha )}\,C_{l,j}^{(\alpha )}(x)} \right) + d_{4,j}^{(\alpha )}\,C_{l,j - 2}^{(\alpha )}(x) + \Im _{l,j}^{(\alpha )}(x)} \right),\nonumber\\
&j \ge 2:j \ne 2 \vee \alpha  \ne 0,\nonumber
\end{empheq}

where $T_{l,n}(x)$, is the $n$th-degree shifted Chebyshev polynomial of the first kind, and $\Im _{l,j}^{(\alpha )}(x) = \Im _j^{(\alpha )}(2x/l - 1)\,\forall j$. Hence,
\begin{align}
	{J_{l,x}}C_{l,0}^{(\alpha )}(x) &= \frac{1}{2}x\,(x - l) = \frac{{{l^2}}}{4}d_1^{(\alpha )}\left( {C_{l,2}^{(\alpha )}(x) - C_{l,0}^{(\alpha )}(x)} \right),\\
	{J_{l,x}}C_{l,1}^{(\alpha )}(x) &= \frac{1}{{6l}}\,(l - 2x)\,(l - x)\,x = - \frac{{{l^2}\left( {1 + 2\alpha } \right)}}{{48\left( {2 + \alpha } \right)}}\,\left( {C_{l,1}^{(\alpha )}(x) - C_{l,3}^{(\alpha )}(x)} \right),
	\end{align}
\begin{empheq}[left={{J_{l,x}}C_{l,j}^{(\alpha )}(x) =}\empheqlbrace]{align}
&\frac{1}{{6{l^2}}}\left( {l - x} \right)x\left( {{l^2} + 4lx - 4{x^2}} \right),\quad j = 2 \wedge \alpha  = 0,\nonumber\\
&\nu _{1,l,j}^{(\alpha )}\,\left( {{\nu _{2,j}}\,C_{l,j - 2}^{(\alpha )}(x) - \nu _{3,j}^{(\alpha )}\left( {\nu _{4,j}^{(\alpha )}\left( {\nu _{5,j}^{(\alpha )}\,C_{l,j}^{(\alpha )}(x) + \nu _{6,j}^{(\alpha )}\,C_{l,j + 2}^{(\alpha )}(x)} \right) + \wp _{l,j}^{(\alpha )}(x)} \right)} \right),\nonumber\\
&j \ge 2:j \ne 2 \vee \alpha  \ne 0\nonumber
\end{empheq}
\begin{empheq}[left={\hspace{2.18cm}=}\empheqlbrace]{align}
&\frac{{{l^2}}}{{192}}\left( { - 9\,{T_{l,0}}(x) - 8\,{T_{l,2}}(x) + {T_{l,4}}(x) + 16} \right),\quad j = 2 \wedge \alpha  = 0,\\
&\nu _{1,l,j}^{(\alpha )}\,\left( {{\nu _{2,j}}\,C_{l,j - 2}^{(\alpha )}(x) - \nu _{3,j}^{(\alpha )}\left( {\nu _{4,j}^{(\alpha )}\left( {\nu _{5,j}^{(\alpha )}\,C_{l,j}^{(\alpha )}(x) + \nu _{6,j}^{(\alpha )}\,C_{l,j + 2}^{(\alpha )}(x)} \right) + \wp _{l,j}^{(\alpha )}(x)} \right)} \right),\nonumber\\
&j \ge 2:j \ne 2 \vee \alpha  \ne 0,\nonumber
\end{empheq}

where
\begin{subequations}
\begin{gather}
	\nu _{1,l,j}^{(\alpha )} = \frac{{{l^2}}}{{16\,{{(\alpha  + j - 1)}_2}\,{{(2\alpha  + j - 2)}_2}}},\\
	{\nu _{2,j}} = {(j - 1)_2},\\
	\nu _{3,j}^{(\alpha )} = \frac{1}{{{{(j + 1)}_2}\,(\alpha  + j + 1)}},\\
	\nu _{4,j}^{(\alpha )} = {(2\alpha  + j - 2)_2},\\
\nu _{5,j}^{(\alpha )} = 2\,{(j + 1)_2}\,(\alpha  + j),\\
\nu _{6,j}^{(\alpha )} =  - (\alpha  + j - 1)\,{(2\alpha  + j)_2};\\
\wp _{l,j}^{(\alpha )}(x) = \frac{4}{l}\,\left( {4\,(\alpha  - 2)\alpha  + 3} \right)\,{\left( {\alpha  + j - 1} \right)_3}\,\left( {{{( - 1)}^j}\,(l - x) + x} \right).
\end{gather}
\end{subequations}
Using Equation \eqref{sec:pre:eq:hyp1}, we can also show without stating the proof that
\begin{align}
{J_{l,x}}C_{l,n}^{(\alpha )}(x) = \frac{{\left( {4\,\left( {\alpha  - 2} \right)\alpha  + 3} \right)\,l}}{{4\,{{(j + 1)}_2}\,{{(2\alpha  + j - 2)}_2}}}\,\left( {l\,{_2}{F_1}\left( { - j - 2,j + 2\alpha  - 2;\alpha  - \frac{3}{2};1 - \frac{x}{l}} \right) + {{( - 1)}^j}\,(x - l) - x} \right),\nonumber\\
\alpha  < \frac{5}{2} \wedge \alpha  \ne \frac{1}{2},\frac{3}{2}.
\end{align}

Using the above formulae, the SGPM directly approximates the solution at any point in the range of integration; on the other hand, finite-difference schemes, for instance, must require a further step of interpolation.

\subsection{Convergence and Error Analysis}
\label{sec:conerr1}
The following theorem gives the bounds on the discrete shifted Gegenbauer coefficients $\tilde \phi_{n,m}\,\forall n,m$.
\begin{thm}\label{sec:conerr1:thm:coeffconv1}
Let $u(x,t) \in C^2\left(D_{l,\tau }^2\right)$, be the solution of Problem $\mathcal{P}$. Suppose also that $u$ is interpolated by the shifted Gegenbauer polynomials at the SGG nodes, $\left(x_{l,N_x,i}^{(\alpha)}, t_{\tau,N_t,j}^{(\alpha)}\right), i = 0, \ldots, N_x; j = 0, \ldots, N_t$, on the rectangular domain $D_{l,\tau }^2$. Then the discrete shifted Gegenbauer coefficients $\tilde \phi_{n,m}$, given by Equations \eqref{sec:theshi2:eq:disccoeffkimo1} are bounded by the following inequalities:
\begin{empheq}[left={\left| {{{\tilde \phi }_{n,m}}} \right| \le}\empheqlbrace]{align}\label{sec:conerr1:ineq4}
	&\frac{{4\,(n + \alpha )\,(m + \alpha )\,\Gamma (n + 2\alpha )\,\Gamma (m + 2\alpha )}}{{{\Gamma ^2}(2\alpha  + 1)\,\Gamma (n + 1)\,\Gamma (m + 1)}}\,{\left\| {{u_{xx}}} \right\|_{{L^\infty }\left( {D_{l,\tau }^2} \right)}},\quad \alpha  \ge 0,\\
	&\frac{{\left| {(n + \alpha )\,(m + \alpha )} \right|}}{{{\alpha ^2}}}\,\left| {\left( \begin{array}{l}
	\frac{n}{2} + \alpha  - 1\\
	\hfill \frac{n}{2} \hfill
	\end{array} \right)\,\left( \begin{array}{l}
	\frac{m}{2} + \alpha  - 1\\
	\hfill \frac{m}{2} \hfill
	\end{array} \right)} \right|\;{\left\| {{u_{xx}}} \right\|_{{L^\infty }\left( {D_{l,\tau }^2} \right)}},\quad  \frac{n}{2},\frac{m}{2} \in \mathbb{Z}_0^ + \wedge - \frac{1}{2} < \alpha  < 0 ,\nonumber\\
	&\frac{{4\,\left| {(n + \alpha )\,(m + \alpha )} \right|\,\Gamma \left( {\frac{{n + 1}}{2} + \alpha } \right)\,\Gamma \left( {\frac{{m + 1}}{2} + \alpha } \right)}}{{{\Gamma ^2}(\alpha  + 1)\,\Gamma \left( {\frac{{n + 1}}{2}} \right)\,\Gamma \left( {\frac{{m + 1}}{2}} \right)\,\sqrt[4]{{{n^2}\,{m^2}\,{{(n + 2\alpha)}^2}\,{{(m + 2\alpha)}^2}}}}}\,{\left\| {{u_{xx}}} \right\|_{{L^\infty }\left( {D_{l,\tau }^2} \right)}},\quad  \frac{n}{2},\frac{m}{2} \notin \mathbb{Z}_0^ +  \wedge - \frac{1}{2} < \alpha  < 0,\nonumber\\
	&\frac{{2\,\Gamma \left( {\frac{{m + 1}}{2} + \alpha } \right)\,\left| {(n + \alpha )\,(m + \alpha )} \right|}}{{{\alpha ^2}\,\sqrt {m\,(m + 2\alpha )} \,\Gamma \left( {\frac{{m + 1}}{2}} \right)\,\Gamma (\alpha )}}\,\left| {\left( \begin{array}{l}
	\frac{n}{2} + \alpha  - 1\\
	\hfill \frac{n}{2} \hfill
	\end{array} \right)} \right|\;{\left\| {{u_{xx}}} \right\|_{{L^\infty }\left( {D_{l,\tau }^2} \right)}},\quad \frac{n}{2} \in \mathbb{Z}_0^ +  \wedge \frac{m}{2} \notin \mathbb{Z}_0^ +  \wedge - \frac{1}{2} < \alpha  < 0,\nonumber\\
	&\frac{{2\,\Gamma \left( {\frac{{n + 1}}{2} + \alpha } \right)\,\left| {(n + \alpha )\,(m + \alpha )} \right|}}{{{\alpha ^2}\,\sqrt {n\,(n + 2\alpha )} \,\Gamma \left( {\frac{{n + 1}}{2}} \right)\,\Gamma (\alpha )}}\,\left| {\left( \begin{array}{l}
	\frac{m}{2} + \alpha  - 1\\
	\hfill \frac{m}{2} \hfill
	\end{array} \right)} \right|\;{\left\| {{u_{xx}}} \right\|_{{L^\infty }\left( {D_{l,\tau }^2} \right)}},\quad \frac{n}{2} \notin \mathbb{Z}_0^ +  \wedge \frac{m}{2} \in \mathbb{Z}_0^ +  \wedge - \frac{1}{2} < \alpha  < 0,\nonumber
\end{empheq}

\vspace{-4.5cm}
Moreover, as $n, m \to \infty$, the discrete shifted Gegenbauer coefficients $\tilde \phi_{n,m}$, are asymptotically bounded by:
\begin{subequations}
\begin{empheq}[left={\left| {{{\tilde \phi }_{n,m}}} \right| \simlt}\empheqlbrace]{align}
&D_1^{^{(\alpha )}}\,{(nm)^{2\alpha }}\,{\left\| {{u_{xx}}} \right\|_{{L^\infty }\left( {D_{l,\tau }^2} \right)}},\quad \alpha  \ge 0,\label{sec:conerr1:asy1}\\
&D_2^{^{(\alpha )}}\,{(nm)^\alpha }\,{\left\| {{u_{xx}}} \right\|_{{L^\infty }\left( {D_{l,\tau }^2} \right)}},\quad  - \frac{1}{2} < \alpha  < 0,\label{sec:conerr1:asy2}
\end{empheq}
\end{subequations}
where
\begin{subequations}\label{sec:conerr1:asy1coeffD12}
\begin{align}
	D_1^{^{(\alpha )}} &= \frac{{{\pi ^2}\,{2^{3 - 4\alpha }}\,{e^{ - 4\alpha  - 2}}}}{{{\Gamma ^2}\left( {\alpha  + \frac{1}{2}} \right)\,{\Gamma ^2}(\alpha  + 1)}},\\
	D_2^{^{(\alpha )}} &= A_1^{(\alpha)}\, A_2^{(\alpha)} \,D_1^{^{(\alpha )}},
\end{align}
\end{subequations}
for some constants $A_1^{(\alpha)}, A_2^{(\alpha)} > 1$, dependent on $\alpha$, but independent of $n$ and $m$.
\end{thm}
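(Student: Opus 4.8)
The plan is to estimate the discrete coefficients directly from their defining formula \eqref{sec:theshi2:eq:disccoeffkimo1}, using that the substitution variable is $\phi=u_{xx}$, so every sampled value satisfies $|\phi_{s,k}|=\left|u_{xx}\bigl(x_{l,N_x,s}^{(\alpha)},t_{\tau,N_t,k}^{(\alpha)}\bigr)\right|\le\|u_{xx}\|_{L^\infty(D_{l,\tau}^2)}$, a finite quantity because $u\in C^2(D_{l,\tau}^2)$. Since ${}_{l,\tau}\varpi_{N_x,N_t,s,k}^{(\alpha)}=\varpi_{l,N_x,s}^{(\alpha)}\varpi_{\tau,N_t,k}^{(\alpha)}$ and ${}_{l,\tau}C_{n,m}^{(\alpha)}(x,t)=C_{l,n}^{(\alpha)}(x)\,C_{\tau,m}^{(\alpha)}(t)$, the double sum factorizes into a product of one–dimensional sums, and the positivity of the Christoffel numbers lets me pull $\|u_{xx}\|_{L^\infty}$ out of the modulus. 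First I would therefore reduce to the one–dimensional estimate $\sum_{s=0}^{N_x}\varpi_{l,N_x,s}^{(\alpha)}\bigl|C_{l,n}^{(\alpha)}(x_{l,N_x,s}^{(\alpha)})\bigr|\le\|C_{l,n}^{(\alpha)}\|_{L^\infty[0,l]}\sum_{s=0}^{N_x}\varpi_{l,N_x,s}^{(\alpha)}$ and its $t$–analogue, divided by the denominator $\|{}_{l,\tau}C_{n,m}^{(\alpha)}\|^2=\lambda_{l,n}^{(\alpha)}\lambda_{\tau,m}^{(\alpha)}$.

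Two arithmetic facts drive the simplification: (i) the Gegenbauer–Gauss rule is exact for constants, so $\sum_{s=0}^{N_x}\varpi_{l,N_x,s}^{(\alpha)}=\int_0^l w_l^{(\alpha)}(x)\,dx=(l/2)^{2\alpha}\lambda_0^{(\alpha)}$, and likewise in $t$; and (ii) $\lambda_{l,n}^{(\alpha)}=(l/2)^{2\alpha}\lambda_n^{(\alpha)}$. Hence every power of $(l/2)^{2\alpha}$ and $(\tau/2)^{2\alpha}$ cancels between numerator and denominator, and since $\|C_{L,n}^{(\alpha)}\|_{L^\infty[0,L]}=\|C_n^{(\alpha)}\|_{L^\infty[-1,1]}$ by the shift, Lemma \ref{subsec:errbounds:lem:max1} supplies the sup–norms verbatim. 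This leaves the interval–independent estimate
\[
|\tilde\phi_{n,m}| \le \frac{\bigl(\lambda_0^{(\alpha)}\bigr)^2}{\lambda_n^{(\alpha)}\lambda_m^{(\alpha)}}\,\|C_{l,n}^{(\alpha)}\|_{L^\infty[0,l]}\,\|C_{\tau,m}^{(\alpha)}\|_{L^\infty[0,\tau]}\,\|u_{xx}\|_{L^\infty(D_{l,\tau}^2)}.
\]
I would then collapse the prefactor via the identity $\lambda_0^{(\alpha)}/\lambda_n^{(\alpha)}=(n+\alpha)\Gamma(n+2\alpha)/\bigl(\alpha\,\Gamma(2\alpha)\,n!\bigr)$, so that $\bigl(\lambda_0^{(\alpha)}\bigr)^2/\bigl(\lambda_n^{(\alpha)}\lambda_m^{(\alpha)}\bigr)$ becomes exactly $4(n+\alpha)(m+\alpha)\Gamma(n+2\alpha)\Gamma(m+2\alpha)/\bigl(\Gamma^2(2\alpha+1)\,n!\,m!\bigr)$, the common factor of the theorem.

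The five cases of the explicit bound are produced by feeding the appropriate branch of Lemma \ref{subsec:errbounds:lem:max1} into each of the two sup–norm factors: the branch $\|C_n^{(\alpha)}\|_{L^\infty}=1$ for $\alpha\ge0$ gives the first case; the even–index equality for both $C_n^{(\alpha)}$ and $C_m^{(\alpha)}$ gives the second; the odd–index inequality \eqref{subsec:errbounds:eq:linfnorm2} for both gives the third; and the two mixed even/odd combinations give the fourth and fifth. In each branch the factor $\Gamma(n+2\alpha)/n!$ carried by the sup–norm cancels against the same factor in the prefactor, leaving the stated products of $(n+\alpha),(m+\alpha)$ and binomial terms (with moduli, since $n+\alpha$ may be negative for $-1/2<\alpha<0$). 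For the asymptotic statements \eqref{sec:conerr1:asy1}–\eqref{sec:conerr1:asy2} I would insert Stirling's formula into the prefactor: $(n+\alpha)\Gamma(n+2\alpha)/n!\sim n^{2\alpha}$ yields the $(nm)^{2\alpha}$ growth for $\alpha\ge0$, while for $-1/2<\alpha<0$ multiplying by the sup–norm asymptotics $\|C_n^{(\alpha)}\|_{L^\infty}\sim A^{(\alpha)}n^{-\alpha}$ from \eqref{subsec:errbounds:eq:linfnorm22} converts $(nm)^{2\alpha}$ into $(nm)^{\alpha}$; collecting the $\alpha$–dependent constants through the duplication formula for $\Gamma(2\alpha)$ and the Stirling remainders produces $D_1^{(\alpha)}$ and $D_2^{(\alpha)}=A_1^{(\alpha)}A_2^{(\alpha)}D_1^{(\alpha)}$.

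The main obstacle is the bookkeeping in this last step rather than any conceptual difficulty: one must pair each branch of Lemma \ref{subsec:errbounds:lem:max1} with the correct parity/sign case, confirm that the $\Gamma(n+2\alpha)/n!$ factors cancel cleanly, and — most delicately — propagate all numerical constants through the duplication formula and the Stirling estimates to recover the precise constant $D_1^{(\alpha)}$. A secondary point demanding care is the standardization: the sup–norm bounds and the normalization factor $\lambda_n^{(\alpha)}$ must be taken in the \emph{same} shifted Gegenbauer convention (the one consistent with $C_n^{(\alpha)}(1)=1$, for which $\lambda_n^{(\alpha)}=2^{2\alpha-1}\Gamma^2(\alpha+\tfrac12)\,n!/\bigl((n+\alpha)\Gamma(n+2\alpha)\bigr)$, as used in the proof of Theorem~3.1), since otherwise spurious factors $\Gamma(n+2\alpha)/(n!\,\Gamma(2\alpha))$ survive and the Gamma ratios fail to collapse to the stated form.
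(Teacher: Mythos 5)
Your proposal is correct and takes essentially the same route as the paper's own proof: factorize the discrete transform into one-dimensional Christoffel-number sums, use exactness of the quadrature for constants to reduce those sums to $\bigl(\lambda_0^{(\alpha)}\bigr)^2$ (after the $(l/2)^{2\alpha}$, $(\tau/2)^{2\alpha}$ factors cancel), feed the branches of Lemma~\ref{subsec:errbounds:lem:max1} into the resulting master bound to get the five cases, and extract the asymptotic bounds by Stirling-type estimates (the paper uses Alzer's sharp gamma bounds where you use plain Stirling, with the same resulting orders $(nm)^{2\alpha}$ and $(nm)^{\alpha}$). Your closing caveat about standardization is a genuine refinement rather than a detour: Equation~\eqref{sec:pre:eq:normak1} as printed is the normalization factor for the classical (Szeg\H{o}) Gegenbauer normalization, not for the $C_n^{(\alpha)}(1)=1$ standardization the paper adopts, and only with the corrected $\lambda_n^{(\alpha)} = 2^{2\alpha-1}\,n!\,\Gamma^2\bigl(\alpha+\tfrac{1}{2}\bigr)/\bigl((n+\alpha)\,\Gamma(n+2\alpha)\bigr)$ that you invoke do the Gamma ratios collapse to the prefactor $4(n+\alpha)(m+\alpha)\Gamma(n+2\alpha)\Gamma(m+2\alpha)/\bigl(\Gamma^2(2\alpha+1)\,n!\,m!\bigr)$ appearing in the theorem.
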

\begin{proof}
Since
\begin{align*}
	{{\tilde \phi }_{n,m}} &= \frac{1}{{\left\| {{}_{l,\tau }C_{n,m}^{(\alpha )}} \right\|_{w_{l,\tau }^{(\alpha )}}^2}}\,\sum\limits_{s = 0}^{{N_x}} {\sum\limits_{k = 0}^{{N_t}} {{}_{l,\tau }\varpi _{{N_x},{N_t},s,k}^{(\alpha )}\,{\phi _{s,k}}\,{}_{l,\tau }C_{n,m}^{(\alpha )}\left( {x_{l,{N_x},s}^{(\alpha )},t_{\tau ,{N_t},k}^{(\alpha )}} \right)} }\\
 &= {\left( {\lambda _{n,m}^{(\alpha )}} \right)^{ - 1}}\,\sum\limits_{s = 0}^{{N_x}} {\sum\limits_{k = 0}^{{N_t}} {\varpi _{{N_x},{N_t},s,k}^{(\alpha )}\,{\phi _{s,k}}\,C_{n,m}^{(\alpha )}\left( {x_{{N_x},s}^{(\alpha )},t_{{N_t},k}^{(\alpha )}} \right)} },
\end{align*}
where
\begin{align}
\lambda _{n,m}^{(\alpha )} &= \lambda _n^{(\alpha )}{\mkern 1mu} \lambda _m^{(\alpha )}\, \forall n,m,\label{sec:conerr1:lambk1}\\
\varpi _{{N_x},{N_t},s,k}^{(\alpha )} &= \varpi _{{N_x},s}^{(\alpha )}\,\varpi _{{N_t},k}^{(\alpha )},\quad s = 0, \ldots ,{N_x};k = 0, \ldots ,{N_t};\\
C_{n,m}^{(\alpha )}(x,t) &= C_n^{(\alpha )}(x)\,C_m^{(\alpha )}(t)\,\forall n,m\label{sec:conerr1:cnm}.
\end{align}
Then,
\begin{equation}
\left|{{\tilde \phi }_{n,m}}\right| \le \frac{{\pi \,{\Gamma ^2}\left( {\alpha  + \frac{1}{2}} \right){{\left( {\lambda _{n,m}^{(\alpha )}} \right)}^{ - 1}}}}{{{\Gamma ^2}\left( {\alpha  + 1} \right)}}\,{\left\| {C_{n,m}^{(\alpha )}} \right\|_{{L^\infty }\left(D^2\right)}}\,{\left\| {{u_{xx}}} \right\|_{{L^\infty }\left( {D_{l,\tau }^2} \right)}},\quad n = 0, \ldots ,{N_x};m = 0, \ldots ,{N_t}, D^2 = [-1, 1] \times [-1, 1] \label{sec:conerr1:ineq:coeb1}.
\end{equation}
Hence Inequalities \eqref{sec:conerr1:ineq4} follow directly from Inequality \eqref{sec:conerr1:ineq:coeb1}, Equations \eqref{sec:pre:eq:normak1}, \eqref{sec:conerr1:lambk1}, \eqref{sec:conerr1:cnm}, and Lemma \ref{subsec:errbounds:lem:max1}. Now, since
\begin{equation}
	\Gamma (x + 1) < \sqrt {2\,\pi \,x} \,{\left( {\frac{x}{e}} \right)^x}\,{\left( {x\,\sinh \left( {\frac{1}{x}} \right)} \right)^{\frac{x}{2}}}\,\left( {1 + \frac{1}{{1620\,{x^5}}}} \right)\;\forall x > 0;
\end{equation}
cf. \cite{Horst2009}, then we can easily show that
\begin{align}
{\left( {\lambda _n^{(\alpha )}} \right)^{ - 1}}{\mkern 1mu} {\kern 1pt} \sum\limits_{s = 0}^{{N_x}} {\left| {\varpi _{{N_x},s}^{(\alpha )}{\mkern 1mu} {\kern 1pt} C_n^{(\alpha )}\left( {x_{l,{N_x},s}^{(\alpha )}} \right)} \right|}  &< \frac{{\pi \,{2^{\frac{3}{2} - 2\alpha }}\,{e^{ - n - 2\alpha  - 1}}\,(n + \alpha )}}{{n!\,\Gamma \left( {\alpha  + \frac{1}{2}} \right)\,\Gamma (\alpha  + 1)\,(n + 2\alpha )}}\,{(n + 2\alpha  + 1)^{n + 2\alpha  + \frac{1}{2}}}\,\left( {\frac{1}{{1620\,{{(n + 2\alpha  + 1)}^5}}} + 1} \right)\nonumber\\
& {\left( {(n + 2\alpha  + 1)\,\sinh \left( {\frac{1}{{n + 2\alpha  + 1}}} \right)} \right)^{\frac{{n + 1}}{2} + \alpha }}\, \forall \alpha \ge 0.
\end{align}
Hence,
\begin{align}
\left| {{{\tilde \phi }_{n,m}}} \right| &< \frac{{{\pi ^2}\,{2^{3 - 4\alpha }}\,{e^{ - n - m - 4\alpha  - 2}}\,(n + \alpha )\,(m + \alpha )}}{{n!\,m!\,{\Gamma ^2}\left( {\alpha  + \frac{1}{2}} \right)\,{\Gamma ^2}(\alpha  + 1)\,(n + 2\alpha )\,(m + 2\alpha )}}\,{(n + 2\alpha  + 1)^{n + 2\alpha  + \frac{1}{2}}}\,{(m + 2\alpha  + 1)^{m + 2\alpha  + \frac{1}{2}}}\,\nonumber\\
	& \left( {\frac{1}{{1620\,{{(n + 2\alpha  + 1)}^5}}} + 1} \right)\,\left( {\frac{1}{{1620\,{{(m + 2\alpha  + 1)}^5}}} + 1} \right)\,{\left( {(n + 2\alpha  + 1)\,\sinh \left( {\frac{1}{{n + 2\alpha  + 1}}} \right)} \right)^{\frac{{n + 1}}{2} + \alpha }}\,\nonumber\\
	& {\left( {(m + 2\alpha  + 1)\,\sinh \left( {\frac{1}{{m + 2\alpha  + 1}}} \right)} \right)^{\frac{{m + 1}}{2} + \alpha }}\,{\left\| {{u_{xx}}} \right\|_{{L^\infty }\left( {D_{l,\tau }^2} \right)}}\\
	& \sim D_1^{(\alpha )}\,{(nm)^{2\alpha }}\,{\left\| {{u_{xx}}} \right\|_{{L^\infty }\left( {D_{l,\tau }^2} \right)}}\, \forall \alpha \ge 0,\quad \text{as }n,m \to \infty,\nonumber
\end{align}
which proves the asymptotic inequality \eqref{sec:conerr1:asy1}. Similarly, we can prove the asymptotic inequality \eqref{sec:conerr1:asy2} using Equation \eqref{subsec:errbounds:eq:linfnorm22}.
\end{proof}
\vspace{-2.5cm}
The following two corollaries underline the cases when the solution $u$ of Problem $\mathcal{P}$ is a polynomial or an analytic function.
\begin{cor}
Let $r \in \mathbb{Z}^+,\, u(x,t) \in \mathbb{P}_r$, in $x$ be the solution of Problem $\mathcal{P}$. Suppose also that $u$ is interpolated by the shifted Gegenbauer polynomials at the SGG nodes, $\left(x_{l,N_x,i}^{(\alpha)}, t_{\tau,N_t,j}^{(\alpha)}\right), i = 0, \ldots, N_x; j = 0, \ldots, N_t$, on the rectangular domain $D_{l,\tau }^2$. Then there exists a positive constant $C$, such that the discrete shifted Gegenbauer coefficients $\tilde \phi_{n,m}$, given by Equations \eqref{sec:theshi2:eq:disccoeffkimo1} are bounded by the following inequalities:
\begin{empheq}[left={\left| {{{\tilde \phi }_{n,m}}} \right| \le}\empheqlbrace]{align}\label{sec:conerr1:ineq4cor1}
	&\frac{{4\, C_l\, r^4\,(n + \alpha )\,(m + \alpha )\,\Gamma (n + 2\alpha )\,\Gamma (m + 2\alpha )}}{{{\Gamma ^2}(2\alpha  + 1)\,\Gamma (n + 1)\,\Gamma (m + 1)}}\,{\left\| {{u}} \right\|_{{L^\infty }\left( {D_{l,\tau }^2} \right)}},\quad \alpha  \ge 0,\\
	&\frac{C_l\, r^4\,{\left| {(n + \alpha )\,(m + \alpha )} \right|}}{{{\alpha ^2}}}\,\left| {\left( \begin{array}{l}
	\frac{n}{2} + \alpha  - 1\\
	\hfill \frac{n}{2} \hfill
	\end{array} \right)\,\left( \begin{array}{l}
	\frac{m}{2} + \alpha  - 1\\
	\hfill \frac{m}{2} \hfill
	\end{array} \right)} \right|\;{\left\| {{u}} \right\|_{{L^\infty }\left( {D_{l,\tau }^2} \right)}},\quad  \frac{n}{2},\frac{m}{2} \in \mathbb{Z}_0^ + \wedge - \frac{1}{2} < \alpha  < 0 ,\nonumber\\
	&\frac{{4\,C_l\,r^4\,\,\left| {(n + \alpha )\,(m + \alpha )} \right|\,\Gamma \left( {\frac{{n + 1}}{2} + \alpha } \right)\,\Gamma \left( {\frac{{m + 1}}{2} + \alpha } \right)}}{{{\Gamma ^2}(\alpha  + 1)\,\Gamma \left( {\frac{{n + 1}}{2}} \right)\,\Gamma \left( {\frac{{m + 1}}{2}} \right)\,\sqrt[4]{{{n^2}\,{m^2}\,{{(n + 2\alpha)}^2}\,{{(m + 2\alpha)}^2}}}}}\,{\left\| {{u}} \right\|_{{L^\infty }\left( {D_{l,\tau }^2} \right)}},\quad  \frac{n}{2},\frac{m}{2} \notin \mathbb{Z}_0^ +  \wedge - \frac{1}{2} < \alpha  < 0,\nonumber\\
	&\frac{{2\,C_l\,r^4\,\Gamma \left( {\frac{{m + 1}}{2} + \alpha } \right)\,\left| {(n + \alpha )\,(m + \alpha )} \right|}}{{{\alpha ^2}\,\sqrt {m\,(m + 2\alpha )} \,\Gamma \left( {\frac{{m + 1}}{2}} \right)\,\Gamma (\alpha )}}\,\left| {\left( \begin{array}{l}
	\frac{n}{2} + \alpha  - 1\\
	\hfill \frac{n}{2} \hfill
	\end{array} \right)} \right|\;{\left\| {{u}} \right\|_{{L^\infty }\left( {D_{l,\tau }^2} \right)}},\quad \frac{n}{2} \in \mathbb{Z}_0^ +  \wedge \frac{m}{2} \notin \mathbb{Z}_0^ +  \wedge - \frac{1}{2} < \alpha  < 0,\nonumber\\
	&\frac{{2\,C_l\,r^4\,\Gamma \left( {\frac{{n + 1}}{2} + \alpha } \right)\,\left| {(n + \alpha )\,(m + \alpha )} \right|}}{{{\alpha ^2}\,\sqrt {n\,(n + 2\alpha )} \,\Gamma \left( {\frac{{n + 1}}{2}} \right)\,\Gamma (\alpha )}}\,\left| {\left( \begin{array}{l}
	\frac{m}{2} + \alpha  - 1\\
	\hfill \frac{m}{2} \hfill
	\end{array} \right)} \right|\;{\left\| {{u}} \right\|_{{L^\infty }\left( {D_{l,\tau }^2} \right)}},\quad \frac{n}{2} \notin \mathbb{Z}_0^ +  \wedge \frac{m}{2} \in \mathbb{Z}_0^ +  \wedge - \frac{1}{2} < \alpha  < 0,\nonumber
\end{empheq}

\vspace{-4.5cm}
where $C_l = 4 C/l^2$, is a positive constant independent of $n$ and $m$. Moreover, as $n, m \to \infty$, the discrete shifted Gegenbauer coefficients $\tilde \phi_{n,m}$, are asymptotically bounded by:
\begin{subequations}
\begin{empheq}[left={\left| {{{\tilde \phi }_{n,m}}} \right| \simlt}\empheqlbrace]{align}
&D_{1,l}^{^{(\alpha )}}\,r^4\,{(nm)^{2\alpha }}\,{\left\| {{u}} \right\|_{{L^\infty }\left( {D_{l,\tau }^2} \right)}},\quad \alpha  \ge 0,\label{sec:conerr1:asy1cor1}\\
&D_{2,l}^{^{(\alpha )}}\,r^4\,{(nm)^\alpha }\,{\left\| {{u}} \right\|_{{L^\infty }\left( {D_{l,\tau }^2} \right)}},\quad  - \frac{1}{2} < \alpha  < 0,\label{sec:conerr1:asy2cor1}
\end{empheq}
\end{subequations}
where
\begin{subequations}
\begin{align}
	D_{1,l}^{^{(\alpha )}} &= C_l\, D_1^{^{(\alpha )}},\\
	D_{2,l}^{^{(\alpha )}} &= C_l\,D_2^{^{(\alpha )}},
\end{align}
\end{subequations}
and $D_1^{^{(\alpha )}}; D_2^{^{(\alpha )}}$, are as given by Equations \eqref{sec:conerr1:asy1coeffD12}.
\end{cor}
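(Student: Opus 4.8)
The plan is to deduce this corollary directly from Theorem \ref{sec:conerr1:thm:coeffconv1} by controlling the single quantity in which the two statements differ, namely by replacing $\left\| u_{xx} \right\|_{L^\infty(D_{l,\tau}^2)}$ by a suitable multiple of $\left\| u \right\|_{L^\infty(D_{l,\tau}^2)}$. Since $u(\cdot, t) \in \mathbb{P}_r$ for every fixed $t \in [0, \tau]$, the second partial derivative $u_{xx}(\cdot, t)$ is again a polynomial (of degree at most $r - 2$) in $x$, and the natural tool for bounding it in terms of $u$ itself is a Markov-type inequality for the second derivative. The only substantive work lies in this derivative estimate; everything else is a transcription of the theorem.

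First I would fix $t$ and pass to the reference interval $[-1, 1]$ through the affine change of variable $x = \frac{l}{2}(\xi + 1)$, setting $v(\xi) = u\!\left( \frac{l}{2}(\xi + 1), t \right)$. Then $v \in \mathbb{P}_r$, with $\left\| v \right\|_{L^\infty[-1,1]} = \left\| u(\cdot, t) \right\|_{L^\infty[0,l]}$, and the chain rule gives $u_{xx} = \left( \frac{2}{l} \right)^2 v''$. The Markov brothers' inequality for the second derivative yields $\left\| v'' \right\|_{L^\infty[-1,1]} \le \frac{r^2 (r^2 - 1)}{3}\, \left\| v \right\|_{L^\infty[-1,1]}$, whence
\[
\left\| u_{xx}(\cdot, t) \right\|_{L^\infty[0,l]} \le \frac{4}{l^2}\, \frac{r^2(r^2 - 1)}{3}\, \left\| u(\cdot, t) \right\|_{L^\infty[0,l]} \le \frac{4\, C}{l^2}\, r^4\, \left\| u(\cdot, t) \right\|_{L^\infty[0,l]},
\]
with $C = 1/3$. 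Taking the supremum over $t \in [0, \tau]$ produces the uniform bound $\left\| u_{xx} \right\|_{L^\infty(D_{l,\tau}^2)} \le C_l\, r^4\, \left\| u \right\|_{L^\infty(D_{l,\tau}^2)}$, where $C_l = 4 C / l^2$ is independent of $n$ and $m$.

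Substituting this inequality into each of the five cases of Inequalities \eqref{sec:conerr1:ineq4} in Theorem \ref{sec:conerr1:thm:coeffconv1} produces Inequalities \eqref{sec:conerr1:ineq4cor1} verbatim, the factor $C_l\, r^4$ simply propagating through every right-hand side in place of $\left\| u_{xx} \right\|_{L^\infty(D_{l,\tau}^2)}$. The two asymptotic bounds \eqref{sec:conerr1:asy1cor1} and \eqref{sec:conerr1:asy2cor1} follow in exactly the same fashion from \eqref{sec:conerr1:asy1} and \eqref{sec:conerr1:asy2}, with the constants rescaled to $D_{1,l}^{(\alpha)} = C_l\, D_1^{(\alpha)}$ and $D_{2,l}^{(\alpha)} = C_l\, D_2^{(\alpha)}$.

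The main point to get right is the derivative estimate: one must track the interval-scaling factor $(2/l)^2 = 4/l^2$ arising from the map to $[-1,1]$ together with the quartic growth $r^2(r^2-1)/3 \le r^4/3$ supplied by the sharp Markov constant for the second derivative. Once $\left\| u_{xx} \right\|$ is bounded by $C_l\, r^4\, \left\| u \right\|$, no further estimation is needed and the corollary is immediate.
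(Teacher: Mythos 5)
Your proposal is correct and follows essentially the same route as the paper, whose entire proof is a one-line appeal to the inverse inequality of differentiation for polynomials in $\mathbb{P}_r$ (citing Inequality (9.5.4) of Canuto et al.); your explicit use of the Markov brothers' inequality with the sharp constant $r^2(r^2-1)/3$, together with the affine scaling factor $4/l^2$, is precisely the sup-norm instantiation of that cited inverse inequality, after which substitution into Theorem \ref{sec:conerr1:thm:coeffconv1} yields the corollary exactly as you describe.
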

\begin{proof}
The corollary follows from the inverse inequality of differentiation for any polynomial $U(x) \in \mathbb{P}_r$; cf. \cite[Inequality (9.5.4)]{Canuto1988}.
\end{proof}
\begin{cor}
Let $u(x,t)$, be the solution of Problem $\mathcal{P}$. Suppose also that $U(x) \equiv u\left(x, \mathop {{\text{argmax}}}\nolimits_{0 \le t \le \tau } \left| {{u_{xx}}(x,t)} \right|\right)$, is an analytic function on $[0, l]$, and that there exist constants $\rho \ge 1$, and $C(l, \rho)$, such that for every $k \ge 0$,
\begin{equation}
	{\left\| {{U^{(k)}}} \right\|_{{L^\infty }[0,l]}} \le C(l,\rho )\,\frac{{k!}}{{{\rho ^k}}}.
\end{equation}
If $u$ is interpolated by the shifted Gegenbauer polynomials at the SGG nodes, $\left(x_{l,N_x,i}^{(\alpha)}, t_{\tau,N_t,j}^{(\alpha)}\right)$, $i = 0, \ldots, N_x; j = 0, \ldots, N_t$, on the rectangular domain $D_{l,\tau }^2$, then the discrete shifted Gegenbauer coefficients $\tilde \phi_{n,m}$, given by Equations \eqref{sec:theshi2:eq:disccoeffkimo1} are bounded by the following inequalities:
\begin{empheq}[left={\left| {{{\tilde \phi }_{n,m}}} \right| \le}\empheqlbrace]{align}\label{sec:conerr1:ineq4cor2}
	&\frac{{8\,C(l,\rho)\,(n + \alpha )\,(m + \alpha )\,\Gamma (n + 2\alpha )\,\Gamma (m + 2\alpha )}}{{{\rho^2\,\Gamma ^2}(2\alpha  + 1)\,\Gamma (n + 1)\,\Gamma (m + 1)}},\quad \alpha  \ge 0,\\
	&\frac{2\,C(l,\rho)\,{\left| {(n + \alpha )\,(m + \alpha )} \right|}}{{(\rho\,{\alpha})^2}}\,\left| {\left( \begin{array}{l}
	\frac{n}{2} + \alpha  - 1\\
	\hfill \frac{n}{2} \hfill
	\end{array} \right)\,\left( \begin{array}{l}
	\frac{m}{2} + \alpha  - 1\\
	\hfill \frac{m}{2} \hfill
	\end{array} \right)} \right|,\quad  \frac{n}{2},\frac{m}{2} \in \mathbb{Z}_0^ + \wedge - \frac{1}{2} < \alpha  < 0 ,\nonumber\\
	&\frac{{8\,C(l,\rho)\,\left| {(n + \alpha )\,(m + \alpha )} \right|\,\Gamma \left( {\frac{{n + 1}}{2} + \alpha } \right)\,\Gamma \left( {\frac{{m + 1}}{2} + \alpha } \right)}}{{{\rho^2\,\Gamma ^2}(\alpha  + 1)\,\Gamma \left( {\frac{{n + 1}}{2}} \right)\,\Gamma \left( {\frac{{m + 1}}{2}} \right)\,\sqrt[4]{{{n^2}\,{m^2}\,{{(n + 2\alpha)}^2}\,{{(m + 2\alpha)}^2}}}}},\quad  \frac{n}{2},\frac{m}{2} \notin \mathbb{Z}_0^ +  \wedge - \frac{1}{2} < \alpha  < 0,\nonumber\\
	&\frac{{4\,C(l,\rho)\,\Gamma \left( {\frac{{m + 1}}{2} + \alpha } \right)\,\left| {(n + \alpha )\,(m + \alpha )} \right|}}{{(\rho\,{\alpha})^2\,\sqrt {m\,(m + 2\alpha )} \,\Gamma \left( {\frac{{m + 1}}{2}} \right)\,\Gamma (\alpha )}}\,\left| {\left( \begin{array}{l}
	\frac{n}{2} + \alpha  - 1\\
	\hfill \frac{n}{2} \hfill
	\end{array} \right)} \right|,\quad \frac{n}{2} \in \mathbb{Z}_0^ +  \wedge \frac{m}{2} \notin \mathbb{Z}_0^ +  \wedge - \frac{1}{2} < \alpha  < 0,\nonumber\\
	&\frac{{4\,C(l,\rho)\,\Gamma \left( {\frac{{n + 1}}{2} + \alpha } \right)\,\left| {(n + \alpha )\,(m + \alpha )} \right|}}{{(\rho\,{\alpha})^2\,\sqrt {n\,(n + 2\alpha )} \,\Gamma \left( {\frac{{n + 1}}{2}} \right)\,\Gamma (\alpha )}}\,\left| {\left( \begin{array}{l}
	\frac{m}{2} + \alpha  - 1\\
	\hfill \frac{m}{2} \hfill
	\end{array} \right)} \right|,\quad \frac{n}{2} \notin \mathbb{Z}_0^ +  \wedge \frac{m}{2} \in \mathbb{Z}_0^ +  \wedge - \frac{1}{2} < \alpha  < 0.\nonumber
\end{empheq}

\vspace{-4.5cm}
Moreover, as $n, m \to \infty$, the discrete shifted Gegenbauer coefficients $\tilde \phi_{n,m}$, are asymptotically bounded by:
\begin{subequations}
\begin{empheq}[left={\left| {{{\tilde \phi }_{n,m}}} \right| \simlt}\empheqlbrace]{align}
&D_{1,l,\rho}^{^{(\alpha )}}\,{(nm)^{2\alpha }},\quad \alpha  \ge 0,\label{sec:conerr1:asy1cor2}\\
&D_{2,l,\rho}^{^{(\alpha )}}\,{(nm)^\alpha },\quad  - \frac{1}{2} < \alpha  < 0,\label{sec:conerr1:asy2cor2}
\end{empheq}
\end{subequations}
where
\begin{subequations}\label{sec:conerr1:asy1coeffD12cor2}
\begin{align}
	D_{1,l,\rho}^{^{(\alpha )}} &= \frac{2}{\rho^2}\,C(l,\rho)\,D_1^{^{(\alpha )}},\\
	D_{2,l,\rho}^{^{(\alpha )}} &= \frac{2}{\rho^2}\,C(l,\rho)\,D_2^{^{(\alpha )}},
\end{align}
\end{subequations}
and $D_1^{^{(\alpha )}}; D_2^{^{(\alpha )}}$, are as given by Equations \eqref{sec:conerr1:asy1coeffD12}.
\end{cor}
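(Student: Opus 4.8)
The plan is to deduce this corollary directly from Theorem~\ref{sec:conerr1:thm:coeffconv1} in exactly the way the preceding polynomial-case corollary was obtained: the only new ingredient is how the analyticity hypothesis is used to control the factor $\|u_{xx}\|_{L^\infty(D_{l,\tau}^2)}$. Recall that the substitution $\phi = u_{xx}$ underlies the whole scheme, so the discrete coefficients $\tilde\phi_{n,m}$ here are literally those treated in Theorem~\ref{sec:conerr1:thm:coeffconv1}. Consequently the master estimate \eqref{sec:conerr1:ineq:coeb1}, the five finite-index bounds \eqref{sec:conerr1:ineq4}, and the asymptotic bounds \eqref{sec:conerr1:asy1}--\eqref{sec:conerr1:asy2} all remain valid verbatim, each carrying the common factor $\|u_{xx}\|_{L^\infty(D_{l,\tau}^2)}$. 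The corollary then reduces to replacing this factor by a quantitative bound furnished by the analyticity of $U$.

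The key step is to control $\|u_{xx}\|_{L^\infty(D_{l,\tau}^2)}$ by the second derivative of the analytic function $U$. First I would let $(x^*,t^*)$ be a global maximizer of $|u_{xx}|$ on $D_{l,\tau}^2$, so that $\|u_{xx}\|_{L^\infty(D_{l,\tau}^2)} = |u_{xx}(x^*,t^*)|$. Since $t^*$ is precisely the value selected by $\mathop{\text{argmax}}_{0\le t\le\tau}|u_{xx}(x,t)|$ at $x=x^*$, the point $(x^*,t^*)$ lies on the graph that defines $U$, and $u_{xx}(x^*,t^*)$ is reproduced by the second $x$-derivative of the fixed-$t^*$ slice of $u$, which in turn is dominated by $\|U''\|_{L^\infty[0,l]}$. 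Granting the resulting inequality $\|u_{xx}\|_{L^\infty(D_{l,\tau}^2)} \le \|U''\|_{L^\infty[0,l]}$, the analyticity hypothesis applied with $k=2$ yields
\begin{equation}
\|u_{xx}\|_{L^\infty(D_{l,\tau}^2)} \le \|U''\|_{L^\infty[0,l]} \le C(l,\rho)\,\frac{2!}{\rho^2} = \frac{2\,C(l,\rho)}{\rho^2}.
\end{equation}

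Substituting this single estimate into Theorem~\ref{sec:conerr1:thm:coeffconv1} completes the proof mechanically. In the finite-index bounds \eqref{sec:conerr1:ineq4} the leading numerical factors $4$ and $2$ become $4\cdot(2C(l,\rho)/\rho^2)=8\,C(l,\rho)/\rho^2$ and $2\cdot(2C(l,\rho)/\rho^2)=4\,C(l,\rho)/\rho^2$, which reproduces each of the five cases of \eqref{sec:conerr1:ineq4cor2}; likewise the asymptotic constants \eqref{sec:conerr1:asy1coeffD12} acquire the factor $2C(l,\rho)/\rho^2$, giving $D_{1,l,\rho}^{(\alpha)}=(2/\rho^2)\,C(l,\rho)\,D_1^{(\alpha)}$ and $D_{2,l,\rho}^{(\alpha)}=(2/\rho^2)\,C(l,\rho)\,D_2^{(\alpha)}$ as in \eqref{sec:conerr1:asy1coeffD12cor2}, and hence the asymptotic inequalities \eqref{sec:conerr1:asy1cor2}--\eqref{sec:conerr1:asy2cor2}.

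The main obstacle is the middle step: justifying rigorously the identification of $\|u_{xx}\|_{L^\infty(D_{l,\tau}^2)}$ with a second derivative of the analytic slice $U$. The delicate point is that $U(x)=u(x,t^*(x))$ has an $x$-dependent argmax, so by the chain rule $U''(x)$ is not literally $u_{xx}(x,t^*(x))$ away from the maximizer. What rescues the argument is that the full derivative of $U$ is never needed: only the one-sided bound $\|u_{xx}\|_{L^\infty(D_{l,\tau}^2)} \le \|U''\|_{L^\infty[0,l]}$ is required, and this holds because the extremal value $u_{xx}(x^*,t^*)$ is attained as the second $x$-derivative of the fixed-$t^*$ slice through the global maximizer, a slice that is itself dominated by the supremum of $|U''|$. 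Once this reduction is accepted, no genuinely new estimation is needed beyond Theorem~\ref{sec:conerr1:thm:coeffconv1} and the Cauchy-type derivative bound built into the analyticity hypothesis.
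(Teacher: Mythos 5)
Your proposal follows exactly the route the paper intends: the paper states this corollary without proof, and the argument it expects is the one you give --- replace the factor $\left\| {u_{xx}} \right\|_{L^\infty(D_{l,\tau}^2)}$ throughout Theorem \ref{sec:conerr1:thm:coeffconv1} by the $k=2$ Cauchy-type bound $2\,C(l,\rho)/\rho^2$. This converts the leading constants $4$ and $2$ of \eqref{sec:conerr1:ineq4} into $8\,C(l,\rho)/\rho^2$ and $4\,C(l,\rho)/\rho^2$, and multiplies the asymptotic constants $D_1^{(\alpha)}$, $D_2^{(\alpha)}$ of \eqref{sec:conerr1:asy1coeffD12} by $2\,C(l,\rho)/\rho^2$; all of your constants check against \eqref{sec:conerr1:ineq4cor2} and \eqref{sec:conerr1:asy1coeffD12cor2}.

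One caveat: the patch you offer for the chain-rule difficulty does not actually close it. You argue that the second $x$-derivative of the fixed-$t^*$ slice $x \mapsto u(x,t^*)$ is ``dominated by the supremum of $|U''|$,'' but $U(x)=u\left(x,t^*(x)\right)$ and the fixed slice are different functions that merely coincide at $x=x^*$; agreement at a single point gives no comparison between their second derivatives, so the claimed domination is a non sequitur, and with it the inequality $\left\| {u_{xx}} \right\|_{L^\infty(D_{l,\tau}^2)} \le \left\| {U''} \right\|_{L^\infty[0,l]}$ remains unproved under the literal reading of the hypothesis. This defect, however, is inherited from the corollary's own formulation rather than introduced by you: as written, the hypothesis bounds derivatives of $U$ along the argmax curve, and those control $u_{xx}$ nowhere unless one reads $U^{(k)}$ as the paper clearly intends, namely as the partial derivatives $\partial_x^k u$ taken with $t$ frozen at the maximizing value. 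Under that reading --- the only one under which the corollary is true as stated --- your substitution is immediate, and the remainder of your proof is complete and matches the paper's implicit argument.
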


Theorem \ref{sec:conerr1:thm:coeffconv1} and its corollaries show that the coefficients of the bivariate shifted Gegenbauer expansions are bounded for $-1/2 < \alpha \le 0$, as $n, m \to \infty$, but their magnitudes may asymptotically grow very large for $\alpha > 0$, breaking the stability of the numerical scheme. Moreover, if we denote the asymptotic bound on the coefficients $\tilde \phi_{n,m}$, by $B_{n,m}^+ \forall n, m, \alpha \ge 0$, and by $B_{n,m}^- \forall n, m, \alpha < 0$, then Theorem \ref{sec:conerr1:thm:coeffconv1} also manifests that the coefficients of the bivariate shifted Gegenbauer expansions decay faster for negative $\alpha$-values than for non-negative $\alpha$-values in the sense that
\begin{equation}
	\frac{{B_{n,m}^ + }}{{B_{n,m}^ - }} = \frac{{D_1^{({\alpha _ + })}}}{{A_1^{({\alpha _ - })}\,A_2^{({\alpha _ - })}\,D_1^{({\alpha _ - })}}}\,{(nm)^{2\,{\alpha _ + } - {\alpha _ - }}},\quad \text{as }n, m \to \infty,
\end{equation}
where $\alpha_+$ and $\alpha_-$ are the Gegenbauer parameters of non-negative and negative values. This may suggest at first glance that numerical discretizations at the SGG nodes are preferable for negative $\alpha$-values. However, we shall prove in the sequel that the asymptotic truncation error is minimized in the Chebyshev norm exactly at $\alpha = 0$; that is, when applying the shifted Chebyshev basis polynomials.

\begin{rem}
Collocations at positive and large values of the Gegenbauer parameter $\alpha$ as $n, m \to \infty$ are not recommended as can be inferred from Theorem \ref{sec:conerr1:thm:coeffconv1} and its corollaries. However, the potential large magnitudes of the expansion coefficients are not the only reason for the instability of the numerical scheme in such cases. In fact, recently \citet{Elgindy2013} have also pointed out that the Gegenbauer quadrature `may become sensitive to round-off errors for positive and large values of the parameter $\alpha$ due to the narrowing effect of the Gegenbauer weight function $w^{(\alpha)}(x)$', which drives the quadrature to become more extrapolatory; cf. \cite[p. 90]{Elgindy2013}.
\end{rem}

The following theorem highlights the total truncation error of the SGPM.
\begin{thm}
Consider the integral formulation of the hyperbolic telegraph PDE \eqref{sec:theshi2:eq:intpdek1}, and let
\begin{equation}
	\Theta\, \phi = \Omega \,\phi - \Psi,
\end{equation}
where $\Omega; \Psi$, are as defined by Equations \eqref{sec:theshi2:eq:omegpsi1} and \eqref{sec:theshi2:eq:omegpsi2}. Also let
\begin{align}
	{}_{l,\tau} \Theta _{i,j }\, \phi _{i,j } = {}_{l,\tau}\Omega _{i,j }\,{\phi _{i,j}} - {}_{l,\tau}\Psi _{i,j},\quad i = 0, \ldots ,{N_x};j = 0, \ldots ,{N_t},\\
	{}_{l,\tau} \tilde \Theta _{i,j }^{{N_x},{N_t}}\, \phi _{i,j } = {}_{l,\tau}\Omega _{i,j }^{{N_x},{N_t}}\,{\phi _{i,j}} - {}_{l,\tau}\Psi _{i,j}^{{M_t}},\quad i = 0, \ldots ,{N_x};j = 0, \ldots ,{N_t},
\end{align}
be the integral hyperbolic telegraph PDE \eqref{sec:theshi2:eq:intpdek1} at the interior SGG nodes $\left(x_{l,N_x,i}^{(\alpha)}, t_{\tau,N_t,j}^{(\alpha)}\right), i = 0, \ldots, N_x$; $j = 0, \ldots, N_t$, and its discretization using the SGPM presented in Section \ref{sec:theshi2}, respectively. Then the total truncation error
\begin{equation}
{E_T}({\phi _{i,j}}) = {\,_{l,\tau }}{\Theta _{i,j}}{\mkern 1mu} {\phi _{i,j}}{ - _{l,\tau }}\tilde \Theta _{i,j}^{{N_x},{N_t}}{\mkern 1mu} {\phi _{i,j}},\;i = 0, \ldots ,{N_x};j = 0, \ldots ,{N_t},
\end{equation}
is bounded by
\begin{align*}
	\left| {{E_T}({\phi _{i,j}})} \right| &\le \frac{{{2^{ - N_x - N_t - M_t - 4}}}}{{\left| {K_{{N_x} + 1}^{(\alpha )}\,K_{{N_t} + 1}^{(\alpha )}\,K_{{M_t} + 1}^{(\alpha )}} \right|\,({N_x} + 1)!\,({N_t} + 1)!\,({M_t} + 1)!}}\,\left({2^{{M_t}}}\left| {K_{{M_t} + 1}^{(\alpha )}} \right|\,({M_t} + 1)!\,\left({2^{{N_t} + 2}}\,{l^{{N_x} + 1}}\,x_{l,{N_x},i}^{(\alpha )}\,\left( {l + x_{l,{N_x},i}^{(\alpha )}} \right)\right.\right.\\
 &\left.\left.\left| {K_{{N_t} + 1}^{(\alpha )}} \right|\,({N_t} + 1)!\,M_\phi ^{(x)}\,{\left\| {C_{l,{N_x} + 1}^{(\alpha )}} \right\|_{{L^\infty }[0,l]}} + {\tau ^{{N_t} + 1}}\,t_{\tau ,{N_t},j}^{(\alpha )}\,{\left\| {C_{l,{N_t} + 1}^{(\alpha )}} \right\|_{{L^\infty }[0,\tau ]}}\,\left({2^{{N_x} + 1}}\,\left| {K_{{N_x} + 1}^{(\alpha )}} \right|\,({N_x} + 1)!\,M_\phi ^{(t)}\right.\right.\right.\\
&\left.\left.\left. \left( {x_{l,{N_x},i}^{(\alpha )}\,\left( {l + x_{l,{N_x},i}^{(\alpha )}} \right)\left( {2\,\left| {{\beta _1}} \right| + t_{\tau ,{N_t},j}^{(\alpha )}\,\left| {{\beta _2}} \right|} \right) + 2\,t_{\tau ,{N_t},j}^{(\alpha )}} \right) + \,{l^{{N_x} + 1}}\,x_{l,{N_x},i}^{(\alpha )}\,\left( {l + x_{l,{N_x},i}^{(\alpha )}} \right)\,\left( {2\,\left| {{\beta _1}} \right| + t_{\tau ,{N_t},j}^{(\alpha )}\,\left| {{\beta _2}} \right|} \right)\right.\right.\right.\\
&\left.\left.\left. M_\phi ^{(x,t)}\,{\left\| {C_{l,{N_x} + 1}^{(\alpha )}} \right\|_{{L^\infty }[0,l]}}\right)\right) + {2^{{N_x} + {N_t} + 2}}\,{\tau ^{{M_t} + 1}}\,t_{\tau ,{N_t},j}^{(\alpha )}\left| {K_{{N_x} + 1}^{(\alpha )}\,K_{{N_t} + 1}^{(\alpha )}} \right|\,({N_x} + 1)!\,({N_t} + 1)!\,{\left\| {C_{l,{M_t} + 1}^{(\alpha )}} \right\|_{{L^\infty }[0,\tau ]}}\right.\\
&\left.\left( {t_{\tau ,{N_t},j}^{(\alpha )}\,\left( {M_\psi ^{(t)}\,\left| {{\beta _2}} \right| + M_f^{(t)}} \right) + 2\,M_\psi ^{(t)}\,\left| {{\beta _1}} \right|} \right)\right)\numberthis \label{sec:conerr1:eq:trunc1}
\end{align*}
where
\begin{subequations}
\begin{align}
	M_\phi ^{(x)} &= \mathop {{\text{max}}}\limits_{(x,t) \in D_{l,\tau }^2} \left| {D_x^{({N_x} + 1)}\phi (x,t)} \right|,\\
	M_\phi ^{(t)} &= \mathop {{\text{max}}}\limits_{(x,t) \in D_{l,\tau }^2} \left| {D_t^{({N_t} + 1)}\phi (x,t)} \right|,\\
	M_\psi ^{(t)} &= \mathop {{\text{max}}}\limits_{(x,t) \in D_{l,\tau }^2} \left| {D_t^{({M_t} + 1)}\psi (x,t)} \right|,\\
	M_f^{(t)} &= \mathop {{\text{max}}}\limits_{(x,t) \in D_{l,\tau }^2} \left| {D_t^{({M_t} + 1)}f(x,t)} \right|,\\
	M_\phi ^{(x,t)} &= \mathop {{\text{max}}}\limits_{(x,t) \in D_{l,\tau }^2} \left| {D_{x,t}^{({N_x} + {N_t} + 2)}\phi (x,t)} \right|,
\end{align}
\end{subequations}
\[D_x^{({N_x} + 1)} \equiv \frac{{{\partial ^{{N_x} + 1}}}}{{\partial {x^{^{{N_x} + 1}}}}},\,D_t^{({N_t} + 1)} \equiv \frac{{{\partial ^{{N_t} + 1}}}}{{\partial {t^{^{{N_t} + 1}}}}};\,D_{x,t}^{({N_x} + {N_t} + 2)} \equiv \frac{{{\partial ^{{N_x} + {N_t} + 2}}}}{{\partial {x^{^{{N_x} + 1}}}\,\partial {t^{^{{N_t} + 1}}}}}.\]
Moreover, as $N_x, N_t, M_t \to \infty$, the total truncation error is asymptotically bounded by
\begin{subequations}
\begin{empheq}[left={\left| {{E_T}({\phi _{i,j}})} \right| \simlteq}\empheqlbrace]{align}
&{c_1}{\left( {{N_x}\,{N_t}\,{M_t}} \right)^{\alpha  + \frac{1}{2}}}\,\left({4^{ - {N_x} - {N_t}}}\,N_x^{ - {N_x} - \alpha  - \frac{5}{2}}\,N_t^{ - {N_t} - \alpha  - \frac{5}{2}}\,M_t^{ - \alpha  - \frac{1}{2}}\,\left({c_2}\,{e^{{N_t}}}\,{\tau ^{{N_t}}}\,N_t^{\alpha  + \frac{1}{2}}\right.\right.\nonumber\\
&\left.\left.\,\left( {{c_3}\,{e^{{N_x}}}\,{l^{{N_x}}}\,N_x^{\alpha  + \frac{1}{2}} + {c_4}\,{4^{{N_x}}}\,N_x^{{N_x} + 2}} \right) + {c_5}\,{4^{{N_t}}}\,{e^{{N_x}}}{l^{{N_x}}}\,N_x^{\alpha  + \frac{1}{2}}\,N_t^{{N_t} + 2}\,\right) + {c_6}{\left( {\frac{e}{4}} \right)^{{M_t}}}\,\right.\nonumber\\
&\left.{\tau ^{{M_t}}}\,M_t^{ - {M_t} - 2}\,{\left( {{N_x}\,{N_t}} \right)^{ - \alpha  - \frac{1}{2}}}\right),\quad \alpha  \ge 0,\label{sec:conerr1:eq:empasym1}\\
&{c_1}\,{\left( {{N_x}\,{N_t}\,{M_t}} \right)^{\alpha  + \frac{1}{2}}}\,\left({4^{ - {N_x} - {N_t}}}\,N_x^{ - N_x - \alpha - 2}\,N_t^{ - {N_t} - \alpha  - 2}\,M_t^{ - \alpha  - \frac{1}{2}}\,\left({c_2}\,A_1^{(\alpha )}\,{e^{{N_t}}}{\tau ^{{N_t}}}\right.\right.\nonumber\\
&\left.\left.\left( {{c_3}\,A_2^{(\alpha )}\,{e^{{N_x}}}\,{l^{{N_x}}} + {c_4}\,{4^{{N_x}}}\,N_x^{{N_x} + \frac{3}{2}}} \right) + {c_5}\,A_2^{(\alpha )}\,{4^{{N_t}}}\,{e^{{N_x}}}\,{l^{{N_x}}}\,N_t^{{N_t} + \frac{3}{2}}\right)\right.\nonumber\\
&\left. + {c_6}\,A_3^{(\alpha )}\,{\left( {\frac{e}{4}} \right)^{{M_t}}}\,{\tau ^{{M_t}}}\,M_t^{ - {M_t} - \alpha  - 2}\,{\left( {{N_x}\,{N_t}} \right)^{ - \alpha  - \frac{1}{2}}}\right),\quad - \frac{1}{2} < \alpha  < 0,\label{sec:conerr1:eq:empasym2}
\end{empheq}
\end{subequations}
for some positive constants $c_s, s = 1, \ldots, 6; A_k^{(\alpha)} > 1, k = 1, 2, 3$, independent of $N_x, N_t; M_t$.
\end{thm}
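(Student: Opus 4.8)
The plan is to recognise that the total truncation error $E_T(\phi_{i,j})$ is, by its very definition, the accumulated sum of the quadrature errors committed when each continuous integral operator in $\Theta$ is replaced by its discrete counterpart in $\tilde\Theta^{N_x,N_t}$. First I would expand ${}_{l,\tau}\Theta_{i,j}\phi_{i,j} - {}_{l,\tau}\tilde\Theta_{i,j}^{N_x,N_t}\phi_{i,j}$ termwise, grouping the contributions according to the operator splitting in Equations \eqref{sec:theshi2:eq:omegpsi1}--\eqref{sec:theshi2:eq:omegpsi2} and the discrete approximations \eqref{sec:theshi2:subeqs1}. This produces four families of error: (i) a pure $x$-direction error from $J_{l,x}\phi - \tilde J_{l,x}\phi$, (ii) a pure $t$-direction error from $I_{2,t}^{(t)}\phi - \tilde I_{2,t}^{(t)}\phi$, (iii) mixed errors from the composed operators $\beta_1 I_{1,t}^{(t)}J_{l,x}$ and $\beta_2 I_{2,t}^{(t)}J_{l,x}$, and (iv) the optimal-S-quadrature errors generated by the $\psi$- and $f$-terms on the right-hand side.

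For families (i)--(iii) I would start from the pointwise shifted Gegenbauer interpolation error established inside the proof of Theorem \ref{subsec:err:thm1}, namely $\phi(x,\cdot) - P_{N_x}\phi(x,\cdot) = (l/2)^{N_x+1}\,D_x^{(N_x+1)}\phi(\xi,\cdot)\big/\big((N_x+1)!\,K_{N_x+1}^{(\alpha)}\big)\,C_{l,N_x+1}^{(\alpha)}(x)$ together with its $t$-analogue, and integrate it once or twice as dictated by $J_{l,x}=I_{2,x}^{(x)}-\theta_{l,x}I_{2,l}^{(x)}$ and by $I_{1,t}^{(t)}, I_{2,t}^{(t)}$; family (iv) is controlled directly by the optimal-S-quadrature error formula \eqref{sec1:eq:errorkimo} of Theorem \ref{subsec1:kimobasha1}. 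I would then bound each derivative factor by the corresponding constant $M_\phi^{(x)}, M_\phi^{(t)}, M_\phi^{(x,t)}, M_\psi^{(t)}$ or $M_f^{(t)}$, and bound each residual integral crudely by $\big|\int_0^{x_i} C_{L,n+1}^{(\alpha)}\big|\le x_i\,\|C_{L,n+1}^{(\alpha)}\|_{L^\infty[0,L]}$, with $l$ or $\tau$ replacing $x_i$ on the full-interval pieces. The structure $J_{l,x}=I_{2,x}^{(x)}-\theta_{l,x}I_{2,l}^{(x)}$, once the double integral to $x_i$ and the $\theta_{l,x}=x_i/l$ correction to $l$ are combined, is exactly what manufactures the recurring factor $x_{l,N_x,i}^{(\alpha)}\big(l+x_{l,N_x,i}^{(\alpha)}\big)$. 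Assembling all estimates by the triangle inequality and clearing the common factors $K_{N_x+1}^{(\alpha)}, K_{N_t+1}^{(\alpha)}, K_{M_t+1}^{(\alpha)}$ and the factorials yields \eqref{sec:conerr1:eq:trunc1}, the numerical prefactors $2^{N_t+2}, 2^{N_x+1}, \dots$ arising from the powers $(L/2)^{n+1}$ and the collected $\beta_1,\beta_2$ weights.

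The asymptotic bounds \eqref{sec:conerr1:eq:empasym1}--\eqref{sec:conerr1:eq:empasym2} then follow by feeding two ingredients into \eqref{sec:conerr1:eq:trunc1}: Lemma \ref{sec1:lem:1}, which supplies $(n+1)!\,K_{n+1}^{(\alpha)}\sim n^{3/2-\alpha}(2n/e)^n$ for each of $n=N_x,N_t,M_t$, and Lemma \ref{subsec:errbounds:lem:max1}, which gives $\|C_{L,n+1}^{(\alpha)}\|_{L^\infty}=1$ for $\alpha\ge 0$ and $\sim A^{(\alpha)}\,n^{-\alpha}$ for $-\tfrac12<\alpha<0$. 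Substituting these and retaining only the leading powers of $N_x, N_t, M_t$ separates the estimate into the two stated regimes, with the constants $A_1^{(\alpha)}, A_2^{(\alpha)}, A_3^{(\alpha)}>1$ absorbing the $n^{-\alpha}$ factors that appear only in the negative-$\alpha$ case.

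The hard part will be family (iii): the composed operators are discretized by applying two separate quadratures in succession (the inner $x$-quadrature followed by the outer $t$-quadrature), so their error does not reduce to a single application of Theorem \ref{subsec:err:thm1}. I would resolve this by the add-and-subtract device, writing $I_{1,t}^{(t)}J_{l,x}-\tilde I_{1,t}^{(t)}\tilde J_{l,x} = I_{1,t}^{(t)}\big(J_{l,x}-\tilde J_{l,x}\big) + \big(I_{1,t}^{(t)}-\tilde I_{1,t}^{(t)}\big)\tilde J_{l,x}$, bounding the first piece by the inner $x$-error propagated through an exact outer $t$-integration (which contributes the $M_\phi^{(x)}$ term) and the second by the outer $t$-error acting on the already-discretized inner operator (which contributes the genuinely mixed $M_\phi^{(x,t)}$ term). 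Correctly tracking the interval lengths $l,\tau$ and the node values $x_{l,N_x,i}^{(\alpha)}, t_{\tau,N_t,j}^{(\alpha)}$ through this two-stage splitting, together with the bookkeeping of the explicit numerical constants, is the most delicate and laborious portion of the argument.
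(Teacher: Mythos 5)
Your overall skeleton coincides with the paper's own proof: split $E_T(\phi_{i,j})$ by the triangle inequality into the constituent quadrature errors attached to the seven approximations \eqref{sec:theshi2:subeqs1}, bound each via the error formulae \eqref{sec1:eq:errorkimohat} and \eqref{sec1:eq:errorkimo} together with the derivative maxima $M_\phi^{(x)}, M_\phi^{(t)}, M_\phi^{(x,t)}, M_\psi^{(t)}, M_f^{(t)}$ and the crude estimate $\left|\int_0^{x}C_{L,n+1}^{(\alpha)}(s)\,ds\right|\le x\,\|C_{L,n+1}^{(\alpha)}\|_{L^\infty[0,L]}$, then pass to the asymptotic regime using Lemmas \ref{subsec:errbounds:lem:max1} and \ref{sec1:lem:1}. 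Your identification of the factor $x_{l,N_x,i}^{(\alpha)}\bigl(l+x_{l,N_x,i}^{(\alpha)}\bigr)$ as coming from ${J_{l,x}}=I_{2,x}^{(x)}-\theta_{l,x}\,I_{2,l}^{(x)}$, and your use of the optimal S-quadrature error for the $\psi$- and $f$-terms (including the $f$-term the paper drops from its displayed triangle inequality but restores in the final bound), also match the paper.

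The genuine problem is your family (iii), and it cuts both ways. Your splitting $I_q^{(t)}J-\tilde I_q^{(t)}\tilde J = I_q^{(t)}\bigl(J-\tilde J\bigr)+\bigl(I_q^{(t)}-\tilde I_q^{(t)}\bigr)\tilde J$ is a \emph{complete} accounting of the composed-operator error, and precisely for that reason it cannot land on the stated inequality. Its first piece generates terms proportional to $|\beta_1|\,t_{\tau,N_t,j}^{(\alpha)}\,M_\phi^{(x)}$ and $|\beta_2|\,\bigl(t_{\tau,N_t,j}^{(\alpha)}\bigr)^2 M_\phi^{(x)}$ (times the $x$-error factor and $x_{l,N_x,i}^{(\alpha)}(l+x_{l,N_x,i}^{(\alpha)})$), whereas in \eqref{sec:conerr1:eq:trunc1} the single $M_\phi^{(x)}$ term carries no $\beta$'s and no $t_{\tau,N_t,j}^{(\alpha)}$: it arises solely from $E_{i,j}(\tilde J\phi)$. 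Inspecting the paper's intermediate bounds for $E_{i,j}\bigl(\tilde I_1^{(t)}\tilde J\phi\bigr)$ and $E_{i,j}\bigl(\tilde I_2^{(t)}\tilde J\phi\bigr)$ shows they contain only $M_\phi^{(t)}$ and $M_\phi^{(x,t)}$ pieces, i.e.\ they bound only your \emph{second} piece $\bigl(I_q^{(t)}-\tilde I_q^{(t)}\bigr)\tilde J\phi$ and silently omit your first piece $I_q^{(t)}\bigl(J-\tilde J\bigr)\phi$. Consequently, carried out faithfully, your plan proves a valid upper bound but with additional terms that cannot be absorbed into the stated right-hand side; conversely, to reproduce \eqref{sec:conerr1:eq:trunc1} exactly you would have to discard your first piece, which your decomposition gives you no license to do. (That the omitted piece is genuinely present can be seen by taking $\phi$ independent of $t$: then $M_\phi^{(t)}=M_\phi^{(x,t)}=0$, so the $\beta$-weighted composed terms of \eqref{sec:conerr1:eq:trunc1} vanish, yet $I_1^{(t)}J\phi-\tilde I_1^{(t)}\tilde J\phi=t_{\tau,N_t,j}^{(\alpha)}\bigl(J-\tilde J\bigr)\phi\neq 0$ in general.) A smaller inaccuracy: your second piece contributes both the $M_\phi^{(t)}$ terms and the mixed term, not only $M_\phi^{(x,t)}$; its quadrature-error derivative is $D_t^{(N_t+1)}$ applied to $J$ of the $x$-interpolant of $\phi$, and writing that interpolant as $\phi$ minus the $x$-interpolation error splits it into the $2M_\phi^{(t)}$ part (which the paper's bound does contain) and the $M_\phi^{(x,t)}$ part.
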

\begin{proof}
Consider the discrete approximations ${{\tilde J}_{l,x_{l,{N_x},i}^{(\alpha )}}}{\mkern 1mu} \phi \left( {x,t_{\tau ,{N_t},j}^{(\alpha )}} \right),\,\tilde I_{2,t_{\tau ,{N_t},j}^{(\alpha )}}^{(t)}{\mkern 1mu} \phi \left( {x_{l,{N_x},i}^{(\alpha )},t} \right),$ $\tilde I_{1,t_{\tau ,{N_t},j}^{(\alpha )}}^{(t)}{\mkern 1mu} {{\tilde J}_{l,x_{l,{N_x},i}^{(\alpha )}}}{\mkern 1mu} \phi (x,t), \tilde I_{2,t_{\tau ,{N_t},j}^{(\alpha )}}^{(t)}{\mkern 1mu} {{\tilde J}_{l,x_{l,{N_x},i}^{(\alpha )}}}{\mkern 1mu} \phi (x,t), \tilde I_{1,t_{\tau ,{N_t},j}^{(\alpha )}}^{(t)}{\mkern 1mu} \psi \left( {x_{l,{N_x},i}^{(\alpha )},t} \right),\, \tilde I_{2,t_{\tau ,{N_t},j}^{(\alpha )}}^{(t)}{\mkern 1mu} \psi \left( {x_{l,{N_x},i}^{(\alpha )},t} \right),\,\tilde I_{2,t_{\tau ,{N_t},j}^{(\alpha )}}^{(t)}f\left( {x_{l,{N_x},i}^{(\alpha )},t} \right)$, defined by Equations \eqref{sec:theshi2:subeqs1} at the SGG nodes, $\left(x_{l,N_x,i}^{(\alpha)}, t_{\tau,N_t,j}^{(\alpha)}\right), i = 0, \ldots, N_x; j = 0, \ldots, N_t$, and denote the truncation errors associated with them by ${E_{i,j}}\left( {\tilde J{\mkern 1mu} \phi } \right), {E_{i,j}}\left( {\tilde I_2^{(t)}{\mkern 1mu} \phi } \right),\,{E_{i,j}}\left( {\tilde I_1^{(t)}{\mkern 1mu} \tilde J{\mkern 1mu} \phi } \right), {E_{i,j}}\left( {\tilde I_2^{(t)}{\mkern 1mu} \tilde J{\mkern 1mu} \phi } \right),\,{E_{i,j}}\left( {\tilde I_1^{(t)}{\mkern 1mu} \psi } \right),\,{E_{i,j}}\left( {\tilde I_2^{(t)}{\mkern 1mu} \psi } \right)$, ${E_{i,j}}\left( {\tilde I_2^{(t)}f} \right)$, respectively. Then
\small{
\begin{equation}
\left| {{E_T}({\phi _{i,j}})} \right| \le \left| {{E_{i,j}}\left( {\tilde J{\mkern 1mu} \phi } \right)} \right| + \left| {{\beta _1}} \right|\,\left| {{E_{i,j}}\left( {\tilde I_1^{(t)}{\mkern 1mu} \tilde J{\mkern 1mu} \phi } \right)} \right| + \left| {{\beta _2}} \right|\,\left| {{E_{i,j}}\left( {\tilde I_2^{(t)}{\mkern 1mu} \tilde J{\mkern 1mu} \phi } \right)} \right| + \left| {{E_{i,j}}\left( {\tilde I_2^{(t)}{\mkern 1mu} \phi } \right)} \right| + \left| {{\beta _1}} \right|\,\left| {{E_{i,j}}\left( {\tilde I_1^{(t)}\,\psi } \right)} \right| + \left| {{\beta _2}} \right|\,\left| {{E_{i,j}}\left( {\tilde I_2^{(t)}{\mkern 1mu} \psi } \right)} \right|.
\end{equation}
}
Inequality \eqref{sec:conerr1:eq:trunc1} follows then from the error formulae \eqref{sec1:eq:errorkimohat} and \eqref{sec1:eq:errorkimo} by observing that
\begin{subequations}
\begin{align}
\left| {{E_{i,j}}\left( {\tilde J\,\phi } \right)} \right| &\le \frac{{{2^{ - {N_x} - 2}}{\mkern 1mu} {l^{{N_x} + 1}}\,x_{l,{N_x},i}^{(\alpha )}{\mkern 1mu} \left( {l + x_{l,{N_x},i}^{(\alpha )}} \right){\mkern 1mu} M_\phi ^{(x)}}}{{({N_x} + 1)!{\mkern 1mu} \left| {K_{{N_x} + 1}^{(\alpha )}} \right|}}{\left\| {C_{l,{N_x} + 1}^{(\alpha )}} \right\|_{{L^\infty }[0,l]}},\\
\left| {{E_{i,j}}\left( {\tilde I_2^{(t)}\,\phi } \right)} \right| &\le \frac{{{2^{ - {N_t} - 2}}{\mkern 1mu} {\mkern 1mu} {\tau ^{{N_t} + 1}}\,{{\left( {t_{\tau ,{N_t},j}^{(\alpha )}} \right)}^2}{\mkern 1mu} M_\phi ^{(t)}}}{{({N_t} + 1)!\,{\mkern 1mu} \left| {K_{{N_t} + 1}^{(\alpha )}} \right|}}{\left\| {C_{\tau ,{N_t} + 1}^{(\alpha )}} \right\|_{{L^\infty }[0,\tau ]}},\\
\left| {{E_{i,j}}\left( {\tilde I_1^{(t)}\,\tilde J\,\phi } \right)} \right| &\le \frac{{{2^{ - {N_t} - 3}}{\mkern 1mu} {\mkern 1mu} {\tau ^{{N_t} + 1}}\,x_{l,{N_x},i}^{(\alpha )}{\mkern 1mu} t_{\tau ,{N_t},j}^{(\alpha )}{\mkern 1mu} \left( {l + x_{l,{N_x},i}^{(\alpha )}} \right){\mkern 1mu} {{\left\| {C_{\tau ,{N_t} + 1}^{(\alpha )}} \right\|}_{{L^\infty }[0,\tau ]}}}}{{({N_t} + 1)!\,{\mkern 1mu} \left| {K_{{N_t} + 1}^{(\alpha )}} \right|}}{\mkern 1mu} \left( {2{\mkern 1mu} M_\phi ^{(t)} + \frac{{{2^{ - {N_x}}}{\mkern 1mu} {l^{{N_x} + 1}}{\mkern 1mu} M_\phi ^{(x,t)}{\mkern 1mu} {{\left\| {C_{l,{N_x} + 1}^{(\alpha )}} \right\|}_{{L^\infty }[0,l]}}}}{{({N_x} + 1)!{\mkern 1mu} \left| {K_{{N_x} + 1}^{(\alpha )}} \right|}}} \right),\\
\left| {{E_{i,j}}\left( {\tilde I_2^{(t)}\,\tilde J\,\phi } \right)} \right| &\le \frac{{{2^{ - {N_t} - 4}}{\mkern 1mu} {\tau ^{{N_t} + 1}}{\mkern 1mu} x_{l,{N_x},i}^{(\alpha )}{\mkern 1mu} {{\left( {t_{\tau ,{N_t},j}^{(\alpha )}} \right)}^2}{\mkern 1mu} \left( {l + x_{l,{N_x},i}^{(\alpha )}} \right){\mkern 1mu} {{\left\| {C_{\tau ,{N_t} + 1}^{(\alpha )}} \right\|}_{{L^\infty }[0,\tau ]}}}}{{({N_t} + 1)!\,{\mkern 1mu} \left| {K_{{N_t} + 1}^{(\alpha )}} \right|}}{\mkern 1mu} \left( 2{\mkern 1mu} M_\phi ^{(t)} + \frac{{{2^{ - {N_x}}}{\mkern 1mu} {l^{{N_x} + 1}}{\mkern 1mu} M_\phi ^{(x,t)}{\mkern 1mu} {{\left\| {C_{l,{N_x} + 1}^{(\alpha )}} \right\|}_{{L^\infty }[0,l]}}}}{{({N_x} + 1)!{\mkern 1mu} \left| {K_{{N_x} + 1}^{(\alpha )}} \right|}} \right),\\
\left| {{E_{i,j}}\left( {\tilde I_1^{(t)}{\mkern 1mu} \psi } \right)} \right| &\le {\left( {\frac{\tau }{2}} \right)^{{M_t} + 1}}\frac{{t_{\tau ,{N_t},j}^{(\alpha )}\,M_\psi ^{(t)}}}{{({M_t} + 1)!\,\left| {K_{{M_t} + 1}^{(\alpha )}} \right|}}{\left\| {C_{\tau ,{M_t} + 1}^{(\alpha )}} \right\|_{{L^\infty }[0,\tau ]}},\\
\left| {{E_{i,j}}\left( {\tilde I_2^{(t)}\,\psi } \right)} \right| &\le \frac{{{2^{ - {M_t} - 2}}{\mkern 1mu} {\tau ^{{M_t} + 1}}{\mkern 1mu} {{\left( {t_{\tau ,{N_t},j}^{(\alpha )}} \right)}^2}{\mkern 1mu} M_\psi ^{(t)}}}{{({M_t} + 1)!\,{\mkern 1mu} \left| {K_{{M_t} + 1}^{(\alpha )}} \right|}}{\left\| {C_{\tau ,{M_t} + 1}^{(\alpha )}} \right\|_{{L^\infty }[0,\tau ]}};\\
\left| {{E_{i,j}}\left( {\tilde I_2^{(t)}\,f} \right)} \right| &\le \frac{{{2^{ - {M_t} - 2}}{\mkern 1mu} {\tau ^{{M_t} + 1}}{\mkern 1mu} {{\left( {t_{\tau ,{N_t},j}^{(\alpha )}} \right)}^2}{\mkern 1mu} M_f^{(t)}}}{{({M_t} + 1)!\,{\mkern 1mu} \left| {K_{{M_t} + 1}^{(\alpha )}} \right|}}{\left\| {C_{\tau ,{M_t} + 1}^{(\alpha )}} \right\|_{{L^\infty }[0,\tau ]}}.
\end{align}
\end{subequations}
The asymptotic inequalities \eqref{sec:conerr1:eq:empasym1} and \eqref{sec:conerr1:eq:empasym2} follow by taking the limits of Inequality \eqref{sec:conerr1:eq:trunc1} when $N_x, N_t, M_t \to \infty$, and using Lemmas \ref{subsec:errbounds:lem:max1} and \ref{sec1:lem:1}.
\end{proof}
The following corollary manifests that the asymptotic truncation error is minimized in the Chebyshev norm when applying the shifted Chebyshev basis polynomials.
\begin{cor}\label{cor:optcheb1}
Let $u(x,t) \in C^2\left(D_{l,\tau }^2\right)$, be the solution of Problem $\mathcal{P}$. Suppose also that $u$ is interpolated by the shifted Gegenbauer polynomials at the SGG nodes, $\left(x_{l,N_x,i}^{(\alpha)}, t_{\tau,N_t,j}^{(\alpha)}\right), i = 0, \ldots, N_x; j = 0, \ldots, N_t$, on the rectangular domain $D_{l,\tau }^2$. Then the shifted Chebyshev basis is optimal in the Chebyshev norm as $N_x, N_t; M_t \to \infty$.
\end{cor}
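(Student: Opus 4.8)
The plan is to read the claim off directly from the asymptotic truncation-error bounds \eqref{sec:conerr1:eq:empasym1}--\eqref{sec:conerr1:eq:empasym2} established in the preceding theorem, viewing their right-hand sides as functions of the Gegenbauer parameter $\alpha$ and proving that, in the limit $N_x,N_t,M_t\to\infty$, they attain their global minimum over $\alpha>-1/2$ precisely at $\alpha=0$, which is the shifted Chebyshev case. Because those bounds are assembled entirely from $L^\infty$-norms of the relevant derivatives and of the shifted Gegenbauer polynomials, ``optimal in the Chebyshev norm'' is exactly the statement that these $L^\infty$ bounds are minimized over $\alpha$.

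First I would treat the range $\alpha\ge 0$. Inside the bracket of \eqref{sec:conerr1:eq:empasym1} I would isolate the finitely many additive contributions that decay most slowly as $N_x,N_t,M_t\to\infty$; after multiplying by the common prefactor $(N_xN_tM_t)^{\alpha+1/2}$ and cancelling matching powers of $4^{\pm N}$ and $N^{\pm N}$, each surviving term collapses to a product of factors of the form $(e\,l/4)^{N_x}\,N_x^{-N_x+\alpha-3/2}$, $(e\,\tau/4)^{N_t}\,N_t^{-N_t+\alpha-3/2}$, and $(e\,\tau/4)^{M_t}\,M_t^{-M_t+\alpha-3/2}$. The only residual $\alpha$-dependence then resides in the polynomial factors $N_x^{\alpha}$, $N_t^{\alpha}$, $M_t^{\alpha}$, each of which, having base greater than one, is strictly increasing in $\alpha$. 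Hence the bound is a finite sum of positive terms each increasing on $[0,\infty)$, so it is minimized over $\alpha\ge0$ at the left endpoint $\alpha=0$.

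Next I would handle $-1/2<\alpha<0$. The structural difference is traceable to Lemma \ref{subsec:errbounds:lem:max1}: for $\alpha\ge0$ one has $\|C_{L,n}^{(\alpha)}\|_{L^\infty[0,L]}=1$, whereas for negative $\alpha$ the sup-norm behaves like $A^{(\alpha)}n^{-\alpha}$ with $A^{(\alpha)}>1$. Repeating the dominant-term extraction on \eqref{sec:conerr1:eq:empasym2}, the extra $n^{-\alpha}$ factors cancel the $N^{\alpha}$ growth found above exactly, so every leading term acquires the same super-exponential $N$-rate as its $\alpha=0$ counterpart but carries an additional constant multiple built from $A_1^{(\alpha)},A_2^{(\alpha)},A_3^{(\alpha)}>1$. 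Since these constants exceed one throughout $(-1/2,0)$ and tend to one as $\alpha\to0^{-}$, the bound for every $\alpha\in(-1/2,0)$ strictly exceeds its value at $\alpha=0$, while the two branches join continuously at $\alpha=0$. Combining the two ranges shows that the asymptotic $L^\infty$ truncation-error bound attains its global minimum over $\alpha>-1/2$ at $\alpha=0$, i.e.\ at the shifted Chebyshev basis, which is the assertion.

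I expect the main obstacle to be the bookkeeping in this dominant-term extraction: one must correctly identify which of the several additive contributions in the brackets of \eqref{sec:conerr1:eq:empasym1} and \eqref{sec:conerr1:eq:empasym2} survive in the limit, and then verify that, after cancellation of the super-exponential factors $N^{-N}$, the net $\alpha$-dependence is genuinely monotone increasing for $\alpha\ge0$ and is inflated by the factors $A_k^{(\alpha)}>1$ for $\alpha<0$. The entire comparison hinges on the dichotomy in Lemma \ref{subsec:errbounds:lem:max1}, so the delicate point is confirming that the $n^{-\alpha}$ decay of the Gegenbauer sup-norm for negative $\alpha$ cancels the $n^{\alpha}$ growth \emph{exactly}, leaving only the constant inflation $A_k^{(\alpha)}>1$ rather than any additional polynomial dependence on $N_x,N_t,M_t$.
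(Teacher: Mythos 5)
Your proposal is correct and follows essentially the same route as the paper: both read the claim directly off the asymptotic bounds \eqref{sec:conerr1:eq:empasym1}--\eqref{sec:conerr1:eq:empasym2}, using the residual $N_x^{\alpha},N_t^{\alpha},M_t^{\alpha}$ growth to rule out $\alpha>0$ and the inflation constants $A_k^{(\alpha)}>1$ (whose presence stems from the sup-norm dichotomy in Lemma \ref{subsec:errbounds:lem:max1}, with the $n^{-\alpha}$ decay cancelling the $n^{\alpha}$ growth exactly, as you note) to rule out $-\tfrac{1}{2}<\alpha<0$. The only difference is organizational: the paper chains the comparisons as $\left|E_T^+\right|>\left|E_T^-\right|>\left|E_T^0\right|$ and exhibits the positivity of $\left|E_T^-\right|-\left|E_T^0\right|$ explicitly, whereas you compare each $\alpha$-range against $\alpha=0$ separately; the substance is identical.
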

\begin{proof}
If we denote the asymptotic truncation error bounds for positive and negative $\alpha$-values by ${{E_T^+}({\phi _{i,j}})}$ and ${{E_T^-}({\phi _{i,j}})}$, respectively, then we find by comparing the asymptotic formulae \eqref{sec:conerr1:eq:empasym1} and \eqref{sec:conerr1:eq:empasym2} that
\begin{equation}
	\left|{{E_T^+}({\phi _{i,j}})}\right| > \left|{{E_T^-}({\phi _{i,j}})}\right|\, \forall i, j.
\end{equation}
That is, the asymptotic truncation error bound is bigger for discretizations at the SGG nodes $\left(x_{l,N_x,i}^{(\alpha)}, t_{\tau,N_t,j}^{(\alpha)}\right), i = 0, \ldots, N_x$; $j = 0, \ldots, N_t$, with positive $\alpha$-values, and the gap grows wider for larger positive $\alpha$-values. However, at $\alpha = 0$, we find that the asymptotic truncation error bound, ${{E_T^0}({\phi _{i,j}})}$, is smaller in magnitude than $\left|{{E_T^-}({\phi _{i,j}})}\right|$, since
\begin{align}
\left| {E_T^ - ({\phi _{i,j}})} \right| - \left| {E_T^0({\phi _{i,j}})} \right| &= {c_1}\,{4^{ - {N_x} - {N_t} - {M_t}}}\,N_x^{ - {N_x} - 2}\,N_t^{ - {N_t} - 2}\,M_t^{ - {M_t} - \frac{3}{2}}\,\left({4^{{N_t}}}\,\sqrt {{N_x}} \,N_t^{{N_t} + 2}\left(\left( {A_2^{(\alpha )} - 1} \right)\,{c_5}\,{4^{{M_t}}}\,{e^{{N_x}}}\,{l^{{N_x}}}\,M_t^{{M_t} + \frac{3}{2}}\right.\right.\nonumber\\
&\left.\left. + \left( {A_3^{(\alpha )} - 1} \right)\,{c_6}\,{4^{{N_x}}}\,{e^{{M_t}}}\,{\tau ^{{M_t}}}\,N_x^{{N_x} + \frac{3}{2}}\right) + {c_2}\,{4^{{M_t}}}\,{e^{{N_t}}}\,{\tau ^{{N_t}}}\,M_t^{{M_t} + \frac{3}{2}}\left({c_3}\,{e^{{N_x}}}\,{l^{{N_x}}}\,\sqrt {{N_x}\,{N_t}} \right.\right.\nonumber\\
&\left.\left.\left( {A_1^{(\alpha )}\,A_2^{(\alpha )} - 1} \right) + \left( {A_1^{(\alpha )} - 1} \right)\,{c_4}\,{4^{{N_x}}}\,\sqrt {{N_t}} \,N_x^{{N_x} + 2}\right)\right) > 0\,\forall i,j.
\end{align}
Hence the shifted Chebyshev basis is optimal in the Chebyshev norm as $N_x, N_t; M_t \to \infty$.
\end{proof}
Notice that Corollary \ref{cor:optcheb1} is only valid for large numbers of expansion terms; however, the shifted Chebyshev basis is not necessarily optimal for small/medium range of expansion terms, where other members of the shifted Gegenbauer family of polynomials could exhibit faster convergence rates as recently shown by \citet{Elgindy2012d} and \citet{Elgindy2013a}. The following corollary highlights the convergence order of the SGPM.
\begin{cor}\label{cor:infiniteorder1}
Let $u(x,t) \in C^2\left(D_{l,\tau }^2\right)$, be the solution of Problem $\mathcal{P}$. Suppose also that $u$ is interpolated by the shifted Gegenbauer polynomials at the SGG nodes, $\left(x_{l,N_x,i}^{(\alpha)}, t_{\tau,N_t,j}^{(\alpha)}\right), i = 0, \ldots, N_x; j = 0, \ldots, N_t$, on the rectangular domain $D_{l,\tau }^2$. Then the total truncation error of the SGPM is of
\begin{subequations}
\begin{empheq}[left={}\empheqlbrace]{align}
	&O\left( {{{\left( {\frac{{e\,{l_{\max }}}}{4}} \right)}^{2\,{N_{{\text{max}}}}}}N_{\max }^{2\alpha }N_{\min }^{ - {N_{\min }} - 3/2}} \right),\quad \alpha  \ge 0,\\
	&O\left( {{{\left( {\frac{{e\,{l_{\max }}}}{4}} \right)}^{2\,{N_{{\text{max}}}}}}N_{\min }^{ - {N_{\min }} - 3/2}} \right),\quad \alpha  < 0,
	\end{empheq}
	\end{subequations}
	as $N_x, N_t, M_t \to \infty$, where ${l_{\max }} = \max \{ l,\tau \} ,\;{N_{\max }} = \max \{ {N_x},{N_t},{M_t}\} ;{N_{\min }} = \min \{ {N_x},{N_t},{M_t}\}$.	
\end{cor}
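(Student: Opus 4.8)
The plan is to read the two claimed orders directly off the asymptotic truncation-error bounds \eqref{sec:conerr1:eq:empasym1} and \eqref{sec:conerr1:eq:empasym2} of the preceding theorem, which already isolate the dominant $N_x,N_t,M_t$-dependence of $|E_T(\phi_{i,j})|$. The corollary is then just a uniform repackaging of those two bounds: I would replace each index by $N_{\max}=\max\{N_x,N_t,M_t\}$ in every growing factor and by $N_{\min}=\min\{N_x,N_t,M_t\}$ in the single fastest-decaying factorial factor, and likewise bound $l$ and $\tau$ by $l_{\max}=\max\{l,\tau\}$.

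First I would expand the right-hand sides of \eqref{sec:conerr1:eq:empasym1}--\eqref{sec:conerr1:eq:empasym2} term by term. Distributing the outer factor $(N_x N_t M_t)^{\alpha+1/2}$ over the bracketed sum and simplifying the exponentials against the factorial decay (using $4^{-N}e^{N}L^{N}N^{-N}=(eL/4)^N N^{-N}$ for $L\in\{l,\tau\}$, together with Lemma \ref{sec1:lem:1} for the factor $(n+1)!K_{n+1}^{(\alpha)}$ and Lemma \ref{subsec:errbounds:lem:max1} for $\|C_{L,n}^{(\alpha)}\|_{L^\infty}$), every additive contribution collapses to one of two shapes: a \emph{mixed} term carrying a super-exponentially decaying factor $N_x^{-N_x}N_t^{-N_t}$ in both the spatial and the temporal index, and three \emph{single} terms each decaying super-exponentially in only one of $N_x$, $N_t$, $M_t$. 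For $\alpha\ge0$ the surviving polynomial weight on each decaying variable is $N^{\alpha-3/2}$; for $-1/2<\alpha<0$ the extra factor $N^{-\alpha}$ coming from \eqref{subsec:errbounds:eq:linfnorm22} in Lemma \ref{subsec:errbounds:lem:max1} shifts this weight, which is precisely what cancels the $N_{\max}^{2\alpha}$ prefactor in the second case.

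Next I would majorize. Since the base $e\,l_{\max}/4$ and the indices are eventually at least $1$, I would use $N_x+N_t\le 2N_{\max}$ to bound the product of exponentials in the mixed term by $(e\,l_{\max}/4)^{2N_{\max}}$, the monotone decay of $N\mapsto N^{-N}$ to bound $N_x^{-N_x}N_t^{-N_t}\le N_{\min}^{-N_{\min}}$, and $N^{\alpha}\le N_{\max}^{\alpha}$ (for $\alpha\ge0$) to collect the polynomial weights into $N_{\max}^{2\alpha}N_{\min}^{-3/2}$. The key point is that the mixed term is the binding one under this uniformization: each single term is dominated by the same envelope, since its lone exponential has exponent at most $2N_{\max}$, its factorial factor is no larger than $N_{\min}^{-N_{\min}}$ (with equality in the worst case, when its decay variable equals $N_{\min}$, and strictly faster decay when it equals $N_{\max}$), and its polynomial weight is absorbed into $N_{\max}^{2\alpha}N_{\min}^{-3/2}$. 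Assembling these estimates yields the stated orders, with the $M_t$-driven term handled in exactly the same manner.

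The routine part is the bookkeeping of exponents; the one genuine check is that the uniform substitution $N_x,N_t\mapsto N_{\max}$ in the numerators and $\mapsto N_{\min}$ in the factorial denominator really dominates \emph{every} term simultaneously --- in particular that the doubly-decaying mixed term, though pointwise the smallest, produces the correct enveloping order $(e\,l_{\max}/4)^{2N_{\max}}N_{\max}^{2\alpha}N_{\min}^{-N_{\min}-3/2}$, and that in the regime $-1/2<\alpha<0$ the $L^\infty$-growth of the shifted Gegenbauer polynomials cancels the $N_{\max}^{2\alpha}$ factor so that only $N_{\min}^{-N_{\min}-3/2}$ survives.
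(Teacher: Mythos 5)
Your proposal is correct and takes essentially the same route as the paper: the paper's entire proof is the one-line remark that the corollary ``follows directly'' from Inequalities \eqref{sec:conerr1:eq:empasym1} and \eqref{sec:conerr1:eq:empasym2}, and your term-by-term expansion (mixed term plus three singly-decaying terms, with weight $N^{\alpha-3/2}$ per decaying variable for $\alpha\ge 0$ and $N^{-3/2}$ for $-\tfrac12<\alpha<0$) followed by the uniformization $N_x,N_t,M_t\mapsto N_{\max}$ in growing factors, $\mapsto N_{\min}$ in the factorial decay, and $l,\tau\mapsto l_{\max}$ is exactly the bookkeeping that remark suppresses. One caveat: your justification that the base $e\,l_{\max}/4$ is ``eventually at least $1$'' is not literally true (it is a fixed constant, and the envelope $\left(e\,l_{\max}/4\right)^{2N_{\max}}$ dominates the singly-decaying terms only when $l_{\max}\ge 4/e$), but this assumption is implicit in the corollary's own statement, so it is an inherited limitation rather than a gap you introduced.
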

\begin{proof}
The proof follows directly from Inequalities \eqref{sec:conerr1:eq:empasym1} and \eqref{sec:conerr1:eq:empasym2}.
\end{proof}
Corollary \ref{cor:infiniteorder1} shows that the total truncation error of the SGPM decays faster than any finite power of $1/N_{\min}$, for $-1/2 < \alpha \le 0$, exhibiting an ``\textit{infinite order}'' or ``\textit{exponential}'' convergence.
\vspace{-6pt}
\section{Numerical Experiments}
\label{sec:numerical}
\vspace{-2pt}
In this section, we apply the proposed SGPM on four well-studied test examples with known exact solutions in the literature. Comparisons with other competitive numerical schemes are presented to assess the accuracy and efficiency of the SGPM. The numerical experiments were conducted on a personal laptop equipped with an Intel(R) Core(TM) i7-2670QM CPU with 2.20GHz speed running on a Windows 7 64-bit operating system. The resulting algebraic linear system of equations were solved using MATLAB's ``mldivide'' Algorithm provided with MATLAB V. R2013b (8.2.0.701). The optimal S-matrix was constructed using Algorithm 2.2 in \cite{Elgindy2013} with $M_{\text{max}} = 20$. The change of variable \eqref{sec:theshi:eq:change1} was applied using $\varepsilon = 3\, \text{eps}$, where $\text{eps} \approx 2.2204 \times 10^{-16}$, is the machine's floating-point relative accuracy. The solutions $\alpha_i^*$ of the one-dimensional optimization problems \eqref{subsec:opt:reducedprob1} were obtained using MATLAB's line search algorithm ``fminbnd'' with the termination tolerance ``TolX'' set at $\text{eps}/2$. All of the test examples were discretized at the shifted Chebyshev-Gauss nodes $\left(x_{l,N,i}^{(0)}, t_{\tau,N,j}^{(0)}\right) \in D_{l,\tau }^2, i, j = 0, \ldots, N$, defined by
\begin{equation}
\left( {x_{l,N,i}^{(0 )},t_{\tau ,{N_t},j}^{(0 )}} \right) = \left( {\frac{l}{2}\,\left( {1 - \cos \left( {{\mkern 1mu} \frac{{(2\,i - 1)\,\pi }}{{2\,n}}} \right)} \right),\frac{\tau }{2}\,\left( {1 - \cos \left( {{\mkern 1mu} \frac{{(2\,j - 1)\,\pi }}{{2\,n}}} \right)} \right)} \right),\quad i,j = 1, \ldots ,N.
\end{equation}
In all of the numerical experiments, we report the $l^1-, l^2-, l^{\infty}-$norms of the absolute error matrix,
\[\mathcal{E} = \left(\left|u\left(x_{l,N,i}^{(0)}, t_{\tau,N,j}^{(0)}\right) - P_{N,N}u\left(x_{l,N,i}^{(0)}, t_{\tau,N,j}^{(0)}\right)\right|\right),\]
defined by
\begin{align*}
{\left\| {\mathcal{E}} \right\|_1} &= \mathop {{\text{max}}}\limits_j \sum\limits_{i = 1}^{N} {\left| {u\left(x_{l,N,i}^{(0)}, t_{\tau,N,j}^{(0)}\right) - {P_{{N},{N}}}u\left(x_{l,N,i}^{(0)}, t_{\tau,N,j}^{(0)}\right)} \right|} ,\\
{\left\| {\mathcal{E}} \right\|_2} &= \sqrt {{\lambda_{\text{max}}}\left({{\mathcal{E}}^T}\,{\mathcal{E}}\right)} ,\\
{\left\| {\mathcal{E}} \right\|_\infty } &= \mathop {{\text{max}}}\limits_i \sum\limits_{j = 1}^{N} {\left| {u\left(x_{l,N,i}^{(0)}, t_{\tau,N,j}^{(0)}\right) - {P_{{N},{N}}}u\left(x_{l,N,i}^{(0)}, t_{\tau,N,j}^{(0)}\right)} \right|},
\end{align*}
where ${\mathcal{E}}^T$ denotes the transpose of ${\mathcal{E}}$ and ${\lambda_{\text{max}}}\left({{\mathcal{E}}^T}\,{\mathcal{E}}\right)$, is the maximum eigenvalue of ${{\mathcal{E}}^T}\,{\mathcal{E}}$. We shall also give the $L^{\infty}$-norm of the absolute error, $\left|u(x, t) - P_{N,N}u(x, t)\right|$, defined by
\[E_{N,N}^\infty  = {\left\| {u(x,t) - {P_{N,N}}u(x,t)} \right\|_{{L^\infty }(D_{l,\tau }^2)}} = \mathop {{\text{max}}}\limits_{(x,t) \in D_{l,\tau }^2} \left| {u(x,t) - {P_{N,N}}u(x,t)} \right|,\]
and provide the average elapsed CPU time in $100$ runs (AECPUT) taken by the SGPM to calculate the approximate solutions. Moreover, in each test example, we plot the absolute error function,
\begin{equation}
	{E}_{N,N}(x,t) = \left| u(x,t) - {P_{N,N}}u(x,t) \right|, \quad (x,t) \in D_{l,\tau}^2,
\end{equation}
and sketch the exact solution against its approximation over the entire domain $D_{l,\tau}^2$.
\paragraph{\textbf{Example 1}} Consider Problem $\mathcal{P}$ with $l = \tau = \beta_1 = \beta_2 = 1, f(x,t) = x^2 + t - 1, g_1(x) = x^2, g_2(x) = 1, h_1(t) = t; h_2(t) = 1 + t$. The exact solution of the problem is $u(x,t) = x^2 + t$. The plots of the exact solution, its approximation, and the absolute error function on $D_{1,1}^2$ using $N = M_t = 4$, are shown in Figure \ref{sec:numerical:fig:ex1exapprox1}. The $l^1-, l^2-, l^{\infty}-$norms of the absolute error matrix ${\mathcal{E}}$, the $L^{\infty}$-norm of the absolute error, and the AECPUT are shown in Table \ref{sec:numerical:tab:norm1}. The plots and the numerical results demonstrate the power of the SGPM, showing fast computations with errors of very small magnitudes using relatively small number of expansion terms. Moreover, Table \ref{sec:numerical:tab:norm1} manifests the ability of the SGPM to achieve higher order approximations by increasing the number of expansion terms in the optimal S-quadrature (i.e. the value of $M_t$) while preserving the same size of the global collocation matrix $\mathbf{A}$; hence the dimension of the linear system \eqref{sec:theshi2:eq:phisol1}. A plot of the elapsed CPU time to compute the approximate solution, ${P_{{N},{N}}}u$, at the collocation points $\left(x_{l,N,i}^{(0)}, t_{\tau,N,j}^{(0)}\right), i, j = 0, \ldots, N$, versus the total number of unknowns $(L + 1)$ using various values of $N$ and $M_t$ is shown in Figure \ref{sec:numerical:fig:CPU_Time2}, where we observe an approximate time complexity of $O\left((L + 1)^{2}\right)$, as $L \to \infty$, for relatively small values of $M_t$.

\begin{figure}[ht]
\centering
\includegraphics[scale=0.9]{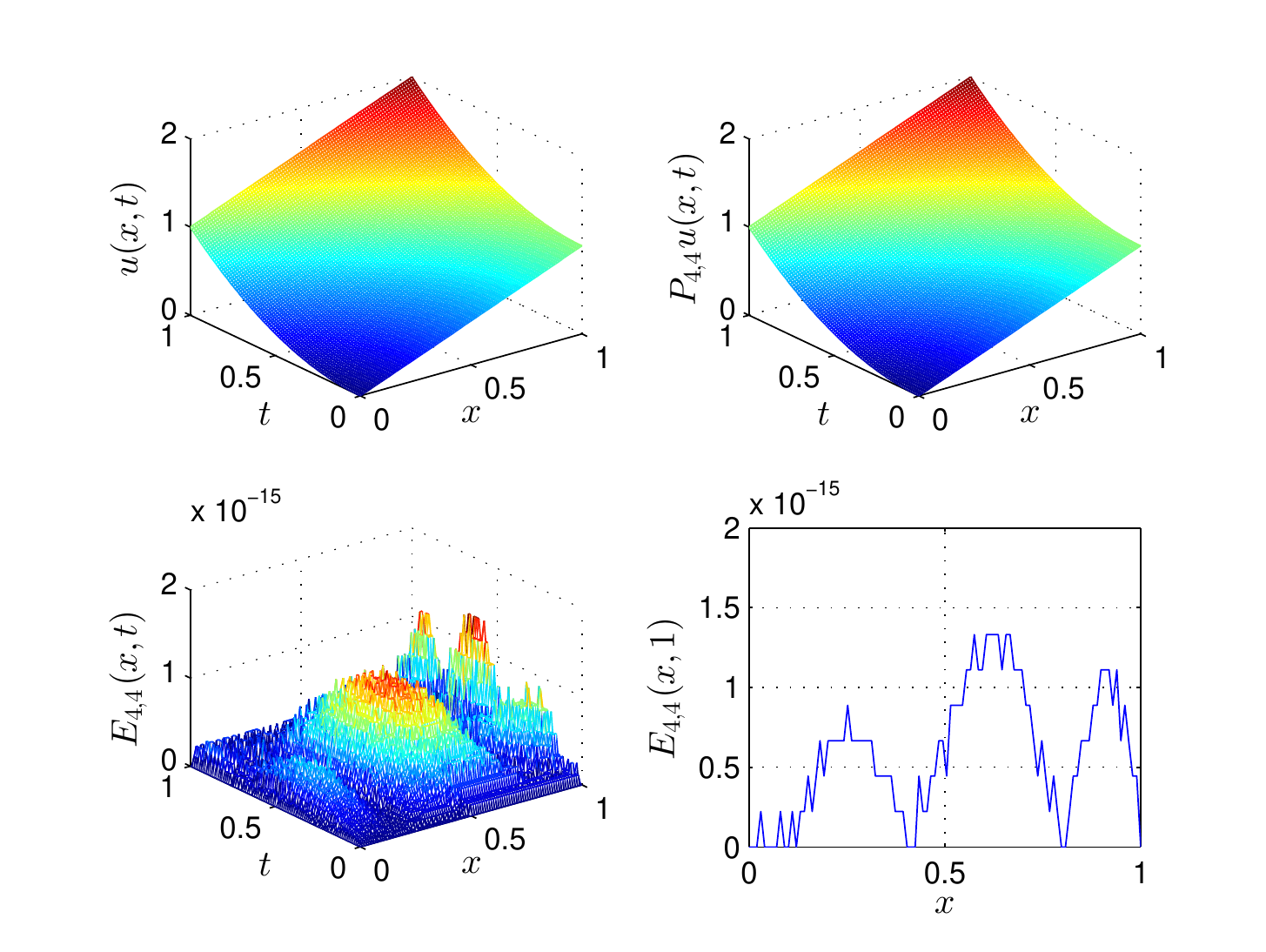}
\caption{The numerical simulation of the SGPM on Example 1. The figure shows the plots of the exact solution $u(x,t)$  on $D_{1,1}^2$ (upper left), its approximation $P_{4,4}u(x,t)$ (upper right), the absolute error function ${E}_{4,4}(x,t)$ (lower left), and its values at the final time, ${E}_{4,4}(x,1)$ (lower right). The optimal S-matrix was constructed using $M_t = 4$, and the plots were generated using $100$ linearly spaced nodes in the $x$- and $t$-directions from $0$ to $1$.}
\label{sec:numerical:fig:ex1exapprox1}
\end{figure}

\begin{table}[ht]
\begin{center} 
\scalebox{0.7}{
\resizebox{\textwidth}{!}{ %
\begin{tabular}{cccccc}
\toprule
\multicolumn{6}{c}{\textbf{Example 1}} \\
\cmidrule(r){1-6}
$N, M_t$ & $4, 4$ & $6, 6$ & $6, 9$ & $8, 8$ & $8, 12$ \\
\cmidrule(r){1-6}
${\left\| {\mathcal{E}} \right\|_1}$ & $1.665 \times 10^{-15}$ & $1.810 \times 10^{-14}$ & $4.552 \times 10^{-15}$ & $3.408 \times 10^{-14}$ & $1.554 \times 10^{-14}$ \\
 ${\left\| {\mathcal{E}} \right\|_2}$ & $1.127 \times 10^{-15}$ & $1.448 \times 10^{-14}$ & $2.947 \times 10^{-15}$ & $2.457 \times 10^{-14}$ & $1.095 \times 10^{-14}$ \\
${\left\| {\mathcal{E}} \right\|_\infty }$ & $1.665 \times 10^{-15}$ & $2.087 \times 10^{-14}$ & $4.663 \times 10^{-15}$ & $4.025 \times 10^{-14}$ & $1.649 \times 10^{-14}$ \\
$E_{N,N}^\infty$ & $1.332 \times 10^{-15}$ & $3.553 \times 10^{-15}$ & $2.887 \times 10^{-15}$ & $4.219 \times 10^{-15}$ & $4.663 \times 10^{-15}$ \\
AECPUT & $0.012$s & $0.028$s & $0.029$s & $0.279$s & $0.290$s \\
\bottomrule
\end{tabular}
}}
\caption{The $l^1-, l^2-, l^{\infty}-$norms of the absolute error matrix ${\mathcal{E}}$, the $L^{\infty}$-norm of the absolute error, and the AECPUT of Example 1.}
\label{sec:numerical:tab:norm1}
\end{center}
\end{table}
\begin{figure}[ht]
\centering
\includegraphics[scale=0.6]{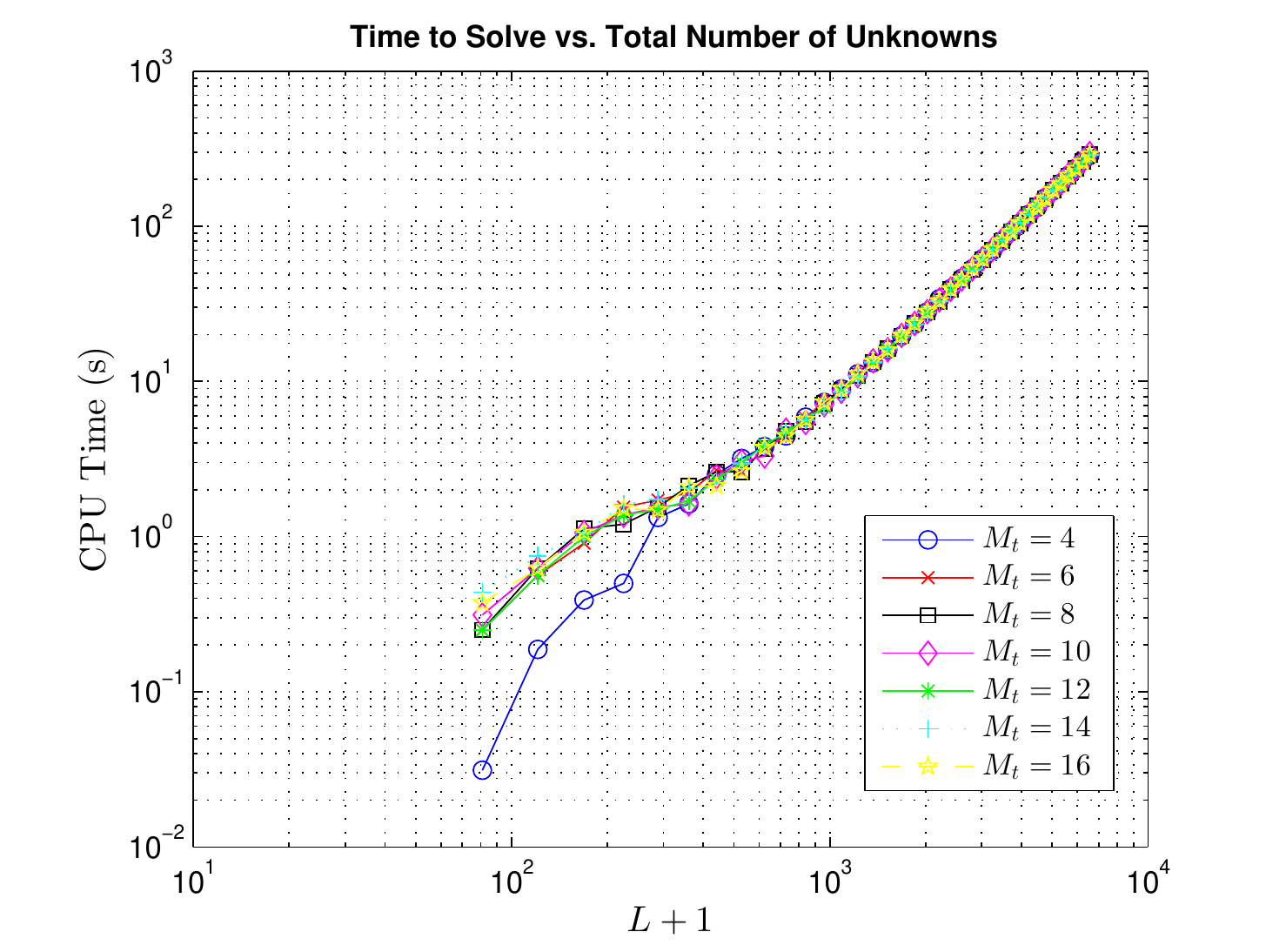}
\caption{Plot of the CPU time versus the total number of unknowns $(L + 1)$ in log-log scale using $N = 8(2)80$ (i.e. $80 \le L \le 6560$), and $M_t = 4(2)16$. The CPU time curves eventually approach a straight line with an average asymptotic slope of approximately $2$, for increasing values of $L$, indicating an approximate time complexity of $O\left((L + 1)^{2}\right)$, as $L \to \infty$, for relatively small values of $M_t$.}
\label{sec:numerical:fig:CPU_Time2}
\end{figure}

\paragraph{\textbf{Example 2}} Consider Problem $\mathcal{P}$ with $l = \tau = 1, \beta_1 = 10, \beta_2 = 24, g_1(x) = x^4\, (x-1)^4, g_2(x) = 2\,x^4\,(x-1)^4, h_1(t) = h_2(t) = 0; f(x,t) = 4\,{e^{2\,t}}\,{x^2}\,{\left( {x - 1} \right)^2}\,\left( {12\,{x^4} - 24\,{x^3} - 2\,{x^2} + 14\,x - 3} \right)$. The exact solution of the problem is $u(x,t) = {x^4}{\mkern 1mu} {(x - 1)^4}{\mkern 1mu} {e^{2{\mkern 1mu} t}}$. The plots of the exact solution, its approximation, and the absolute error function on $D_{1,1}^2$ using $N = M_t = 8$, are shown in Figure \ref{sec:numerical:fig:ex2exapprox2}. The $l^1-, l^2-, l^{\infty}-$norms of the absolute error matrix ${\mathcal{E}}$, the $L^{\infty}$-norm of the absolute error, and the AECPUT are shown in Table \ref{sec:numerical:tab:norm2}. A comparison between \citeauthor{Mohanty2005}'s finite difference method \cite{Mohanty2005}, \citeauthor{Ding2012}'s non-polynomial spline method \cite{Ding2012}, and the SGPM is also shown in Table \ref{sec:numerical:tab:norm22}. The plots and the numerical comparisons show the rapid convergence rates and the memory minimizing feature of the SGPM. For instance, \citeauthor{Ding2012}'s non-polynomial spline method \cite{Ding2012} requires the solution of a linear system of equations of order $119 \times 119$, versus $9 \times 9$, for the SGPM to establish the same order of accuracy.

\begin{figure}[ht]
\centering
\includegraphics[scale=0.9]{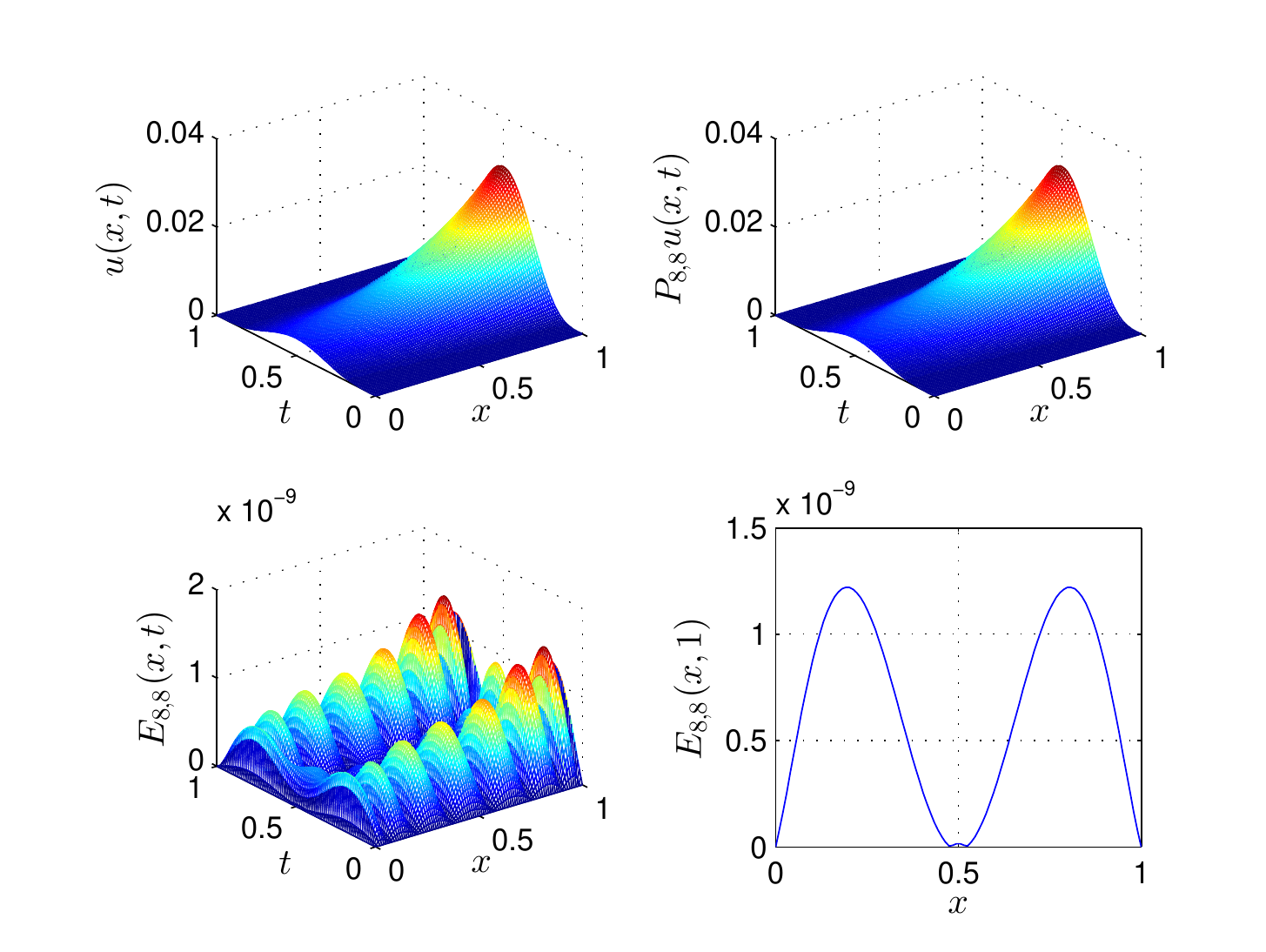}
\caption{The numerical simulation of the SGPM on Example 2. The figure shows the plots of the exact solution $u(x,t)$  on $D_{1,1}^2$ (upper left), its approximation $P_{8,8}u(x,t)$ (upper right), the absolute error function ${E}_{8,8}(x,t)$ (lower left), and its values at the final time, ${E}_{8,8}(x,1)$ (lower right). The optimal S-matrix was constructed using $M_t = 8$, and the plots were generated using $100$ linearly spaced nodes in the $x$- and $t$-directions from $0$ to $1$.}
\label{sec:numerical:fig:ex2exapprox2}
\end{figure}
\begin{table}[ht]
\begin{center} 
\scalebox{0.6}{
\resizebox{\textwidth}{!}{ %
\begin{tabular}{ccccc}
\toprule
\multicolumn{5}{c}{\textbf{Example 2}} \\
\cmidrule(r){1-5}
$N, M_t$ & $8, 8$ & $10,10$ & $12,12$ & $14,14$\\
\cmidrule(r){1-5}
${\left\| {\mathcal{E}} \right\|_1}$ & $ 5.418 \times 10^{-9}$ & $ 1.883 \times 10^{-11}$ & $ 8.119 \times 10^{-14}$ & $ 1.076 \times 10^{-14}$\\
${\left\| {\mathcal{E}} \right\|_2}$ & $ 3.759 \times 10^{-9}$ & $ 1.542 \times 10^{-11}$ & $ 6.271 \times 10^{-14}$ & $ 8.545 \times 10^{-15}$\\
${\left\| {\mathcal{E}} \right\|_\infty}$ & $ 5.089 \times 10^{-9}$ & $ 2.018 \times 10^{-11}$ & $ 8.952 \times 10^{-14}$ & $ 1.158 \times 10^{-14}$\\
$E_{N,N}^\infty$ & $ 1.420 \times 10^{-9}$ & $ 4.222 \times 10^{-12}$ & $ 1.331 \times 10^{-14}$ & $ 1.697 \times 10^{-15}$\\
AECPUT & $0.279$s & $0.553$s & $1.001$s & $1.257$s \\
\bottomrule
\end{tabular}
}}
\caption{The $l^1-, l^2-, l^{\infty}-$norms of the absolute error matrix ${\mathcal{E}}$, the $L^{\infty}$-norm of the absolute error, and the AECPUT of Example 2.}
\label{sec:numerical:tab:norm2}
\end{center}
\end{table}
\begin{table}[ht]
\begin{center} 
\scalebox{1}{
\resizebox{\textwidth}{!}{ %
\begin{tabular}{ccccccc}
\toprule
\multicolumn{7}{c}{\textbf{Example 2}} \\
\cmidrule(r){1-7}
$x$ & \textbf{\citeauthor{Mohanty2005}'s method \cite{Mohanty2005}} & \textbf{\citeauthor{Ding2012}'s method \cite{Ding2012}} & \multicolumn{4}{c}{\textbf{Present method}}\\
 & Difference scheme (11), $\eta = \tfrac{1}{24}, \gamma = \tfrac{1}{3}$ & $\lambda_1 = \tfrac{1}{12}; \lambda_2 = \tfrac{5}{6}$ & $N = M_t = 8$ & $N = M_t = 10$ & $N = M_t = 12$ & $N = M_t = 14$\\
\cmidrule(r){1-7}
$0.2$ & $ 4.956 \times 10^{-3}$ & $1.922 \times 10^{-9}$ & $1.220 \times 10^{-9}$ & $3.939 \times 10^{-12}$ & $1.266 \times 10^{-14}$ & $1.560 \times 10^{-15}
$\\
$0.4$ & $ 2.393\times 10^{-2}$ & $2.815 \times 10^{-10}$ & $2.740 \times 10^{-10}$ & $4.527 \times 10^{-13}$ & $4.819 \times 10^{-15}$ & $9.021 \times 10^{-16}$\\
$0.6$ & $ 2.393 \times 10^{-2}$ & $2.815 \times 10^{-10}$ & $2.740 \times 10^{-10}$ & $4.526 \times 10^{-13}$ & $4.774 \times 10^{-15}$ & $1.013 \times 10^{-15}$\\
$0.8$ & $ 4.956 \times 10^{-3}$ & $1.922 \times 10^{-9}$ & $1.220 \times 10^{-9}$ & $3.939 \times 10^{-12}$ & $1.252 \times 10^{-14}$ & $1.443 \times 10^{-15}$\\
\bottomrule
\end{tabular}
}}
\caption{A comparison of Example 2 between \citeauthor{Mohanty2005}'s finite difference method \cite{Mohanty2005}, \citeauthor{Ding2012}'s non-polynomial spline method \cite{Ding2012}, and the SGPM. The table lists the absolute errors at $x = 0.2, 0.4, 0.6; 0.8,$ and $t = 1$. The results of \citeauthor{Mohanty2005}'s method \cite{Mohanty2005} and \citeauthor{Ding2012}'s method \cite{Ding2012} are exactly as quoted from Ref. \cite{Ding2012}.}
\label{sec:numerical:tab:norm22}
\end{center}
\end{table}
\paragraph{\textbf{Example 3}} Consider Problem $\mathcal{P}$ with $l = \tau = 1, \beta_1 = 12, \beta_2 = 4, g_1(x) = \sin(x), g_2(x) = 0, h_1(t) = 0, h_2(t) = \sin(1)\,\cos(t); f(x,t) = 4\,(\cos (t) - 3\,\sin (t))\,\sin (x)$. The exact solution of the problem is $u(x,t) = \sin(x)\,\cos(t)$. The plots of the exact solution, its approximation, and the absolute error function on $D_{1,1}^2$ using $N = M_t = 4$, are shown in Figure \ref{sec:numerical:fig:ex3exapprox3}. The $l^1-, l^2-, l^{\infty}-$norms of the absolute error matrix ${\mathcal{E}}$, the $L^{\infty}$-norm of the absolute error, and the AECPUT are shown in Table \ref{sec:numerical:tab:norm3}. A comparison between \citeauthor{Dosti2012}'s quartic B-spline collocation method \cite{Dosti2012}, \citeauthor{Mittal2013}'s cubic B-spline collocation method \cite{Mittal2013}, and the SGPM is also shown in Table \ref{sec:numerical:tab:norm33}. Table \ref{sec:numerical:tab:norm3} shows one of the advantageous ingredients of the SGPM: ``\textit{the ability to achieve higher-order approximations while preserving the same dimension of the linear system \eqref{sec:theshi2:eq:phisol1}}''. Table \ref{sec:numerical:tab:norm33} shows the power of the presented scheme, which constructs higher-order approximations using as small as $5$ expansion terms in both spatial and temporal directions.

\begin{figure}[ht]
\centering
\includegraphics[scale=0.9]{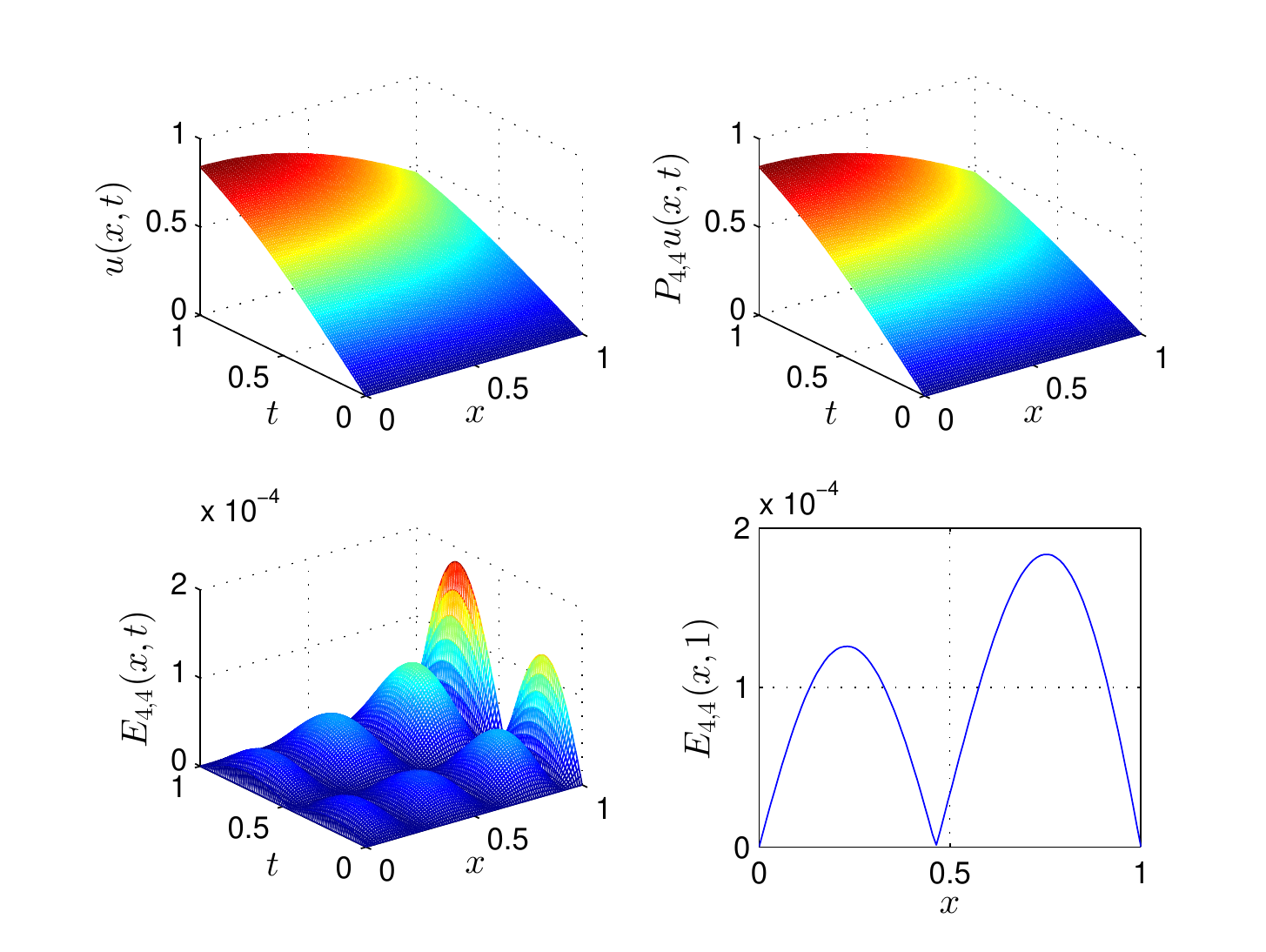}
\caption{The numerical simulation of the SGPM on Example 3. The figure shows the plots of the exact solution $u(x,t)$  on $D_{1,1}^2$ (upper left), its approximation $P_{4,4}u(x,t)$ (upper right), the absolute error function ${E}_{4,4}(x,t)$ (lower left), and its values at the final time, ${E}_{4,4}(x,1)$ (lower right). The optimal S-matrix was constructed using $M_t = 4$, and the plots were generated using $100$ linearly spaced nodes in the $x$- and $t$-directions from $0$ to $1$.}
\label{sec:numerical:fig:ex3exapprox3}
\end{figure}
\begin{table}[ht]
\begin{center} 
\scalebox{0.6}{
\resizebox{\textwidth}{!}{ %
\begin{tabular}{ccccc}
\toprule
\multicolumn{5}{c}{\textbf{Example 3}} \\
\cmidrule(r){1-5}
$N, M_t$ & $4, 4$ & $4,5$ & $4,6$ & $6,6$\\
\cmidrule(r){1-5}
${\left\| {\mathcal{E}} \right\|_1}$ & $ 2.384 \times 10^{-4}$ & $1.036 \times 10^{-4}$ & $9.391 \times 10^{-5}$ & $2.061 \times 10^{-7}$\\
${\left\| {\mathcal{E}} \right\|_2}$ & $ 1.762 \times 10^{-4}$ & $7.438 \times 10^{-5}$ & $6.855 \times 10^{-5}$ & $1.533 \times 10^{-7}$\\
${\left\| {\mathcal{E}} \right\|_\infty}$ & $ 2.460 \times 10^{-4}$ & $1.053 \times 10^{-4}$ & $9.731 \times 10^{-5}$ & $2.696 \times 10^{-7}$\\
$E_{N,N}^\infty$ & $ 1.834 \times 10^{-4}$ & $8.060 \times 10^{-5}$ & $7.348 \times 10^{-5}$ & $1.160 \times 10^{-7}$\\
AECPUT & $0.012$s & $0.012$s & $0.013$s & $0.027$s \\
\bottomrule
\end{tabular}
}}
\caption{The $l^1-, l^2-, l^{\infty}-$norms of the absolute error matrix ${\mathcal{E}}$, the $L^{\infty}$-norm of the absolute error, and the AECPUT of Example 3.}
\label{sec:numerical:tab:norm3}
\end{center}
\end{table}
\begin{table}[ht]
\begin{center} 
\scalebox{1}{
\resizebox{\textwidth}{!}{ %
\begin{tabular}{ccccccc}
\toprule
\multicolumn{7}{c}{\textbf{Example 3}} \\
\cmidrule(r){1-7}
$t$ & \textbf{\citeauthor{Dosti2012}'s method \cite{Dosti2012}} & \textbf{\citeauthor{Mittal2013}'s method \cite{Mittal2013}} & \multicolumn{4}{c}{\textbf{Present method}}\\
 & $h = 0.005, \Delta\,t = 0.001$ & $h = 0.005, \Delta\,t = 0.001$ & $N = M_t = 4$ & $N = 4, M_t = 5$ & $N = 4, M_t = 6$ & $N = M_t = 6$\\
\cmidrule(r){1-7}
$0.2$ & $2.425 \times 10^{-5}$ & $6.827 \times 10^{-5}$ & $1.458 \times 10^{-5}$ & $7.449 \times 10^{-6}$ & $7.114 \times 10^{-6}$ & $4.188 \times 10^{-8}
$\\
$0.4$ & $7.932 \times 10^{-5}$ & $1.494 \times 10^{-4}$ & $4.905 \times 10^{-5}$ & $2.030 \times 10^{-5}$ & $1.922 \times 10^{-5}$ & $3.340 \times 10^{-8}$\\
$0.6$ & $1.210 \times 10^{-4}$ & $2.241 \times 10^{-4}$ & $1.942 \times 10^{-6}$ & $1.127 \times 10^{-6}$ & $1.571 \times 10^{-6}$ & $9.030 \times 10^{-9}$\\
$0.8$ & $1.488 \times 10^{-4}$ & $2.898 \times 10^{-4}$ & $8.293 \times 10^{-5}$ & $3.369 \times 10^{-5}$ & $3.154 \times 10^{-5}$ & $2.172 \times 10^{-8}$\\
$1$ & $1.646 \times 10^{-4}$ & $3.439 \times 10^{-4}$ & $1.834 \times 10^{-4}$ & $8.060 \times 10^{-5}$ & $7.348 \times 10^{-5}$ & $1.160 \times 10^{-7}$\\
\bottomrule
\end{tabular}
}}
\caption{A comparison of Example 3 between \citeauthor{Dosti2012}'s  quartic B-spline collocation method \cite{Dosti2012}, \citeauthor{Mittal2013}'s cubic B-spline collocation method \cite{Mittal2013}, and the SGPM. The table lists the maximum absolute errors at $t = 0.2, 0.4, \ldots, 1$. The results of \citeauthor{Dosti2012}'s method \cite{Dosti2012} and \citeauthor{Mittal2013}'s method \cite{Mittal2013} are exactly as quoted from Ref. \cite{Mittal2013}.}
\label{sec:numerical:tab:norm33}
\end{center}
\end{table}
\paragraph{\textbf{Example 4}} Consider Problem $\mathcal{P}$ with $l = \tau = 1, \beta_1 = 20, \beta_2 = 25, g_1(x) = \sinh(x), g_2(x) = -2 \sinh(x), h_1(t) = 0, h_2(t) = e^{-2 t} \sinh(1); f(x,t) = -12 e^{-2 t} \sinh(x)$. The exact solution of the problem is $u(x,t) = e^{-2 t} \sinh(x)$. The plots of the exact solution, its approximation, and the absolute error function on $D_{1,1}^2$ using $N = M_t = 4$, are shown in Figure \ref{sec:numerical:fig:ex4exapprox4}. The $l^1-, l^2-, l^{\infty}-$norms of the absolute error matrix ${\mathcal{E}}$, the $L^{\infty}$-norm of the absolute error, and the AECPUT are shown in Table \ref{sec:numerical:tab:norm4}. A comparison between
\citeauthor{Mohanty2005}'s finite difference method \cite{Mohanty2005} and the \citeauthor{Pandit2015} combined Crank-Nicolson finite difference and Haar wavelets numerical scheme \cite{Pandit2015}, and the SGPM is also shown in Table \ref{sec:numerical:tab:norm44}, which shows the root mean square error (RMS) for the three numerical schemes.
Both tables show the fast execution times, the exponential convergence, and the cost economization features of the SGPM.

\begin{figure}[ht]
\centering
\includegraphics[scale=0.9]{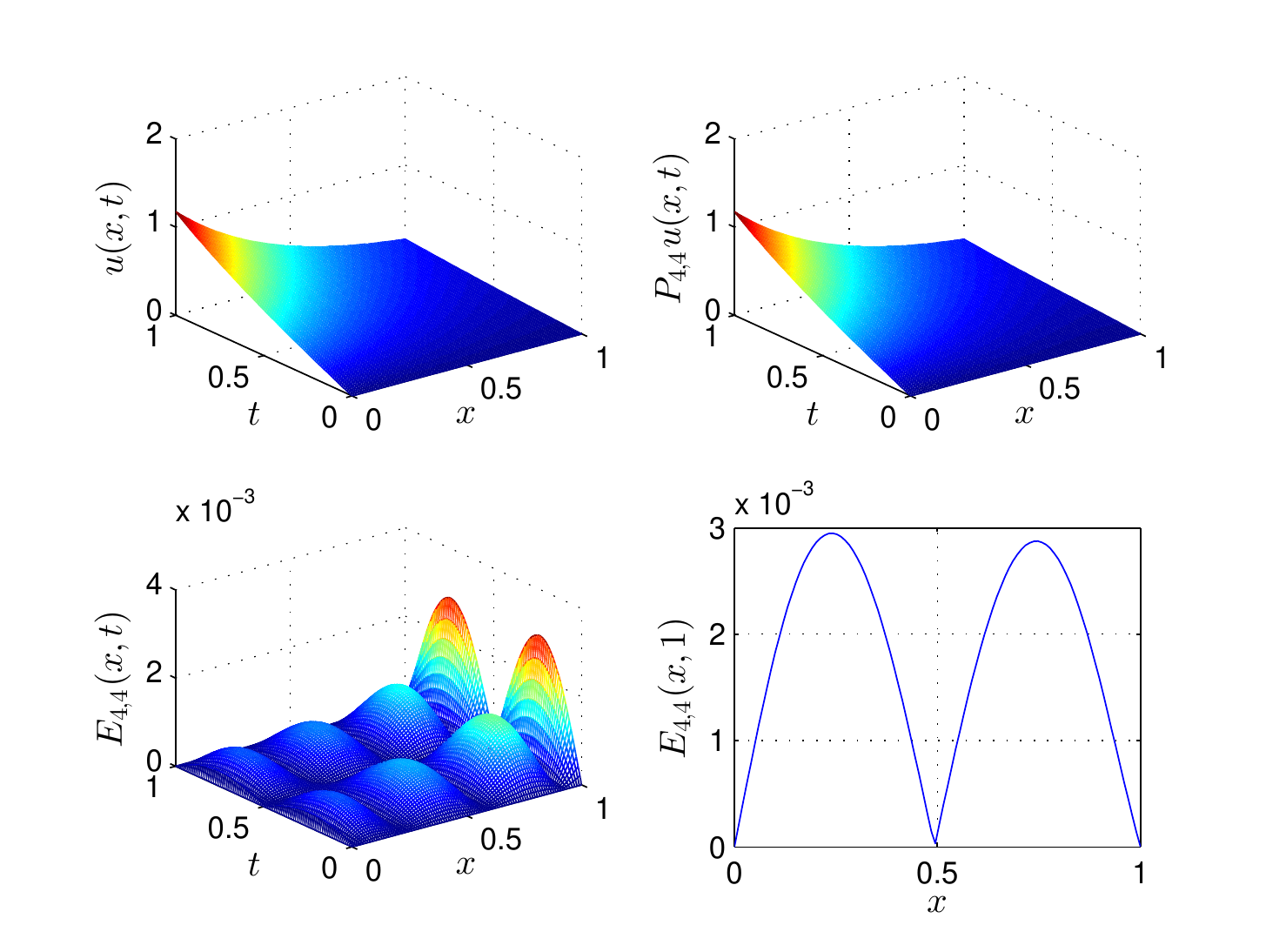}
\caption{The numerical simulation of the SGPM on Example 4. The figure shows the plots of the exact solution $u(x,t)$  on $D_{1,1}^2$ (upper left), its approximation $P_{4,4}u(x,t)$ (upper right), the absolute error function ${E}_{4,4}(x,t)$ (lower left), and its values at the final time, ${E}_{4,4}(x,1)$ (lower right). The optimal S-matrix was constructed using $M_t = 4$, and the plots were generated using $100$ linearly spaced nodes in the $x$- and $t$-directions from $0$ to $1$.}
\label{sec:numerical:fig:ex4exapprox4}
\end{figure}
\begin{table}[ht]
\begin{center} 
\scalebox{0.6}{
\resizebox{\textwidth}{!}{ %
\begin{tabular}{ccccc}
\toprule
\multicolumn{5}{c}{\textbf{Example 4}} \\
\cmidrule(r){1-5}
$N, M_t$ & $4, 4$ & $4,5$ & $4,6$ & $6,6$\\
\cmidrule(r){1-5}
${\left\| {\mathcal{E}} \right\|_1}$ & $ 4.027 \times 10^{-3}$ & $3.382 \times 10^{-3}$ & $2.807 \times 10^{-3}$ & $1.132 \times 10^{-5}$\\
${\left\| {\mathcal{E}} \right\|_2}$ & $ 3.178 \times 10^{-3}$ & $2.688 \times 10^{-3}$ & $2.218 \times 10^{-3}$ & $8.012 \times 10^{-6}$\\
${\left\| {\mathcal{E}} \right\|_\infty}$ & $ 4.053 \times 10^{-3}$ & $3.578 \times 10^{-3}$ & $2.933 \times 10^{-3}$ & $1.251 \times 10^{-5}$\\
$E_{N,N}^\infty$ & $ 1.834 \times 10^{-4}$ & $2.571 \times 10^{-3}$ & $2.087 \times 10^{-3}$ & $4.855 \times 10^{-6}$\\
AECPUT & $0.012$s & $0.013$s & $0.014$s & $0.029$s \\
\bottomrule
\end{tabular}
}}
\caption{The $l^1-, l^2-, l^{\infty}-$norms of the absolute error matrix ${\mathcal{E}}$, the $L^{\infty}$-norm of the absolute error, and the AECPUT of Example 4.}
\label{sec:numerical:tab:norm4}
\end{center}
\end{table}
\begin{table}[ht]
\begin{center} 
\scalebox{0.7}{
\resizebox{\textwidth}{!}{ %
\begin{tabular}{ccc}
\toprule
\multicolumn{3}{c}{\textbf{Example 4}} \\
\cmidrule(r){1-3}
\textbf{\citeauthor{Mohanty2005}'s method \cite{Mohanty2005}} & \textbf{The \citeauthor{Pandit2015} method \cite{Pandit2015}} & \textbf{Present method}\\
 $(2 M)/(\Delta\,t)/\text{RMS}$ & $(2 M)/(\Delta\,t)/\text{RMS}$ & $(N)/(M_t)/\text{RMS}$\\
\cmidrule(r){1-3}
$(16)/(1/16)/0.3603 \times 10^{-2}$ & $(16)/(1/16)/4.4971 \times 10^{-3}$ & $(4)/(4)/6.382 \times 10^{-4}$\\
$(32)/(1/32)/0.6834 \times 10^{-3}$ & $(32)/(1/32)/5.8771 \times 10^{-3}$ & $(4)/(5)/5.375 \times 10^{-4}$\\
$(64)/(1/64)/0.2420 \times 10^{-3}$ & $(64)/(1/64)/6.3473 \times 10^{-3}$ & $(4)/(6)/4.436 \times 10^{-4}$\\
\_ & \_ & $(6)/(6)/1.184 \times 10^{-6}$\\
\bottomrule
\end{tabular}
}}
\caption{A comparison of Example 4 between \citeauthor{Mohanty2005}'s finite difference method \cite{Mohanty2005} and the \citeauthor{Pandit2015} combined Crank-Nicolson finite difference and Haar wavelets numerical scheme \cite{Pandit2015}, and the SGPM. The first two columns of the table lists the results in the form $(2 M)/(\Delta\,t)/\text{RMS}$, and are exactly as quoted from Ref. \cite{Pandit2015}. The last column of the table lists the RMS errors of the SGPM associated with various values of $N$ and $M_t$.}
\label{sec:numerical:tab:norm44}
\end{center}
\end{table}
\vspace{-6pt}
\section{Future Work}
\label{sec:fw}
\vspace{-2pt}
The present SGPM assumes sufficient global smoothness of the solution, and generally uses single grids for discretization on the spatial and temporal domains. An interesting direction for future work could involve a study of composite shifted Gegenbauer grids and adaptivity to improve the convergence behavior of the numerical solver when dealing with nonsmooth problems. On the other hand, the numerical experiments conducted in Section \ref{sec:numerical} demonstrate the rapid convergence and stability of the SGPM; nonetheless, further stability analysis may be required to theoretically prove the stability of the SGPM on a wide variety of problems.

\vspace{-6pt}
\section{Conclusion}
\label{sec:conc}
\vspace{-2pt}
In this work, we developed a novel SGPM for the solution of the telegraph equation provided with some initial and boundary conditions. The method recasts the problem into its integral formulation to take advantage of the stability and well-conditioning of numerical integral operators. The discretization is carried out using some novel shifted Gegenbauer integration matrices and optimal shifted Gegenbauer integration matrices in the sense of solving the one-dimensional optimization problems \eqref{subsec:opt:reducedprob1}. We established Algorithm \ref{sec:theshi2:alg1matrix} for the efficient construction of the global collocation matrix and the right hand side of the resulting linear system, which together with a standard direct solver can produce very accurate approximations. The TCC of the developed algorithm scales like $O\left( {{N_x}{N_t}({N_x}{N_t} + {M_t})} \right),\text{ as } {N_x},{N_t},{M_t} \to \infty$. A numerical study on the time complexity required for the calculation of the approximate solution at the collocation points using a direct solver implementing Algorithm \ref{sec:theshi2:alg1matrix} shows that it scales like $O\left((L + 1)^{2}\right)$, as $L \to \infty$, for relatively small values of $M_t$, where $(L+1)$ is the total number of unknowns in the resulting linear system. Theorem \ref{sec:conerr1:thm:coeffconv1} and its corollaries demonstrate that the coefficients of the bivariate shifted Gegenbauer expansions decay faster for negative $\alpha$-values than for non-negative $\alpha$-values. In fact, we proved that the coefficients of the bivariate shifted Gegenbauer expansions are bounded for a non-positive Gegenbauer parameter, $\alpha \le 0$, as $n, m \to \infty$. Corollary \ref{cor:optcheb1} shows also that the asymptotic truncation error is minimized in the Chebyshev norm exactly when applying the shifted Chebyshev basis polynomials. Corollary \ref{cor:infiniteorder1} proves the exponential convergence exhibited by the SGPM. The extensive numerical results and comparisons demonstrate the fast execution, the exponential convergence, and the computational cost effectiveness of the proposed method. Moreover, the results show the ability of the numerical scheme to achieve higher-order approximations while preserving the same number of the solution expansion terms; thus the dimension of the resulting linear system of algebraic equations \eqref{sec:theshi2:eq:phisol1}. The method is memory minimizing, easily programmed, and can be efficiently applied and extended for the solution of various problems in many areas of science.

\vspace{-6pt}
\section{Acknowledgments}
\vspace{-2pt}
I would like to express my deepest gratitude to the editor for carefully handling the article, and the anonymous reviewers for their constructive comments and useful suggestions, which shaped the article into its final form.
\clearpage
\appendix
\section{A Computational Algorithm for the Constructions of the Global Collocation Matrix and the Right Hand Side of the Resulting Collocation Equations}
\label{appendix:comalg11}
\vspace{-2pt}
\begin{algorithm}[H]
\renewcommand{\thealgorithm}{A.1}
\caption{Matrix and Right Hand Side Constructions for Solving the Collocation Equations \eqref{sec:theshi2:eq:phisol1}}
\label{sec:theshi2:alg1matrix}
\begin{algorithmic}
\REQUIRE Positive integers $N_x, N_t$, and $M_t$; Positive real numbers $l$ and $\tau$; constants $\beta_1$ and $\beta_2$; SGG nodes $x_{l,N_x,i}^{(\alpha)}, i = 0, \ldots, N_x$; the S-matrices ${\mathbf{\hat P}}_l^{(2)}, {\mathbf{\hat P}}_{\tau}^{(1)}$, and ${\mathbf{\hat P}}_{\tau}^{(2)}$; the optimal S-matrices ${\mathbf{P}}_\tau^{(1)}$ and ${\mathbf{P}}_\tau^{(2)}$.\\
\vspace{2mm}
\STATE $L \leftarrow N_x + N_t + N_x N_t$
\FOR{$m=0$ \TO $L$}
\FOR{$n=0$ \TO $L$}
\STATE $A_{n,m} \leftarrow 0$
\ENDFOR
\ENDFOR
\FOR{$i=0$ \TO $N_x$}
  \STATE ${\theta _{l,x_{l,N_x,i}^{(\alpha)}}} \leftarrow x_{l,N_x,i}^{(\alpha)}/l$
    \FOR{$j=0$ \TO $N_t$}
      \STATE $n \leftarrow \text{index}(i,j)$
			\STATE $\mathbf{A}_{n,n} \leftarrow \left( {\hat p_{l,i,i}^{(2)} - {\theta _{l,x_{l,{N_x},i}^{(\alpha )}}}\,\hat p_{l,{N_x} + 1,i}^{(2)}} \right)\,\left( {{\beta _1}\,\hat p_{\tau ,j,j}^{(1)} + {\beta _2}\,\hat p_{\tau ,j,j}^{(2)}} + 1 \right) - \hat p_{\tau ,j,j}^{(2)}$
				\FOR{$k=0$ \TO $N_x$}
				\IF{$k \ne i$}
				  \STATE ${\mathbf{A}_{n,{\text{index}}\left( {k,j} \right)}} \leftarrow \left( {\hat p_{l,i,k}^{(2)} - {\theta _{l,x_{l,{N_x},i}^{(\alpha )}}}\,\hat p_{l,{N_x} + 1,k}^{(2)}} \right)\,\left( {{\beta _1}\,\hat p_{\tau ,j,j}^{(1)} + {\beta _2}\,\hat p_{\tau ,j,j}^{(2)} + 1} \right)$
				\ENDIF
				\ENDFOR			
				\FOR{$k=0$ \TO $N_t$}
				  \IF{$k \ne j$}
            \STATE ${\mathbf{A}_{n,{\text{index}}\left( {i,k} \right)}} \leftarrow \left( {\hat p_{l,i,i}^{(2)} - {\theta _{l,x_{l,{N_x},i}^{(\alpha )}}}\,\hat p_{l,{N_x} + 1,i}^{(2)}} \right)\,\left( {{\beta _1}\,\hat p_{\tau ,j,k}^{(1)} + {\beta _2}\,\hat p_{\tau ,j,k}^{(2)}} \right) - \hat p_{\tau ,j,k}^{(2)}$
					\FOR{$s=0$ \TO $N_x$}
					  \IF{$s \ne i$}
						 \STATE ${\mathbf{A}_{n,{\text{index}}\left( {s,k} \right)}} \leftarrow \left( {\hat p_{l,i,s}^{(2)} - {\theta _{l,x_{l,{N_x},i}^{(\alpha )}}}\hat p_{l,{N_x} + 1,s}^{(2)}} \right)\,\left( {{\beta _1}\hat p_{\tau ,j,k}^{(1)} + {\beta _2}\hat p_{\tau ,j,k}^{(2)}} \right)$
						\ENDIF
					\ENDFOR
					\ENDIF
        \ENDFOR
			\STATE ${\left( {{\text{RHS}}} \right)_n} \leftarrow {{\hat \psi }_{i,j}} - \left( {{\beta _1}{\mkern 1mu} \sum\limits_{k = 0}^{{M_t}} {p_{\tau ,j,k}^{(1)}\,{\psi _{i,(j,k)      }}}  + {\beta _2}{\mkern 1mu} \sum\limits_{k = 0}^{{M_t}} {p_{\tau ,j,k}^{(2)}\,{\psi _{i,(j,k)}}} } \right){\mkern 1mu}  + \sum\limits_{k = 0}^{{M_t}} {p_{\tau ,j,k}^      {(2)}\,{f_{i,(j,k)}}}$
    \ENDFOR
\ENDFOR
\RETURN{$\mathbf{A}$; $\{(\text{RHS})_n\}_{n=0}^L$}
\end{algorithmic}
\end{algorithm}

\bibliographystyle{model1-num-names}

\begin{thebibliography}{0}
\expandafter\ifx\csname natexlab\endcsname\relax\def\natexlab#1{#1}\fi
\providecommand{\bibinfo}[2]{#2}
\ifx\xfnm\relax \def\xfnm[#1]{\unskip,\space#1}\fi

\end{thebibliography}


\begin{thebibliography}{29}
\expandafter\ifx\csname natexlab\endcsname\relax\def\natexlab#1{#1}\fi
\providecommand{\bibinfo}[2]{#2}
\ifx\xfnm\relax \def\xfnm[#1]{\unskip,\space#1}\fi
\bibitem[{Guddati and Tassoulas(1999)}]{Guddati1999}
\bibinfo{author}{M.~Guddati}, \bibinfo{author}{J.~Tassoulas},
\newblock \bibinfo{title}{Space-time finite elements for the analysis of
  transient wave propagation in unbounded layered media},
\newblock \bibinfo{journal}{International Journal of Solids and Structures}
  \bibinfo{volume}{36} (\bibinfo{year}{1999}) \bibinfo{pages}{4699--4723}.
\bibitem[{Kreiss et~al.(2002)Kreiss, Petersson, and Ystr{\"o}m}]{Kreiss2002}
\bibinfo{author}{H.-O. Kreiss}, \bibinfo{author}{N.~A. Petersson},
  \bibinfo{author}{J.~Ystr{\"o}m},
\newblock \bibinfo{title}{Difference approximations for the second order wave
  equation},
\newblock \bibinfo{journal}{SIAM Journal on Numerical Analysis}
  \bibinfo{volume}{40} (\bibinfo{year}{2002}) \bibinfo{pages}{1940--1967}.
\bibitem[{Ramos(2007)}]{Ramos2007}
\bibinfo{author}{J.~Ramos},
\newblock \bibinfo{title}{Numerical methods for nonlinear second-order
  hyperbolic partial differential equations. {I}. {T}ime-linearized finite
  difference methods for 1-{D} problems},
\newblock \bibinfo{journal}{Applied mathematics and computation}
  \bibinfo{volume}{190} (\bibinfo{year}{2007}) \bibinfo{pages}{722--756}.
\bibitem[{Mattsson et~al.(2009)Mattsson, Ham, and Iaccarino}]{Mattsson2009}
\bibinfo{author}{K.~Mattsson}, \bibinfo{author}{F.~Ham},
  \bibinfo{author}{G.~Iaccarino},
\newblock \bibinfo{title}{Stable boundary treatment for the wave equation on
  second-order form},
\newblock \bibinfo{journal}{Journal of Scientific Computing}
  \bibinfo{volume}{41} (\bibinfo{year}{2009}) \bibinfo{pages}{366--383}.
\bibitem[{Ashyralyev et~al.(2010)Ashyralyev, Koksal, and
  Agarwal}]{Ashyralyev2010}
\bibinfo{author}{A.~Ashyralyev}, \bibinfo{author}{M.~E. Koksal},
  \bibinfo{author}{R.~P. Agarwal},
\newblock \bibinfo{title}{A difference scheme for {C}auchy problem for the
  hyperbolic equation with self-adjoint operator},
\newblock \bibinfo{journal}{Mathematical and Computer Modelling}
  \bibinfo{volume}{52} (\bibinfo{year}{2010}) \bibinfo{pages}{409--424}.
\bibitem[{Pandit et~al.(2015)Pandit, Kumar, and Tiwari}]{Pandit2015}
\bibinfo{author}{S.~Pandit}, \bibinfo{author}{M.~Kumar},
  \bibinfo{author}{S.~Tiwari},
\newblock \bibinfo{title}{Numerical simulation of second-order hyperbolic
  telegraph type equations with variable coefficients},
\newblock \bibinfo{journal}{Computer Physics Communications}
  \bibinfo{volume}{187} (\bibinfo{year}{2015}) \bibinfo{pages}{83--90}.
\bibitem[{Aloy et~al.(2007)Aloy, Casab{\'a}n, Caudillo-Mata, and
  J{\'o}dar}]{Aloy2007}
\bibinfo{author}{R.~Aloy}, \bibinfo{author}{M.~C. Casab{\'a}n},
  \bibinfo{author}{L.~Caudillo-Mata}, \bibinfo{author}{L.~J{\'o}dar},
\newblock \bibinfo{title}{Computing the variable coefficient telegraph equation
  using a discrete eigenfunctions method},
\newblock \bibinfo{journal}{Computers \& Mathematics with Applications}
  \bibinfo{volume}{54} (\bibinfo{year}{2007}) \bibinfo{pages}{448--458}.
\bibitem[{Sari et~al.(2014)Sari, Gunay, and Gurarslan}]{Sari2014}
\bibinfo{author}{M.~Sari}, \bibinfo{author}{A.~Gunay},
  \bibinfo{author}{G.~Gurarslan},
\newblock \bibinfo{title}{A solution to the telegraph equation by using {DGJ}
  method},
\newblock \bibinfo{journal}{International Journal of Nonlinear Science}
  \bibinfo{volume}{17} (\bibinfo{year}{2014}) \bibinfo{pages}{57--66}.
\bibitem[{Abdusalam(2004)}]{Abdusalam2004}
\bibinfo{author}{H.~Abdusalam},
\newblock \bibinfo{title}{Analytic and approximate solutions for {N}agumo
  telegraph reaction diffusion equation},
\newblock \bibinfo{journal}{Applied mathematics and computation}
  \bibinfo{volume}{157} (\bibinfo{year}{2004}) \bibinfo{pages}{515--522}.
\bibitem[{Ahmed et~al.(2001)Ahmed, Abdusalam, and Fahmy}]{Ahmed2001}
\bibinfo{author}{E.~Ahmed}, \bibinfo{author}{H.~Abdusalam},
  \bibinfo{author}{E.~Fahmy},
\newblock \bibinfo{title}{On telegraph reaction diffusion and coupled map
  lattice in some biological systems},
\newblock \bibinfo{journal}{International Journal of modern physics C}
  \bibinfo{volume}{12} (\bibinfo{year}{2001}) \bibinfo{pages}{717--726}.
\bibitem[{Bales(1984)}]{Bales1984}
\bibinfo{author}{L.~A. Bales},
\newblock \bibinfo{title}{Semidiscrete and single step fully discrete
  approximations for second order hyperbolic equations with time-dependent
  coefficients},
\newblock \bibinfo{journal}{Mathematics of computation} \bibinfo{volume}{43}
  (\bibinfo{year}{1984}) \bibinfo{pages}{383--414}.
\bibitem[{Bales(1986)}]{Bales1986}
\bibinfo{author}{L.~A. Bales},
\newblock \bibinfo{title}{Higher-order single-step fully discrete
  approximations for nonlinear second-order hyperbolic equations},
\newblock \bibinfo{journal}{Computers \& Mathematics with Applications}
  \bibinfo{volume}{12} (\bibinfo{year}{1986}) \bibinfo{pages}{581--604}.
\bibitem[{Day(1966)}]{Day1966}
\bibinfo{author}{J.~Day},
\newblock \bibinfo{title}{A {R}unge-{K}utta method for the numerical solution
  of the {G}oursat problem in hyperbolic partial differential equations},
\newblock \bibinfo{journal}{The Computer Journal} \bibinfo{volume}{9}
  (\bibinfo{year}{1966}) \bibinfo{pages}{81--83}.
\bibitem[{Jovanovi{\'c} et~al.(1987)Jovanovi{\'c}, Ivanovi{\'c}, and
  S{\"u}li}]{Jovanovic1987}
\bibinfo{author}{B.~S. Jovanovi{\'c}}, \bibinfo{author}{L.~D. Ivanovi{\'c}},
  \bibinfo{author}{E.~E. S{\"u}li},
\newblock \bibinfo{title}{Convergence of a finite-difference scheme for
  second-order hyperbolic equations with variable coefficients},
\newblock \bibinfo{journal}{IMA journal of numerical analysis}
  \bibinfo{volume}{7} (\bibinfo{year}{1987}) \bibinfo{pages}{39--45}.
\bibitem[{Mohanty(2005)}]{Mohanty2005}
\bibinfo{author}{R.~Mohanty},
\newblock \bibinfo{title}{An unconditionally stable finite difference formula
  for a linear second order one space dimensional hyperbolic equation with
  variable coefficients},
\newblock \bibinfo{journal}{Applied Mathematics and Computation}
  \bibinfo{volume}{165} (\bibinfo{year}{2005}) \bibinfo{pages}{229--236}.
\bibitem[{Yousefi(2010)}]{Yousefi2010}
\bibinfo{author}{S.~Yousefi},
\newblock \bibinfo{title}{Legendre multiwavelet {G}alerkin method for solving
  the hyperbolic telegraph equation},
\newblock \bibinfo{journal}{Numerical Methods for Partial Differential
  Equations} \bibinfo{volume}{26} (\bibinfo{year}{2010})
  \bibinfo{pages}{535--543}.
\bibitem[{Renard(2010)}]{Renard2010}
\bibinfo{author}{Y.~Renard},
\newblock \bibinfo{title}{The singular dynamic method for constrained second
  order hyperbolic equations: {A}pplication to dynamic contact problems},
\newblock \bibinfo{journal}{Journal of computational and applied mathematics}
  \bibinfo{volume}{234} (\bibinfo{year}{2010}) \bibinfo{pages}{906--923}.
\bibitem[{Lakestani and Saray(2010)}]{Lakestani2010}
\bibinfo{author}{M.~Lakestani}, \bibinfo{author}{B.~N. Saray},
\newblock \bibinfo{title}{Numerical solution of telegraph equation using
  interpolating scaling functions},
\newblock \bibinfo{journal}{Computers \& Mathematics with Applications}
  \bibinfo{volume}{60} (\bibinfo{year}{2010}) \bibinfo{pages}{1964--1972}.
\bibitem[{Dosti and Nazemi(2012)}]{Dosti2012}
\bibinfo{author}{M.~Dosti}, \bibinfo{author}{A.~Nazemi},
\newblock \bibinfo{title}{Quartic {B}-spline collocation method for solving
  one-dimensional hyperbolic telegraph equation},
\newblock \bibinfo{journal}{Journal of Information and Computing Science}
  \bibinfo{volume}{7} (\bibinfo{year}{2012}) \bibinfo{pages}{083--090}.
\bibitem[{Mittal and Bhatia(2013)}]{Mittal2013}
\bibinfo{author}{R.~Mittal}, \bibinfo{author}{R.~Bhatia},
\newblock \bibinfo{title}{Numerical solution of second order one dimensional
  hyperbolic telegraph equation by cubic {B}-spline collocation method},
\newblock \bibinfo{journal}{Applied Mathematics and Computation}
  \bibinfo{volume}{220} (\bibinfo{year}{2013}) \bibinfo{pages}{496--506}.
\bibitem[{Ding et~al.(2012)Ding, Zhang, Cao, and Tian}]{Ding2012}
\bibinfo{author}{H.~Ding}, \bibinfo{author}{Y.~Zhang},
  \bibinfo{author}{J.~Cao}, \bibinfo{author}{J.~Tian},
\newblock \bibinfo{title}{A class of difference scheme for solving telegraph
  equation by new non-polynomial spline methods},
\newblock \bibinfo{journal}{Applied Mathematics and Computation}
  \bibinfo{volume}{218} (\bibinfo{year}{2012}) \bibinfo{pages}{4671--4683}.
\bibitem[{Srivastava et~al.(2013)Srivastava, Awasthi, Chaurasia, and
  Tamsir}]{Srivastava2013}
\bibinfo{author}{V.~K. Srivastava}, \bibinfo{author}{M.~K. Awasthi},
  \bibinfo{author}{R.~Chaurasia}, \bibinfo{author}{M.~Tamsir},
\newblock \bibinfo{title}{The telegraph equation and its solution by reduced
  differential transform method},
\newblock \bibinfo{journal}{Modelling and Simulation in Engineering}
  \bibinfo{volume}{2013} (\bibinfo{year}{2013}).
\bibitem[{Elgindy and Smith-Miles(2013)}]{Elgindy2013}
\bibinfo{author}{K.~T. Elgindy}, \bibinfo{author}{K.~A. Smith-Miles},
\newblock \bibinfo{title}{Optimal {G}egenbauer quadrature over arbitrary
  integration nodes},
\newblock \bibinfo{journal}{Journal of Computational and Applied Mathematics}
  \bibinfo{volume}{242} (\bibinfo{year}{2013}) \bibinfo{pages}{82 -- 106}.
\bibitem[{Elgindy and {Smith-Miles}(2013)}]{Elgindy2013a}
\bibinfo{author}{K.~T. Elgindy}, \bibinfo{author}{K.~A. {Smith-Miles}},
\newblock \bibinfo{title}{Solving boundary value problems, integral, and
  {integro-differential} equations using {G}egenbauer integration matrices},
\newblock \bibinfo{journal}{Journal of Computational and Applied Mathematics}
  \bibinfo{volume}{237} (\bibinfo{year}{2013}) \bibinfo{pages}{307--325}.
\bibitem[{Szeg\"{o}(1975)}]{Szego1975}
\bibinfo{author}{G.~Szeg\"{o}}, \bibinfo{title}{{O}rthogonal {P}olynomials},
  volume~\bibinfo{volume}{23}, \bibinfo{publisher}{Am. Math. Soc. Colloq.
  Pub.}, \bibinfo{year}{1975}.
\bibitem[{Hesthaven et~al.(2007)Hesthaven, Gottlieb, and
  Gottlieb}]{Hesthaven2007}
\bibinfo{author}{J.~S. Hesthaven}, \bibinfo{author}{S.~Gottlieb},
  \bibinfo{author}{D.~Gottlieb}, \bibinfo{title}{{S}pectral {M}ethods for
  {Time-Dependent} {P}roblems}, volume~\bibinfo{volume}{21} of
  \textit{\bibinfo{series}{Cambridge Monographs on Applied and Computational
  Mathematics}}, \bibinfo{publisher}{Cambridge University Press},
  \bibinfo{year}{2007}.
\bibitem[{Alzer(2009)}]{Horst2009}
\bibinfo{author}{H.~Alzer},
\newblock \bibinfo{title}{Sharp upper and lower bounds for the gamma function},
\newblock \bibinfo{journal}{Proceedings of the Royal Society of Edinburgh,
  Section: A Mathematics} \bibinfo{volume}{139} (\bibinfo{year}{2009})
  \bibinfo{pages}{709--718}.
\bibitem[{Canuto et~al.(1988)Canuto, Hussaini, Quarteroni, and
  Zang}]{Canuto1988}
\bibinfo{author}{C.~Canuto}, \bibinfo{author}{M.~Y. Hussaini},
  \bibinfo{author}{A.~Quarteroni}, \bibinfo{author}{T.~A. Zang},
  \bibinfo{title}{Spectral Methods in Fluid Dynamics}, Springer series in
  computational physics, \bibinfo{publisher}{Springer-Verlag},
  \bibinfo{year}{1988}.
\bibitem[{Elgindy and Smith-Miles(2013)}]{Elgindy2012d}
\bibinfo{author}{K.~T. Elgindy}, \bibinfo{author}{K.~A. Smith-Miles},
\newblock \bibinfo{title}{Fast, accurate, and small-scale direct trajectory
  optimization using a {G}egenbauer transcription method},
\newblock \bibinfo{journal}{Journal of Computational and Applied Mathematics}
  \bibinfo{volume}{251} (\bibinfo{year}{2013}) \bibinfo{pages}{93--116}.
\end{thebibliography}

\end{document}